\definecolor{myred}{rgb}{0.2,0,0}
\definecolor{myblue}{rgb}{0,0,0.6}
\definecolor{mygreen}{rgb}{0,0.2,0}
\newcommand{\Q}{\mathbb{Q}}
\newcommand{\les}{\leqslant}
\newcommand{\ges}{\geqslant}
\newtheorem*{conjecture*}{Conjecture}
\newtheorem{theorem}{Theorem}
\newtheorem*{theorem*}{Theorem}
\newtheorem{lemma}{Lemma}
\newtheorem{remark}{Remark}
\DeclareMathOperator{\Sym}{Sym}
\DeclareMathOperator{\meas}{meas}
\begin{document}
	
\title{Variance estimates in Linnik's problem} 
\author{Andrei Shubin}
\address{Department of Mathematics, Caltech, 1200 E. California Blvd., Pasadena, CA, 91125, USA}
\email{\href{mailto:ashubin@caltech.edu}{ashubin@caltech.edu}}

\maketitle

\begin{abstract}
	We evaluate the variance of the number of lattice points in a small randomly rotated spherical ball on a surface of 3-dimensional sphere centered at the origin. Previously, Bourgain, Rudnick, and Sarnak showed conditionally on the Generalized Lindel{\"o}f Hypothesis that the variance is bounded from above by $\sigma(\Omega){N_n}^{1+\varepsilon}$, where $\sigma(\Omega)$ is the area of the ball $\Omega$ on the unit sphere, $N_n$ is the total number of solutions of Diophantine equation $x^2 + y^2 + z^2 = n$. Assuming the Grand Riemann Hypothesis and using the moments method of Soundararajan and Harper, we establish the upper bound of the form $c\sigma(\Omega) N_n$, where $c$ is an absolute constant. This bound is of the conjectured order of magnitude.
\end{abstract}

\section{Introduction}
\label{sec1}

The problem of placing points on the surface of the sphere in a uniform and well-separated way is of great interest in coding theory, tomography, and other areas (see~\cite{Sloane}). We consider the classical example of such a set, namely the integer solutions of the equation $x^2 + y^2 + z^2 = n$. Each solution can be represented by the point $(x, y, z)$ on the sphere with the center at the origin and radius $\sqrt{n}$. We know the upper bound for the number of solutions $N_n \ll n^{1/2 + o(1)}$ and, if $n \ne 0,4,7 \ \text{mod} \ 8$ (which is equivalent to the existence of \textit{primitive} lattice point: $\gcd(x_1, x_2, x_3) = 1$), the lower bound $N_n \gg n^{1/2 - o(1)}$. The conjecture of Linnik states that these points are equidistributed as $n \to +\infty$. Linnik proved it using his ergodic method assuming the Grand Riemann Hypothesis~\cite{Linnik_1968}. Unconditional proof was given by Duke \cite{Duke_1988, Duke_1990}, and Golubeva and Fomenko \cite{Golubeva_Fomenko} after a breakthrough of Iwaniec \cite{Iwaniec_1987}.

We denote by $\Omega_R (\mathbf{x})$ the spherical ball on the surface of the sphere with a center at $\mathbf{x}$. That is the set $\{ \mathbf{y} \in S^2 : \text{dist}(\mathbf{x}, \mathbf{y}) < R  \}$, where $\text{dist}(\mathbf{x}, \mathbf{y})$ is a spherical distance between the points $\mathbf{x}$ and $\mathbf{y}$, $R = R(n)$ is a spherical radius of the ball. We denote $\sigma(\Omega_R)$ the area of the ball normalized so that $\sigma(S^2) = 4\pi$, and by $Z = Z(n; \Omega_R (\mathbf{x}))$ the number of lattice points inside $\Omega_R (\mathbf{x})$. The variance over all the balls is
$$
V \bigl(n; \Omega_R (\mathbf{x})\bigr) := \int_{SO(3)} \biggl| Z(n, g\Omega_R) - \frac{\sigma(\Omega_R)}{\sigma(S^2)} N_n \biggr|^2  d\mu,
$$ where $ d\mu$ is the Haar probability measure on the rotations of the sphere $SO(3)$. The variance does not depend on $\mathbf{x}$.

This object was considered by Bourgain, Rudnick, and Sarnak in \cite{BRS, BRS_first} along with a few other spatial statistics such as electrostatic energy, Ripley's function, nearest neighbour statisitcs, and covering radius. They made the following
\begin{conjecture*}[Bourgain, Rudnick, Sarnak, \cite{BRS}] Let $\Omega_R$ be a sequence of spherical balls, or annuli, and $\delta > 0$ be a given number. If $N_n^{-1+\delta} \ll \sigma(\Omega_R) \ll N_n^{-\delta}$ as $n \to +\infty$, $n \ne 0,4,7 \pmod{8}$, then
	$$
	\int_{SO(3)} \biggl| Z(n; g\Omega_R) - \frac{\sigma(\Omega_R)}{\sigma(S^2)} N_n \biggr|^2  d\mu \sim \frac{\sigma (\Omega_R)}{\sigma(S^2)} N_n.
	$$
\end{conjecture*}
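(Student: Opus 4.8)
The plan is to reduce the conjectured asymptotic to a single cancellation estimate for the off-diagonal pair statistics of the Linnik points, and to attack that estimate through the arithmetic of half-integral weight forms conditionally on GRH. To this end, put $p := \sigma(\Omega_R)/\sigma(S^2)$ and, for $\theta \in [0,\pi]$, let $F_\Omega(\theta)$ be the probability that a uniformly random rotate $g\Omega_R$ contains two given points of $S^2$ at spherical distance $\theta$; equivalently $F_\Omega(\theta) = \sigma(S^2)^{-1}\int_{S^2}\mathbf 1_{\Omega_R}(w)\,\mu^w_\theta(\Omega_R)\,dw$ with $\mu^w_\theta$ the uniform measure on the circle at distance $\theta$ from $w$. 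Writing $Z(n;g\Omega_R)=\sum_j \mathbf 1_{g\Omega_R}(\hat x_j)$ for the normalized lattice points $\hat x_j = x_j/\sqrt n$ and integrating over $SO(3)$ gives the exact identity $V\bigl(n;\Omega_R(\mathbf x)\bigr) = \sum_{j,k}\bigl(F_\Omega(\theta_{jk}) - p^2\bigr)$, $\theta_{jk}:=\mathrm{dist}(\hat x_j,\hat x_k)$. Since $\mathrm{diam}\,\Omega_R = 2R$ and $\sigma(\Omega_R)\asymp R^2$, one has $F_\Omega(\theta)=0$ for $\theta > 2R$, $F_\Omega(0)=p$, and (conditioning the trivial identity $\mathbb E[\mathbf 1_{\Omega_R}(u)\mathbf 1_{\Omega_R}(v)]=p^2$ for independent uniform $u,v$ on $\mathrm{dist}(u,v)$) $\int_0^\pi F_\Omega(\theta)\sin\theta\,d\theta = 2p^2$. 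The diagonal $j=k$ contributes exactly $N_n\bigl(F_\Omega(0)-p^2\bigr) = p(1-p)N_n = (1+o(1))\tfrac{\sigma(\Omega_R)}{\sigma(S^2)}N_n$, the predicted main term; hence the conjecture is \emph{equivalent} to $\mathcal E(n;\Omega_R) := \sum_{j\ne k}\bigl(F_\Omega(\theta_{jk})-p^2\bigr) = o\bigl(\sigma(\Omega_R)N_n\bigr)$. This is genuinely two-sided — both halves of $\sim$ reduce to it, and positivity yields only the useless $\mathcal E \ge -p(1-p)N_n$ — so $\sim$ is strictly stronger than the one-sided bound $V \ll \sigma(\Omega_R)N_n$ already available.

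\emph{Spectral form.} Let $f = \mathbf 1_{\Omega_R}$ and $f^{(\ell)}$ its projection onto degree-$\ell$ spherical harmonics; then $F_\Omega(\theta)-p^2 = \sum_{\ell\ge1}b_\ell P_\ell(\cos\theta)$ with $b_\ell = \sigma(S^2)^{-1}\|f^{(\ell)}\|_{L^2(S^2)}^2 \ge 0$, $\sum_{\ell\ge1}b_\ell = p-p^2$, and $b_\ell\asymp p^2\ell$ throughout the bulk $1\le\ell\lesssim L := R^{-1}$ (Bessel-type decay beyond). Dually, $\sum_{j,k}P_\ell(\langle\hat x_j,\hat x_k\rangle) = \tfrac{\sigma(S^2)}{2\ell+1}\sum_{|m|\le\ell}|W_{\ell m}(n)|^2 =: \mathcal W_\ell(n)$ with Weyl sums $W_{\ell m}(n) = \sum_j Y_{\ell m}(\hat x_j)$; the hyperoctahedral symmetry group $B_3$ of $\Z^3$ forces each $W_{\ell m}(n)$ into the ($\asymp\ell$-dimensional) space of $B_3$-invariant degree-$\ell$ harmonics, so $\mathcal W_\ell(n) \equiv 0$ unless $\ell = 0$ or $\ell$ is even with $\ell\ge 4$. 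Because the Linnik set is $\pm$-symmetric, the diagonal-plus-antipodal part of $\mathcal W_\ell(n)$ is \emph{exactly} $2N_n$, so $\mathcal W_\ell(n) = 2N_n + E_\ell(n)$ with $E_\ell(n) = \sum_{k\ne j,\,-j}P_\ell(\langle\hat x_j,\hat x_k\rangle)$, and $\mathcal E$ reduces — up to a harmless $O(N_n p^2 L) = o(\sigma(\Omega_R)N_n)$ term coming from $\sum_\ell(-1)^\ell b_\ell$ — to
$$
\sum_{\ell\ \mathrm{even},\ \ell\ge4} b_\ell\,|E_\ell(n)| = o\bigl(\sigma(\Omega_R)N_n\bigr),
$$
i.e. the shifted-convolution (non-diagonal) part $E_\ell(n)$ of the second moment $\sum_m|W_{\ell m}(n)|^2$ must be small on $b_\ell$-weighted average over even $\ell\lesssim L$. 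The scaling is tight: for the smallest admissible caps $p\asymp N_n^{-1+\delta}$, so $L\asymp N_n^{(1-\delta)/2}$, and the number of Linnik pairs within distance $2R$ is $\asymp pN_n^2$, a factor $pN_n\to\infty$ larger than the target $pN_n$ — the bound must come from cancellation, never from $|P_\ell|\le1$.

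\emph{Arithmetic input.} Identify $W_{\ell m}(n) = n^{-\ell/2}a_n(\theta_{P_m})$ with the normalized $n$-th Fourier coefficient of the weight-$(\ell+\tfrac32)$ cusp form $\theta_{P_m}(z) = \sum_{x\in\Z^3}P_m(x)\,e(|x|^2 z)$ on $\Gamma_0(4)$, $P_m$ a basis of degree-$\ell$ harmonics. By the Shimura–Waldspurger correspondence, $\sum_m|W_{\ell m}(n)|^2$ is a weighted sum, over the associated weight-$(2\ell+2)$ newforms $g$, of central values $L(\tfrac12,\,g\otimes\chi)$ with $\chi$ a quadratic character attached to $n$, whose Eisenstein main term is precisely the diagonal $\tfrac{2\ell+1}{\sigma(S^2)}\cdot 2N_n$ and whose cuspidal remainder is $\asymp\tfrac{2\ell+1}{\sigma(S^2)}E_\ell(n)$. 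For slowly growing $\ell\le\ell_0 := (\log N_n)^{A}$ one invokes effective equidistribution of the Linnik points with a power saving in $n$ — on GRH this follows from subconvexity for these $L$-functions and for their symmetric squares — giving $E_\ell(n)\ll\ell^{C}N_n^{1-\delta}$, which sums to $o(\sigma(\Omega_R)N_n)$. For $\ell_0 < \ell \ll L\log N_n$ one applies Cauchy–Schwarz to $\sum_\ell b_\ell|E_\ell(n)|$ (using $b_\ell\asymp p^2\ell$) and reduces to the near-sharp mean-square bound
$$
\sum_{\ell\le L}\frac{|E_\ell(n)|^2}{2\ell+1} \ll L\,N_n^2(\log N_n)^{-B}\quad\text{uniformly for }L\le N_n^{1/2-\eta},
$$
equivalently: the second moment of $L(\tfrac12,\,g\otimes\chi)$ over weight-$(2\ell+2)$ newforms, averaged over $\ell$, has its true size with a $\log$-power saving. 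This is where the Soundararajan–Harper high-moments machinery deployed in this paper enters — to produce absolute constants in place of $N_n^{\eps}$ — together with a Deshouillers–Iwaniec type spectral large sieve for the family; crucially one must \emph{extract the main term} of this moment, verifying that it matches $2N_n$, not merely bound it. The within-$B_3$-orbit pairs in $E_\ell(n)$ form structured one-parameter subsums (governed by equidistribution of $\{|a-b| : a^2+b^2+c^2=n\}$ and its analogues) which are treated separately and contribute $\ll N_n p^{3/2} = o(\sigma(\Omega_R)N_n)$.

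\emph{Main obstacle.} Everything hinges on the uniform mean-square estimate above holding all the way to the edge $L\asymp N_n^{1/2}$, with the correct main term and a genuine $\log$-power (not merely $N_n^{\eps}$) saving. Pointwise subconvex/GRH bounds for individual weights $\ell$ inevitably lose a positive power of $\ell$ and become useless once $\sigma(\Omega_R)$ approaches its minimal admissible size $N_n^{-1+\delta}$, so an averaged fourth-moment input is unavoidable. Even granting GRH, \emph{evaluating} — not just bounding — the second moment of $L(\tfrac12,\,g\otimes\chi)$ over weight-$(2\ell+2)$ newforms, uniformly in both the weight (up to $\asymp N_n^{1/2}$) and the modulus $n$, and showing its main term is asymptotically $2N_n$ rather than merely $\asymp N_n$, is at the limit of current technology: the Soundararajan–Harper method yields moment bounds of the right order of magnitude — the correct tool — but upgrading ``right order of magnitude'' to ``right asymptotic constant'' uniformly over this two-parameter family is the heart of the matter. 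A persistent secondary difficulty is that the target constant is itself the fluctuating quantity $N_n \asymp \sqrt n\,L(1,\chi_{-n})\times(\text{local factors})$, whose deviation from $n^{1/2}$ is $n^{o(1)}$ — larger than any fixed power of $\log n$ — so the asymptotic $\sim$ must be proved relative to $N_n$, which forces every estimate to be carried out on the spectral side, where $N_n$ appears exactly as the diagonal, rather than through the sizes of individual Fourier coefficients.
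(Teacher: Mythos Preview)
The statement you are attempting to prove is a \emph{conjecture}, not a theorem: the paper states it as the Bourgain--Rudnick--Sarnak conjecture and does not prove it. The paper's contribution is the weaker one-sided bound $V(n;\Omega_R) \les c\,\sigma(\Omega_R)N_n$ (Theorem~1), conditional on GRH, with no matching lower bound and no identification of the asymptotic constant. There is therefore no ``paper's own proof'' to compare against; the full asymptotic remains open.

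Your proposal is not a proof but a research outline, and you say so yourself in the ``Main obstacle'' paragraph. The reduction to the off-diagonal sum $\mathcal E(n;\Omega_R)=o(\sigma(\Omega_R)N_n)$, the spectral expansion in Legendre polynomials, and the splitting into small and large $\ell$ are all correct and essentially standard --- but they only \emph{reformulate} the conjecture rather than resolve it. The decisive missing step is the one you flag: one must \emph{evaluate} (not merely bound) the second moment of $L(\tfrac12,g\otimes\chi)$ over weight-$(2\ell+2)$ newforms, averaged in $\ell$ up to $L\asymp N_n^{1/2}$, with the correct main term matching $2N_n$ and a genuine saving in the error. The Soundararajan--Harper machinery deployed in this paper produces upper bounds of the right \emph{order of magnitude}; it does not, and is not designed to, extract asymptotic \emph{constants}. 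Upgrading ``right order'' to ``right constant'' uniformly in both the weight and the conductor is precisely the open problem, so your proposal terminates at the same barrier the paper stops short of.
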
 In their paper Bourgain, Rudnick, and Sarnak obtained the upper bound assuming the Generalized Lindel{\"o}f Hypothesis:   

\begin{theorem*}[Bourgain, Rudnick, Sarnak, \cite{BRS}]
	Let $\Omega_R$ be a sequence of spherical balls, or annuli, and $N_n^{-1+\delta} \ll \sigma(\Omega_R) \ll N_n^{-\delta}$ for given $\delta \in (0, 1)$. Assume the Lindel{\"o}f Hypothesis for standard $GL(2)/\Q$ $L$-functions. Then for squarefree $n \ne 7 \pmod{8}$, we have
	$$
	\int_{SO(3)} \biggl| Z(n; g\Omega_R) - \frac{\sigma(\Omega_R)}{\sigma(S^2)} N_n \biggr|^2  d\mu \ll_{\varepsilon} n^{\varepsilon} \sigma(\Omega_R) N_n
	$$ for any $\varepsilon > 0$.
\end{theorem*}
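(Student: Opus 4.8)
\emph{Proof plan.} The plan is to split the problem into a purely geometric reduction of the variance to an arithmetic sum over the lattice points, and then an analytic estimate of that sum through half-integral weight theta series and central $L$-values, with the Lindel\"of Hypothesis entering only at the very end. Write $\mathcal{E}_n\subset S^2$ for the set of $N_n$ lattice points on $x^2+y^2+z^2=n$ rescaled to the unit sphere, $P_\ell$ for the Legendre polynomials, and $Y_{\ell m}$ for the $L^2$-normalized spherical harmonics. First I would write $Z(n;g\Omega_R)-\tfrac{\sigma(\Omega_R)}{4\pi}N_n=\sum_{\xi\in\mathcal{E}_n}f(g^{-1}\xi)$ with $f=\mathbf{1}_{\Omega_R}-\tfrac{\sigma(\Omega_R)}{4\pi}$. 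Since $\Omega_R$ is a cap (or annulus) about a fixed centre $e_0$, the function $f$ is zonal, $f=\sum_{\ell\ge 1}\tilde b_\ell\,P_\ell(\langle\,\cdot\,,e_0\rangle)$, and Parseval on $S^2$ gives $\sum_{\ell\ge1}\tfrac{|\tilde b_\ell|^2}{2\ell+1}=\tfrac{1}{4\pi}\|f\|_{L^2(S^2)}^2=\tfrac{\sigma(\Omega_R)}{4\pi}\bigl(1-\tfrac{\sigma(\Omega_R)}{4\pi}\bigr)$. Expanding $P_\ell$ by the addition theorem, squaring $\sum_\xi f(g^{-1}\xi)$, and integrating over $SO(3)$ — where the orthogonality $\int_{SO(3)}\overline{Y_{\ell m}(g e_0)}\,Y_{\ell'm'}(g e_0)\,d\mu=\tfrac1{4\pi}\delta_{\ell\ell'}\delta_{m m'}$ kills all cross terms — collapses the variance to
\[
V(n;\Omega_R)=\sum_{\ell\ge1}\frac{|\tilde b_\ell|^2}{2\ell+1}\,S_\ell(n),\qquad S_\ell(n):=\sum_{\xi,\xi'\in\mathcal{E}_n}P_\ell(\langle\xi,\xi'\rangle)=\frac{4\pi}{2\ell+1}\sum_{m=-\ell}^{\ell}\Bigl|\sum_{\xi\in\mathcal{E}_n}Y_{\ell m}(\xi)\Bigr|^2\ge 0 .
\]
It therefore suffices to prove $S_\ell(n)\ll_\eps n^\eps N_n$ uniformly for $1\le\ell\le n$: this bound and the Parseval identity give the $\ell\le n$ part of $V$ as $\ll_\eps n^\eps N_n\,\sigma(\Omega_R)$, while degrees $\ell>n$ are disposed of by the trivial bound $0\le S_\ell(n)\le N_n^2$ together with the decay $\tfrac{|\tilde b_\ell|^2}{2\ell+1}\ll R^2\ell^{-2}$ of the Legendre coefficients for $\ell\gg 1/R$, the tail coming out $\ll R$, which is $\ll n^\eps\sigma(\Omega_R)N_n$ precisely because $\sigma(\Omega_R)\gg N_n^{-1+\delta}$.

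\emph{The arithmetic estimate.} For $\ell\ge1$ the sums $\sum_{\xi\in\mathcal{E}_n}Y_{\ell m}(\xi)$ are, up to the factor $n^{\ell/2}$, the $n$-th Fourier coefficients of the harmonic theta series $\theta_{P_{\ell m}}(\tau)=\sum_{v\in\Z^3}P_{\ell m}(v)e^{2\pi i|v|^2\tau}$, which are cusp forms of weight $\ell+\tfrac32$ on $\Gamma_0(4)$. Decomposing these in a Hecke eigenbasis $\{g_j\}$ of the relevant subspace of $S_{\ell+3/2}(\Gamma_0(4))$ (Kohnen plus space and the Shimura correspondence) and applying Parseval in the $(2\ell+1)$-dimensional space of degree-$\ell$ harmonics gives $\sum_m\bigl|\sum_\xi Y_{\ell m}(\xi)\bigr|^2=n^{-\ell}\sum_j\lambda_j|a_{g_j}(n)|^2$ with $\lambda_j\ge0$ and $\sum_j\lambda_j=\mathrm{tr}\bigl(\theta^*\theta|_{\deg\ell}\bigr)$. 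For each eigenform the Waldspurger--Kohnen--Zagier formula expresses $|a_{g_j}(n)|^2$ via the central value $L\bigl(\tfrac12,f_j\otimes\chi_{\pm n}\bigr)$ of the weight-$(2\ell+2)$ Shimura lift $f_j$ twisted by the quadratic character of the fundamental discriminant $\pm n$ or $\pm4n$; squarefreeness of $n$ guarantees this discriminant is fundamental, the residue of $n$ modulo $8$ picks the sign, and a root number $-1$ forces a harmless vanishing. Combining this identity with the Rankin--Selberg formulas for $\langle\theta_{P_{\ell m}},\theta_{P_{\ell m}}\rangle$ and $\langle f_j,f_j\rangle$, with Stirling's formula for the accompanying Gamma factors, and with the elementary bound $L(1,\Sym^2 f_j)\asymp(\log\ell)^{\pm O(1)}$, all the exponential-in-$\ell$ factors cancel, leaving
\[
\sum_{m=-\ell}^{\ell}\Bigl|\sum_{\xi\in\mathcal{E}_n}Y_{\ell m}(\xi)\Bigr|^2\ \ll\ \ell\,n^{1/2}(\log\ell)^{O(1)}\max_j L\!\left(\tfrac12,f_j\otimes\chi_{\pm n}\right).
\]
Now the Lindel\"of Hypothesis for standard $GL(2)/\Q$ $L$-functions bounds each central value by $(\ell^2 n^2)^\eps$ (the analytic conductor being $\asymp\ell^2 n^2$), whence $S_\ell(n)=\tfrac{4\pi}{2\ell+1}\sum_m|\cdots|^2\ll_\eps n^{1/2}(\ell n)^\eps\ll_\eps n^\eps N_n$ uniformly for $1\le\ell\le n$, using $N_n\gg n^{1/2-o(1)}$, which holds since $n$ is squarefree and $n\not\equiv7\pmod 8$. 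Together with the reduction above, this proves the theorem.

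\emph{The main obstacle.} The geometric reduction is routine harmonic analysis on $S^2$ and $SO(3)$; the real difficulty is the \emph{uniformity in the harmonic degree} $\ell$ in the second step. Because admissible caps can have radius $R$ as small as a fixed negative power of $n$, the degrees $\ell$ carrying appreciable Legendre mass reach a fixed positive power of $n$, so any estimate for $S_\ell(n)$ losing even a positive power of $\ell$ would destroy the exponent $n^\eps$. One cannot simply cite subconvexity for a fixed family: one must push the Waldspurger--Kohnen--Zagier identity through with every archimedean, period, and dimensional factor made explicit and verified to contribute only $(\log\ell)^{O(1)}$ after cancellation — that is, one genuinely needs Lindel\"of with uniformity in the weight aspect, which is what the hypothesis provides. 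Assembling the correct local and archimedean normalizations across the residues of $n$ modulo $8$, and checking that the forced vanishings are benign, is then a bounded case analysis.
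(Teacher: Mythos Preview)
The paper does not give its own proof of this theorem; it is quoted from \cite{BRS} as background. What the paper \emph{does} contain is the machinery of Section~3 (proof of Theorem~1), which is essentially the Bourgain--Rudnick--Sarnak argument with the pointwise Lindel\"of bound replaced by the sharper moment bound of Theorem~2. So the relevant comparison is with that framework.

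Your proposal follows the same strategy: expand the variance in spherical harmonics to get a sum of squared Weyl sums weighted by Legendre coefficients, then control the Weyl sums by central $L$-values via a Waldspurger-type formula and invoke Lindel\"of. This is correct in outline and is exactly the BRS route. Two differences from the paper's Section~3 treatment are worth noting. First, the paper smooths $K_R$ by convolving with a small cap $K_\rho$ in order to justify interchanging sums and integrals and to suppress the tail; you instead appeal directly to Parseval (legitimate for the $L^2$ variance) and kill the tail $\ell>n$ by the trivial bound $S_\ell(n)\le N_n^2$ together with the decay of the Legendre coefficients. Both work, though your stated decay $|\tilde b_\ell|^2/(2\ell+1)\ll R^2/\ell^2$ is off by one power of $R$ (the correct rate is $\ll R/\ell^2$, from $h_R(\ell)\asymp R^{1/2}\ell^{-3/2}$ for $\ell R\gg 1$); the tail estimate still closes with the correct rate because $\sigma(\Omega_R)\gg N_n^{-1+\delta}$.

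Second, the explicit period formula the paper uses (via Jacquet--Langlands and B\"ocherer--Schulze-Pillot, cf.\ \cite[Lemma~2.16]{Humphries}) gives
\[
|W_{\phi_{j,m}}(n)|^2 \;=\; \frac{C\,N_n^2}{\sqrt{n}\,L(1,\chi_{-n})^2}\cdot\frac{L(\tfrac12,f_{j,m})\,L(\tfrac12,f_{j,m}\otimes\chi_{-n})}{L(1,\Sym^2 f_{j,m})},
\]
i.e.\ a \emph{product} of two central $L$-values, not just the twisted one. Your displayed bound carries only $L(\tfrac12,f_j\otimes\chi_{\pm n})$; the missing factor $L(\tfrac12,f_j)$ is what would emerge from tracking your $\lambda_j$ (equivalently, the Rallis/Rankin--Selberg inner products of the theta lifts). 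Under Lindel\"of this extra factor is $\ll \ell^\eps$ and is absorbed, so your final estimate $S_\ell(n)\ll_\eps n^{1/2}(\ell n)^\eps$ is right, but the intermediate formula should be corrected. Also, ``elementary'' is generous for the lower bound $L(1,\Sym^2 f)\gg (\log\ell)^{-c}$: that is Hoffstein--Lockhart, which the paper cites explicitly. With these adjustments your plan is the BRS proof.
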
 Recently Humphries and Radziwi{\l\l}~\cite{Humphries} have obtained unconditional results for asymptotics in the case of a thin annulus. In the present paper, we get an upper bound of the conjectured order assuming the Grand Riemann Hypothesis:
\begin{theorem} \label{thm1}
	Assume the Grand Riemann Hypothesis for $GL(2)/\Q$ and Dirichlet $L$-functions. Then for squarefree $n \ne 7 \pmod{8}$, we have
	$$
	\int_{SO(3)} \biggl| Z(n; g\Omega_R) - \frac{\sigma(\Omega_R)}{\sigma(S^2)} N_n \biggr|^2  d\mu \les c \sigma(\Omega_R) N_n, 
	$$ where $\sigma(\Omega_R) = N_n^{-\delta}$ for a fixed $\delta \in (0, 1)$, and $c$ is an absolute constant. 
\end{theorem}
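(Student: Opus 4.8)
The plan is to pass from the variance to a spectral sum over spherical harmonics, identify the resulting harmonic energies of the lattice set with harmonic averages of central values of automorphic $L$-functions via the theta correspondence and Waldspurger's formula, and evaluate those averages under GRH by the Soundararajan--Harper moment method. Concretely, I would first write $Z(n;g\Omega_R)=\sum_{\mathbf x\in S_n}\chi_{\Omega_R}(g^{-1}\mathbf x/\sqrt n)$, expand $\chi_{\Omega_R}$ in $L^2$-normalised spherical harmonics $Y_{\ell,m}$, and use Schur orthogonality of the Wigner matrices on $SO(3)$ to get
\[ V\bigl(n;\Omega_R(\mathbf x)\bigr)=\sum_{\ell\ges1}\bigl|\widehat{\chi_{\Omega_R}}(\ell)\bigr|^2\,\mathcal F_n(\ell),\qquad \mathcal F_n(\ell):=\frac1{2\ell+1}\sum_{|m|\les\ell}\Bigl|\sum_{\mathbf x\in S_n}Y_{\ell,m}\!\bigl(\tfrac{\mathbf x}{\sqrt n}\bigr)\Bigr|^2=\frac1{4\pi}\sum_{\mathbf x,\mathbf y\in S_n}P_\ell\!\Bigl(\tfrac{\langle\mathbf x,\mathbf y\rangle}{n}\Bigr), \]
the last identity being the addition theorem ($P_\ell$ the Legendre polynomial; the cap is rotationally symmetric about its centre, so only the zonal $m=0$ coefficients appear). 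By Parseval $\sum_{\ell\ges0}|\widehat{\chi_{\Omega_R}}(\ell)|^2=\sigma(\Omega_R)$, hence $\sum_{\ell\ges1}|\widehat{\chi_{\Omega_R}}(\ell)|^2\les\sigma(\Omega_R)$, and so Theorem~\ref{thm1} would follow \emph{at once} from the uniform bound $\mathcal F_n(\ell)\ll N_n$ for all $\ell\ges1$ with an absolute constant, since then $V\ll N_n\sum_{\ell\ges1}|\widehat{\chi_{\Omega_R}}(\ell)|^2\ll N_n\sigma(\Omega_R)$. The diagonal $\mathbf x=\mathbf y$ contributes exactly $N_n/4\pi$ to $\mathcal F_n(\ell)$; for $\ell$ larger than a fixed power of $n$ the bound $\mathcal F_n(\ell)\ll N_n$ is immediate from $|P_\ell(\cos\gamma)|\ll(\ell\sin\gamma)^{-1/2}$ together with the $\gg1$ separation of distinct lattice points (so $\sin\gamma_{\mathbf x\mathbf y}\gg n^{-1/2}$), and the genuine task is to prove $\mathcal F_n(\ell)\ll N_n$ for $\ell$ up to a fixed power of $n$.

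Next I would invoke the arithmetic input. The degree-$\ell$ harmonic projections of the ternary theta series attached to $x^2+y^2+z^2$ are, via the Maass--Shimura correspondence (Iwaniec, Duke, Katok--Sarnak) and Waldspurger's formula in the explicit form of Kohnen--Zagier / Baruch--Mao --- equivalently Gross's formula for the definite quaternion algebra ramified at $\{2,\infty\}$ --- governed by the $n$-th Fourier coefficients of half-integral weight forms, whose squares are proportional to the central values $L(\tfrac12,f\otimes\chi_{-4n})$ as $f$ runs over a Hecke eigenbasis of cusp forms of weight $\asymp2\ell$ and level dividing a fixed integer; here the squarefreeness of $n$ and the hypothesis $n\ne7\pmod8$ guarantee a primitive lattice point and pin down the local conditions at $2$ and at the primes dividing $n$. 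This packages $\mathcal F_n(\ell)$ as
\[ \mathcal F_n(\ell)=\frac{N_n}{4\pi}+O\!\Bigl(N_n\sum_f\omega_f\,L(\tfrac12,f\otimes\chi_{-4n})\Bigr)+(\text{lower order}), \]
with nonnegative harmonic weights $\omega_f$ normalised so that $\sum_f\omega_f\asymp1$, the central values being nonnegative by Waldspurger/Guo. Thus $\mathcal F_n(\ell)\ll N_n$ reduces to the claim that the harmonic first moment $\sum_f\omega_f\,L(\tfrac12,f\otimes\chi_{-4n})$ is bounded by an absolute constant, uniformly in $\ell$ and $n$ --- whereas the Generalized Lindel\"of Hypothesis would only give $L(\tfrac12,f\otimes\chi_{-4n})\ll n^{\eps}$ term-by-term, which is exactly the Bourgain--Rudnick--Sarnak bound.

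The heart of the argument is this first-moment estimate under GRH, and here the Soundararajan--Harper method enters. Since $L(\tfrac12,f\otimes\chi_{-4n})\ges0$, GRH for $GL(2)/\Q$ and Soundararajan's inequality give, for $x\ges2$,
\[ \log L(\tfrac12,f\otimes\chi_{-4n})\les\Re\sum_{p\les x}\frac{\lambda_f(p)\chi_{-4n}(p)}{p^{1/2+1/\log x}}-\frac12\sum_{p\les x}\frac1p+(1+o(1))\frac{\log\mathfrak q}{\log x}+O(1),\qquad\mathfrak q\asymp n\ell^2; \]
choosing $x$ a small fixed power of $n\ell$ makes the conductor term $O(1)$, so $\sum_f\omega_f\,L(\tfrac12,f\otimes\chi_{-4n})\ll(\log x)^{-1/2}\sum_f\omega_f\exp\bigl(\Re\sum_{p\les x}\lambda_f(p)\chi_{-4n}(p)p^{-1/2}\bigr)$. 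I would then run the Soundararajan--Harper machinery: partition the primes into dyadic blocks, expand each exponential, evaluate the $f$-average by the Petersson (Eichler--Selberg) trace formula --- whose off-diagonal Kloosterman terms are controlled by the Bessel cutoff together with GRH for the Dirichlet $L$-function $L(s,\chi_{-4n})$, which supplies the cancellation in the short $\chi_{-4n}$-twisted character sums that arise --- and reassemble the blocks following Harper's iteration. The prime sum has $f$-variance $\asymp\log\log x$, so its exponential moment is $\asymp(\log x)^{1/2}$, and this is \emph{exactly} compensated by the $(\log x)^{-1/2}$ from the secondary term $-\tfrac12\sum_{p\les x}p^{-1}$ (the degree-two manifestation of the orthogonal first-moment exponent $k(k-1)/2=0$). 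It is essential to use Harper's sharpening and not just Soundararajan's bound: the latter would leave a stray $(\log n)^{\eps}$ in place of an absolute constant and only recover the conditional bound of Bourgain--Rudnick--Sarnak. With Harper's refinement one gets $\sum_f\omega_f\,L(\tfrac12,f\otimes\chi_{-4n})\ll1$, hence $\mathcal F_n(\ell)\ll N_n$ uniformly, and the spectral expansion gives Theorem~\ref{thm1}.

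The main obstacle is entirely in the last step. The delicate points are: (i) matching the secondary main term precisely enough that the logarithm genuinely cancels rather than being merely bounded --- this is what forces Harper's version of the moment method; (ii) controlling the Kloosterman off-diagonal of the trace formula uniformly in the weight $2\ell$ and in $n$ using GRH for $L(s,\chi_{-4n})$, bearing in mind that the twisted conductor $n\ell^2$ is comparable to, and can exceed, the size of the family, so there is no unconditional ``self-averaging'' to fall back on; and (iii) the $2$-adic and ramified-prime bookkeeping in the Waldspurger/Gross step, which is precisely where the hypotheses on $n$ are used, and where one must also keep $N_n\asymp\sqrt n\,L(1,\chi_{-4n})$ under control. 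By contrast the spectral expansion of the first paragraph is a routine harmonic computation, and the $L$-function reduction of the second paragraph is a repackaging of known explicit formulae.
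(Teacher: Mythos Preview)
Your spectral reduction and the overall architecture --- variance $\to$ Weyl sums $\to$ Waldspurger $\to$ harmonic first moment under GRH via Soundararajan--Harper --- are the same as the paper's. The genuine gap is your claim that the moment method delivers $\mathcal F_n(\ell)\ll N_n$ \emph{uniformly} in $\ell\ges1$. It does not, and in fact this uniform bound is not expected to hold.

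Harper's method requires the family (of size $\asymp\ell$) to be large enough that Dirichlet polynomials of length $x$ --- a small fixed power of the analytic conductor $\mathfrak q\asymp n\ell^2$ --- can be averaged via the Petersson formula; concretely one needs products of primes up to $x$ of total size $\les\sqrt{k}\asymp\sqrt{\ell}$, which forces $\ell\ges n^{\eps}$ for some fixed $\eps>0$. Below that threshold the family is too small relative to the twist, the trace formula gives nothing, and one is thrown back on the pointwise GRH bound $L(\tfrac12,f\otimes\chi_{-n})\ll\exp(c\log n/\log\log n)$. For bounded even $\ell$ the claim $\mathcal F_n(\ell)\ll N_n$ would amount to $L(\tfrac12,f)L(\tfrac12,f\otimes\chi_{-n})\ll L(1,\chi_{-n})$ for a \emph{fixed} cusp form $f$ and all squarefree $n$, which is false. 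Your Legendre-decay argument for large $\ell$ only bites once $\ell$ exceeds roughly $n^{3/2}$, so neither mechanism covers the range $1\ll\ell\les n^{\eps}$.

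The paper avoids the uniform bound entirely. It splits at $M=\sigma(\Omega_R)^{-1/2}\exp(-2\log n/\log\log n)$, which is a fixed positive power of $n$ precisely because $\sigma(\Omega_R)=N_n^{-\delta}$ with $\delta$ fixed. For $m>M$ it applies the Harper bound (Theorem~\ref{thm2}, whose hypothesis is exactly $m>n^{\eps}$) to get $\sum_{f}L(\tfrac12,f)L(\tfrac12,f\otimes\chi_{-n})/L(1,\Sym^2 f)\ll m\,L(1,\chi_{-n})$, and pairs this with $\sum_m m\,\tilde h_{R+\rho}^2(m)\ll\sigma(\Omega_R)$. For $m\les M$ it abandons averaging and uses Chandee's pointwise bound on each central value, accepting a loss of $\exp(c\log n/\log\log n)$, but compensates by the \emph{pointwise} estimate $\tilde h_{R+\rho}(m)\ll\sigma(\Omega_R)$ (so $\tilde h^2\ll\sigma(\Omega_R)^2$, much stronger than what Parseval alone gives); the small-$m$ contribution is then $\ll\sqrt n\,\sigma(\Omega_R)^2 M^2\exp(c\log n/\log\log n)$, and the specific choice of $M$ makes this $o(\sigma(\Omega_R)N_n)$. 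This splitting, and the use of $\tilde h^2\ll\sigma(\Omega_R)^2$ in the short range, are the missing ingredients in your outline.

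Two smaller corrections. The Waldspurger input here produces the product $L(\tfrac12,f)\,L(\tfrac12,f\otimes\chi_{-n})$ weighted by $1/L(1,\Sym^2 f)$, not a single twisted central value; this matters because the Harper main term then naturally carries a factor $L(1,\chi_{-n})$, which is exactly what converts $\sqrt n$ into $N_n$. And GRH for $L(s,\chi_{-n})$ is not used to control the Kloosterman off-diagonal --- that is handled purely by the Bessel cutoff in the Petersson formula, since all Dirichlet polynomials have length $\les\sqrt{k}$ --- but rather to compare $\sum_{p\les x}\chi_{-n}(p)/p$ with $\log L(1,\chi_{-n})$ (Lemma~\ref{lem6}) and to bound $L(1,\chi_{-n})$ from below.
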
 

We express the variance as a sum of Weyl sums of spherical harmonics:
\begin{equation} \label{variance_first}
V(n; \Omega_R) = \sum_{m=1}^{+\infty} h^2 (m) \sum_{j=1}^{2m+1} \bigl| W_{\phi_{j,m}} (n) \bigr|^2, 
\end{equation} where 
$$
\widehat{\mathcal{E}}(n) := \biggl\{ \biggl( \frac{x}{\sqrt{n}}, \frac{y}{\sqrt{n}}, \frac{z}{\sqrt{n}} \biggr) \in S^2: x^2 + y^2 + z^2 = n \biggr\}, \qquad W_{\phi_{j,m}} (n) := \sum_{\mathbf{x} \in \widehat{\mathcal{E}}(n)} \phi_{j,m} ( \mathbf{x}),
$$ and $\{\phi_{j,m}\}$, $-m \les j \les m$, $m = 1, 2, 3, \ldots$ form an orthonormal basis of eigenfunctions of the spherical Laplacian (see~\cite{BRS, Lubotzky}).

The expression in the right hand side of~\eqref{variance_first} can be bounded by a sum of of the product of certain $L$-functions. The famous work of Waldspurger~\cite{Wald_old} gives a relation between Weyl sums $W_{\phi_{j,m}}$ and Fourier coefficients of half-integral weight forms. Another work of Waldspurger~\cite{Waldspurger} provides an identity for the squares of the absolute values of the Weyl sums in terms of the products of $L$-functions. More precisely, the eigenfunctions $\phi_{j,m}$ can be viewed as the the functions on the subspace $D^0 (\mathbb{R})$ of the Hamiltonian quaternion algebra $D(\mathbb{R})$ with trace zero elements. The Weyl sum does not vanish only if $m$ is even and $\phi_{j,m}$ satisfies certain invariance conditions. In this case the Jacquet-Langlands lift~\cite{Mar_White} gives a bijective correspondence between $\phi_{j,m}$ and holomorphic newforms of weight $2m + 2$ and level 2. Precisely,
$$
\bigl| W_{\phi_{j,m}} (n) \bigr|^2 = \frac{C \bigl| \widehat{\mathcal{E}}(n) \bigr|^2}{\sqrt{n} L(1, \chi_{-n})^2} \frac{L\bigl( \frac{1}{2}, f_{j,m} \bigr) L\bigl( \frac{1}{2}, f_{j,m} \otimes \chi_{-n} \bigr)}{L\bigl(1, \Sym^2 f_{j,m}\bigr)},
$$ and so
\begin{equation} \label{variance}
V(n, \Omega_R) \les \sum_{m=1}^{+\infty} h^2 (m) \sum_{j=1}^{2m+1} c \sqrt{n} \frac{L\bigl(\frac{1}{2} , f_{j,m}\bigr) L\bigl(\frac{1}{2} , f_{j,m} \otimes \chi_{-n}\bigr)}{L\bigl(1, \Sym^2 f_{j,m}\bigr)},
\end{equation} where $f_{j,m}$ form a Hecke basis of the space of cusp forms for the congruence subgroup $\Gamma_0 (2)$ of weight $2m + 2$ and trivial nebentypus, $\chi_{-n}$ is the quadratic Dirichlet character corresponding to the extension $\Q(\sqrt{-n})$, $C, c > 0$ are absolute constants. For more details on~\eqref{variance} see~\cite[Lemma~2.16]{Humphries} and~\cite[Section~2]{Boch_Schu_Sarnak}. 

In this way, the problem reduces to estimating the first moment of the product of $L$-functions. We apply the technique developed by Soundararajan~\cite{Sound} and Harper~\cite{Harper} for the moments of zeta function assuming the Riemann Hypothesis. The goal is to get an upper bound for the sum on the right hand side of~\eqref{variance} when $m \gg n^{\delta / 2}$. The desired upper bound is given by
\begin{theorem} \label{thm2}
	Let $m > n^{\varepsilon}$ for fixed $\varepsilon > 0$. Assuming the Grand Riemann Hypothesis for $GL(2)/\Q$ and Dirichlet $L$-functions, we have
	$$
	\sum_{f \in S_{2m+2} (\Gamma_0 (2))} \frac{L\bigl( \frac{1}{2}, f \bigr) L \bigl( \frac{1}{2}, f \otimes \chi_{-n} \bigr)}{L \bigl( 1, \Sym^2 f \bigr)} \les m L(1, \chi_{-n}) \exp \bigl( U(n,m) \bigr),
	$$ where 
	$$
	U(n,m) = C_1 \max \biggl( 1, \frac{1}{2} \frac{\log n}{\log m} \biggr) \exp \biggl(C_2 \max \biggl( 1, \frac{1}{2} \frac{\log n}{\log m} \biggr) \biggr),
	$$ and $C_1$ and $C_2$ are the absolute constants. 
\end{theorem}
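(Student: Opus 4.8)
The plan is to combine the Petersson formula for $S_{2m+2}(\Gamma_0(2))$ with the conditional pointwise bound of Soundararajan and the multi‑scale truncation of Harper. Since $1/L(1,\Sym^2 f)$ equals an absolute constant times $(2m+1)\,\omega_f$, where $\omega_f=\Gamma(2m+1)/\bigl((4\pi)^{2m+1}\norm{f}^2\bigr)$ is the harmonic weight, the left‑hand side is, up to a constant, $m$ times the harmonic average $\sum_f\omega_f\,L(\tfrac12,f)L(\tfrac12,f\otimes\chi_{-n})$, and it suffices to bound this by $L(1,\chi_{-n})\exp(U(n,m))$. The product of central values is $\ges 0$ by the Waldspurger‑type identity, so one may pass to logarithms: under GRH, Soundararajan's bound (in the form valid for $GL(2)/\Q$ $L$‑functions) gives, for every $x\ges 2$,
$$\log\bigl(L(\tfrac12,f)L(\tfrac12,f\otimes\chi_{-n})\bigr)\les \mathcal D_f(x)+\frac{B\bigl(\log\mathfrak q(f)+\log\mathfrak q(f\otimes\chi_{-n})\bigr)}{\log x}+O(1),$$
with $B$ absolute, $\log\mathfrak q(f)\asymp\log m$, $\log\mathfrak q(f\otimes\chi_{-n})\asymp\log m+\log n$, and, writing $w_x(t)=\log(x/t)/\log x$,
$$\mathcal D_f(x)=\Re\sum_{p\les x}\frac{\lambda_f(p)\bigl(1+\chi_{-n}(p)\bigr)}{\sqrt p}\,w_x(p)\;+\;\sum_{p\les\sqrt x}\frac{\lambda_f(p)^2-2}{p}\,w_x(p^2).$$
Carrying along the prime‑square sum is essential: since $\lambda_f(p)^2-2=\lambda_f(p^2)-1$, it contributes a deterministic drift $-\log\log x+O(1)$ that will later cancel a spurious factor of $\log x$ in the main term.

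I would then take $x=m^{\theta_0}$ for a small absolute $\theta_0$, so that the middle term above is $\les C_1\ell$ with $\ell=\max(1,\tfrac12\log n/\log m)$, and it remains to bound $\sum_f\omega_f\exp(\mathcal D_f(x))$. Following Harper, decompose the primes in $(2,x]$ into blocks on which $\sum 1/p=O(1)$, write $\exp(\mathcal D_f)$ as the product of the corresponding block‑exponentials, and replace each by its degree‑$K_j$ Taylor polynomial, using the elementary fact that $e^y$ is at most $(1+e^{-cK})$ times its degree‑$K$ Taylor polynomial when $|y|\les K$, together with a high‑moment estimate on the rare event $|y|>K$. The degrees $K_j$ are chosen so that the resulting Dirichlet polynomial in the $\lambda_f(\cdot)$ has length at most $m^{2-\varepsilon}$. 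One then expands every monomial by the Petersson formula
$$\sum_f\omega_f\,\lambda_f(a)\lambda_f(b)=\delta_{a=b}+2\pi\,i^{-(2m+2)}\sum_{2\mid c}\frac{S(a,b;c)}{c}\,J_{2m+1}\!\Bigl(\frac{4\pi\sqrt{ab}}{c}\Bigr),$$
and, since each $ab\les m^{2-\varepsilon}$ while $J_{2m+1}(y)$ decays faster than any power of $m$ for $y=o(m)$, the Kloosterman–Bessel terms are negligible.

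What survives is the diagonal — the primes pairing up — which a standard combinatorial count bounds by $\exp(\tfrac12\Sigma+O(1))$ times the drift factor $\exp(-\log\log x+O(1))$, where $\Sigma=\sum_{p\les x}(1+\chi_{-n}(p))^2\,w_x(p)^2/p$. Using GRH for $L(s,\chi_{-n})$ — which applies since $x\ges m^{\theta_0}$ exceeds any fixed power of $\log n$ — one has $\sum_{p\les x}\chi_{-n}(p)w_x(p)^2/p=\log L(1,\chi_{-n})+o(1)$, hence $\Sigma\les 2\log\log x+2\log L(1,\chi_{-n})+O(1)$ (the condition $p\nmid n$ only helps) and $\exp(\tfrac12\Sigma)\ll(\log x)\,L(1,\chi_{-n})$, so the drift cancels the $\log x$. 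One also checks, via Petersson applied to $\sum_f\omega_f\lambda_f(p^2)$, that the fluctuation $\sum_{p\les\sqrt x}\lambda_f(p^2)w_x(p^2)/p$ contributes only $\exp(O(1))$ after averaging. Thus $\sum_f\omega_f\exp(\mathcal D_f(x))\ll L(1,\chi_{-n})\exp(O(1))$; reinstating the factor from the conductor term, the residuals of the multi‑scale truncation, and the $m$ from the weight conversion, and bookkeeping the dependence on $\ell$, produces the asserted bound with $U(n,m)=C_1\ell\exp(C_2\ell)$.

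The main obstacle is the balance in the second paragraph. The Petersson formula supplies cancellation only up to length $\asymp m^{2}$, whereas the approximate functional equation of $L(\tfrac12,f\otimes\chi_{-n})$ has natural length $\asymp mn$; so for $n$ not small one cannot simply truncate the exponential at a fixed depth, and Harper's block decomposition must be arranged so that the total length stays within the Petersson budget, the Taylor tails are controlled uniformly in $f$, and the diagonal still reproduces the true main term $L(1,\chi_{-n})$ rather than an extra power of $\log m$. Managing this trade‑off — which becomes delicate, and somewhat wasteful, when $\log n/\log m$ is large — is what fixes the admissible $\theta_0$ and the constants $C_1,C_2$, and is the heart of the argument.
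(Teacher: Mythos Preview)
Your proposal is correct and follows essentially the same route as the paper: Chandee's GRH upper bound for $\log\bigl|L(\tfrac12,f)L(\tfrac12,f\otimes\chi_{-n})\bigr|$, Harper's multi-scale block decomposition with Taylor truncation on each block, the Petersson formula to extract the diagonal, the identification of the main term with $L(1,\chi_{-n})\log x$ via GRH for $L(s,\chi_{-n})$, and the cancellation of the spurious $\log x$ by the deterministic drift $-\sum_{p}W(p^2,x)/p$ from the $\lambda_f(p^2)-1$ piece. One clarification worth making explicit: $\theta_0$ cannot in fact be taken absolute --- on the exceptional events you must re-apply Chandee's inequality with a shorter cutoff $X_j$, and the resulting conductor term $\asymp 3^{I-j}\theta_0^{-1}\ell$ can only be absorbed by the Rankin-trick saving if $\theta_0^{-1}\gg e^{C\ell}$, which is precisely what forces the shape $U(n,m)=C_1\ell\,e^{C_2\ell}$ you allude to in your final paragraph.
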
 

From the definition of $U(m,n)$ it follows that the upper bound is of size $m L(1, \chi_{-n})$ for $m \gg n^{\delta/2} = (\sigma(\Omega_R))^{-1/2}$. When $m$ is small we bound each term in the sum over $f \in S_{2m+2} (\Gamma_0 (2))$ individually instead of applying Theorem~\ref{thm2}.

The paper is organized as follows. In Section~\ref{sec2} we motivate the conjecture of Bourgain, Rudnick, and Sarnak by considering the random uniformly distributed points on the sphere. In Section~\ref{sec3} we deduce Theorem~\ref{thm1} from Theorem~\ref{thm2}. The rest of the paper is devoted to the proof of Theorem~\ref{thm2}. In Section~\ref{sec4} we outline the main ideas of Soundararajan's and Harper's approaches. In Section~\ref{sec5} we prove a few necessary combinatorial lemmas based on Petersson's trace formula. Using these lemmas and Harper's argument, we give a detailed proof of Theorem~\ref{thm2} in Section~\ref{sec6}.

\section{Random Model}
\label{sec2}

In this section we compute the expected value and the variance of random uniformly distributed points $\mathbf{x}_1$, $\ldots$, $\mathbf{x}_{N_n}$ from $\widehat{\mathcal{E}}(n)$ on the sphere inside a ball $\Omega_R (\mathbf{x})$ centered at some fixed point $\mathbf{x}$. Similarly to the previous section, denote by $Z(n; \Omega_R (\mathbf{x}))$ the number of points in the ball. Then $Z$ can be written as $\sum_{i=1}^N \xi_i (\mathbf{x})$, where
$$
\xi_i (\mathbf{x}) = 
\begin{cases}
\displaystyle 1,
\text{if } 
\displaystyle \mathbf{x} \in \Omega_R (\mathbf{x_i});
\\[2mm]
\displaystyle 0,
\text{otherwise}.
\end{cases}
$$ Note that one can think of random point $\mathbf{x}_i$ as of random rotation of the sphere moving $\mathbf{x}$ to $\mathbf{x}_i$. In this section we use renormalized Haar measure
$$
d\tilde\mu = \frac{d\mu}{\sigma(S^2)}, \quad \text{so that} \quad \tilde \sigma(\Omega_R) = \frac{\sigma(\Omega_R)}{\sigma(S^2)}, \quad \tilde \sigma(S^2) = 1.
$$ Then the expected value of the number of points inside the ball can be computed as follows:
$$
E[Z] = \int_{SO(3)} Z(n; g\Omega_R (\mathbf{x}))  d\tilde\mu = \sum_{i=1}^{N_n} E[\xi_i] = \sum_{i=1}^{N_n} \int_{SO(3)} {\mathbf{1}}_{\mathbf{x} \in g\Omega(\mathbf{x}_i)}  d\tilde\mu = \tilde\sigma(\Omega_R) N_n.
$$ The variance is
\begin{multline*}
V(n, \Omega_R (\mathbf{x})) = \int_{SO(3)} \bigl( Z (n, g\Omega_R) - \tilde\sigma (g\Omega_R)  N_n \bigr)^2  d\tilde\mu = \\ 
E[Z^2] - 2 \tilde\sigma(\Omega_R) N_n E[Z] + \bigl(\tilde\sigma(\Omega_R)  N_n \bigr)^2 = E[Z^2] - (\tilde\sigma(\Omega_R)  N_n)^2. 
\end{multline*} Next,
$$
E[Z^2] = \sum_{i=1}^{N_n} E[\xi_i^2] + \sum_{\substack{1 \les i, j \les N_n \\ i \ne j}} E[\xi_i \xi_j] = \tilde\sigma(\Omega_R) N_n + \tilde\sigma (\Omega_R)^2 N_n(N_n-1)
$$ since $\xi_i$ and $\xi_j$ are independent. Thus,
$$
V(n; \Omega_R) = \tilde\sigma(\Omega_R) N_n - \tilde\sigma (\Omega_R)^2 N_n \sim \frac{\sigma(\Omega_R)}{\sigma(S^2)} N_n 
$$ as soon as $\tilde\sigma(\Omega_R) = o(1)$.

\section{Variance Estimate. Proof of Theorem 1}
\label{sec3}

For completeness we first prove formula~\eqref{variance_first}. For fixed $n$ we choose a spherical ball $\Omega_R$ of spherical radius $R$ on the sphere of radius $\sqrt{n}$ and denote the area of the ball by $\sigma (\Omega_R)$. We will further need the notion of the point-pair invariant $K_R (\mathbf{x}, \mathbf{y})$. It can be defined as follows:
$$
K_R (\mathbf{x}, \mathbf{y}) =
\begin{cases}
\displaystyle 1,
\text{ if }
\displaystyle \mathbf{y} \in \Omega_R (\mathbf{x});
\\[2mm]
\displaystyle 0,
\text{ otherwise}.
\end{cases}
$$ Clearly it is a function of spherical distance between $\mathbf{x}$ and $\mathbf{y}$. The properties of this function and its Fourier expansion were described in details in~\cite{Lubotzky}. Here we follow their approach. 

The Fourier expansion has the form 
$$
K_R (\mathbf{x}, \mathbf{y}) = \sum_{m=0}^{+\infty} h_R (m) \sum_{j=1}^{2m+1} \phi_{j,m} (\mathbf{x}) \overline{\phi_{j,m} (\mathbf{y})},
$$ where $\phi_{j,m}$ form an orthonormal basis with the inner product
$$
\langle f, g \rangle = \int_{SO(3)} f(\mathbf{x}) \overline{g(\mathbf{x})}  d\mu(\mathbf{x}),
$$ and the coefficients $h_R (m)$ are given by Selberg-Harish-Chandra transform (see~\cite{Lubotzky} or~\cite{Humphries}):
$$
h_R (m) = 2 \pi \int_0^{\pi} P_m (\cos{\theta}) k_R (\cos{\theta}) \sin{\theta} d\theta.
$$ Here
$$
P_m (x) = \frac{1}{2^m m!} \frac{d^m}{dx^m} (x^2 - 1)^m
$$ are the Legendre polynomials, and
$$
k_R (\cos{\theta}) =
\begin{cases}
\displaystyle 1,
\text{if }
\displaystyle |\theta| \les R;
\\[2mm]
\displaystyle 0,
\text{otherwise}
\end{cases}
$$ is the point-pair invariant written in the spherical coordinates.

As in the previous section we denote the lattice points as $\mathbf{x}_1, \ldots, \mathbf{x}_{N_n}$. We next compute the variance as follows:
\begin{multline} \label{variance_expansion}
V\bigl( n, \Omega_R (\mathbf{x}) \bigr) = \int_{SO(3)} \biggl( \sum_{i=1}^{N_n} K_R (\mathbf{z}, \mathbf{x}_i) - \frac{\sigma(\Omega_R)}{\sigma(S^2)} N_n \biggr)^2  d\mu(\mathbf{z}) = \\ 
\int_{SO(3)} \biggl( \sum_{i=1}^{N_n} \sum_{m=0}^{+\infty} h_R (m) \sum_{j=1}^{2m+1} \phi_{j,m} (\mathbf{z}) \overline{\phi_{j,m} (\mathbf{x}_i)} - \frac{\sigma(\Omega_R)}{\sigma(S^2)} N_n \biggr)^2 d\mu(\mathbf{z}).
\end{multline} Note that
$$
\sum_{i=1}^{N_n} h_R (0) \phi_{0,0} (\mathbf{z}) \overline{\phi_{0,0}(\mathbf{x}_i)} = \frac{\sigma(\Omega_R)}{\sigma(S^2)} N_n.
$$ Indeed,
\begin{gather*}
h_R (0) = 2\pi \int_0^{\pi} k_R (\cos \theta) \sin \theta d\theta = 2\pi \int_0^{R} \sin \theta d\theta = 2\pi(1 - \cos R) = \sigma(\Omega_R), \\
\phi_{0,0} (\mathbf{z}) = \frac{1}{2\sqrt{\pi}}.
\end{gather*} Next, in order to be able to change the orders of summation and integration for the Fourier series we introduce the smooth version of point-pair invariant. It is given by a convolution $K_R \ast K_{\rho} (\mathbf{x}, \mathbf{y})$ for some small spherical radius $\rho$ such that $0 < \rho \ll R$. We denote the corresponding spherical ball by $\Omega_{\rho}$. We clearly have
$$
\frac{K_{R-\rho} \ast K_{\rho} (\mathbf{x}, \mathbf{y})}{\sigma(\Omega_{\rho})} \les K_R (\mathbf{x}, \mathbf{y}) \les \frac{K_{R+\rho} \ast K_{\rho} (\mathbf{x}, \mathbf{y})}{\sigma(\Omega_{\rho})}.
$$ Denote the right hand side and the left hand side of these inequalities as $\tilde K_{R+\rho}$ and $\tilde K_{R-\rho}$, and their Selberg-Harish-Chandra transforms by $\tilde h_{R+\rho}$ and $\tilde h_{R-\rho}$ correspondingly. Then
\begin{equation} \label{bounds_conv}
\tilde h_{R+\rho} (m) = \frac{h_{R+\rho} (m) h_{\rho}(m)}{\sigma(\Omega_{\rho})}, \qquad  \tilde h_{R-\rho} (m) = \frac{h_{R-\rho} (m) h_{\rho}(m)}{\sigma(\Omega_{\rho})}.
\end{equation}

By~\cite[Lemma~2.12]{Humphries} the Fourier expansion of the convolution for any $\rho_1$, $\rho_2 > 0$
$$
K_{\rho_1} \ast K_{\rho_2} (\mathbf{x}, \mathbf{y}) = \sum_{m=0}^{+\infty} h_{\rho_1}(m) h_{\rho_2} (m) \sum_{j=1}^{2m+1} \phi_{j,m} (\mathbf{x}) \overline{ \phi_{j,m} (\mathbf{y})}
$$ converges absolutely and uniformly. By~\eqref{variance_expansion} and the triangle inequality,
\begin{multline} \label{triangle_ineq}
V(n, \Omega_R (\mathbf{x})) \les \int_{SO(3)} \biggl( \sum_{i=1}^{N_n} \bigl( K_R (\mathbf{z}, \mathbf{x}_i) - \tilde K_{R+\rho} (\mathbf{z}, \mathbf{x}_i) \bigr) \biggr)^2  d\mu(\mathbf{z}) + \\
\int_{SO(3)} \biggl( \sum_{i=1}^{N_n} \tilde K_{R+\rho} (\mathbf{z}, \mathbf{x}_i) - \sigma(\Omega_{R+\rho}) N_n \biggr)^2  d\mu(\mathbf{z}) + \\
\int_{SO(3)} \biggl( \sigma(\Omega_{R+\rho}) N_n - \sigma(\Omega_R) N_n \biggr)^2 d\mu =:
V_1 + V_2 + V_3.
\end{multline}

The easiest term to estimate is $V_3$. We get:
\begin{equation} \label{V_3}
	V_3 \ll N_n^2 (\cos(R) - \cos(R+\rho)) \ll N_n^2 \rho R.
\end{equation}

Next, we estimate the contribution from $V_2$. Now since the Fourier series for $\tilde K_{R+\rho}$ converges absolutely and uniformly we can switch the order of summation and integration. Thus, we get
\begin{multline*}
	V_2 \les \sum_{m_1 = 1}^{+\infty} \sum_{m_2 = 1}^{+\infty} \tilde h_{R+\rho} (m_1) \tilde h_{R+\rho}(m_2) \sum_{j_1 = 1}^{2m_1 + 1} \sum_{j_2 = 1}^{2m_2 + 1}  W_{\phi_{j_1, m_1}} (n) \overline{ W_{\phi_{j_2, m_2}} (n)} \\
	\int_{SO(3)} \phi_{j_1, m_1} (\mathbf{z}) \overline{ \phi_{j_2, m_2} (\mathbf{z})} d\mu(\mathbf{z}),
\end{multline*} where
$$
W_{\phi_{j, m}} (n) = \sum_{i=1}^{N_n} \phi_{j, m} (\mathbf{x}_i).
$$ By the orthogonality of spherical harmonics we deduce
\begin{equation} \label{V_2}
V_2 \les \sum_{m=1}^{+\infty} \tilde h_{R+\rho}^2 (m) \sum_{j=1}^{2m+1} \bigl|W_{\phi_{j,m}} (n)\bigr|^2. 
\end{equation}

To estimate the contribution from $V_1$, we apply trivial inequality
$$
V_1 \les \int_{SO(3)} \biggl( \sum_{i=1}^{N_n} \bigl( \tilde K_{R-\rho} (\mathbf{z}, \mathbf{x}_i) - \tilde K_{R+\rho} (\mathbf{z}, \mathbf{x}_i) \bigr) \biggr)^2  d\mu(\mathbf{z}).
$$ Now using the Fourier expansions of $\tilde K_{R+\rho}$ and $K_{R-\rho}$ by the same argument we obtain
\begin{multline} \label{V_1}
	V_1 \les \int_{SO(3)} \biggl( \sigma(\Omega_{R+\rho}) N_n - \sigma(\Omega_{R-\rho}) N_n \biggr)^2 d\mu + \\
	\sum_{m=1}^{\infty} \bigl| \tilde h_{R+\rho} (m) - \tilde h_{R-\rho} (m) \bigr|^2 \sum_{j=1}^{2m+1} |W_{\phi_{j,m}} (n)|^2 \les
	N_n^2 (\rho R)^2 + \sum_{m=1}^{\infty} \tilde h_{R+\rho}^2 (m) \sum_{j=1}^{2m+1} |W_{\phi_{j,m}} (n)|^2
\end{multline} since clearly $|\tilde h_{R+\rho}(m) - \tilde h_{R-\rho} (m)|^2 \ll \tilde h_{R+\rho}^2 (m)$.

Thus, combining together~\eqref{triangle_ineq}, \eqref{V_3}, \eqref{V_2}, and \eqref{V_1}, we get
\begin{equation} \label{variance_ineq_2}
V(n, \Omega_R (\mathbf{x})) \les \sum_{m=1}^{+\infty} \tilde h_{R+\rho}^2 (m) \sum_{j=1}^{2m+1} \bigl|W_{\phi_{j,m}} (n)\bigr|^2 + O(N_n^2 \rho R).
\end{equation}

Now we move to the first moment of the product of $L$-functions. Recall that if $m$ is even and $\phi_{j,m}$ is invariant under the action of a certain subgroup, the Jacquet-Langlands lift gives a bijective correspondence between the basis of spherical harmonics $\phi_{j,m}$ of order $m$ and Hecke basis of holomorphic cusp forms of $\Gamma_0 (2)$ of weight $2m+2$, level 2, and trivial nebentypus (see~\cite[Lemma~2.16]{Humphries} and~\cite[Section~2]{Boch_Schu_Sarnak}). The remaining Weyl sums vanish. So we have
$$
\bigl|W_{\phi_{j,m}} (n)\bigr|^2 \les \frac{c\sqrt{n} L\bigl( \frac{1}{2}, f_{j,m} \bigr) L\bigl( \frac{1}{2}, f_{j,m} \otimes \chi_{-n} \bigr)}{L\bigl( 1, \Sym^2 f_{j,m} \bigr)}
$$ with an absolute constant $c > 0$ independent of $m$, $n$ and $\phi_{j,m}$. So we need to evaluate the expression
$$
\sum_{m=1}^{+\infty} \tilde h_{R+\rho}^2 (m) \sum_{j=1}^{2m+1} \frac{c\sqrt{n} L\bigl( \frac{1}{2}, f_{j,m} \bigr) L\bigl( \frac{1}{2}, f_{j,m} \otimes \chi_{-n} \bigr)}{L\bigl( 1, \Sym^2 f_{j,m} \bigr)}.
$$ We do it separately for small and large values of $m$. Let
\begin{equation} \label{def_M}
M := \frac{1}{\sqrt{\sigma(\Omega_R)}} \exp \biggl( -\frac{2 \log n}{\log \log n} \biggr).
\end{equation} 

First, consider the case $m \les M$. This case is easier since we only use pointwise bounds for the product of $L$-functions and Selberg-Harish-Chandra transform $\tilde h_{R+\rho}^2 (m)$. Applying the conditional upper bounds for $L(1/2, f)$, $L(1/2, f \otimes \chi_{-n})$~\cite[Corollary~1.1]{Chandee}, and conditional lower bounds for $L(1, \Sym^2 f)$~\cite{Hoffstein_Lockhart}, we get 
\begin{equation} \label{pointwise_product}
\frac{L\bigl( \frac{1}{2}, f_{j,m} \bigr) L\bigl( \frac{1}{2}, f_{j,m} \otimes \chi_{-n} \bigr)}{L\bigl( 1, \Sym^2 f_{j,m} \bigr)} \ll \exp\biggl(  \frac{\log (m^4 n^2)}{\log \log n} \biggr).
\end{equation} Next, we show that
$$
\tilde h_{R+\rho} (m) \ll \sigma(\Omega_R).
$$ By Hilb's formula (see \cite{Humphries} or \cite[Theorem~8.21.6]{Szego}) we have the following connection between polynomials $P_m (\cos \theta)$ and Bessel function $J_0 (x)$:
\begin{equation} \label{Hilb}
P_m (\cos \theta) = \sqrt{\frac{\theta}{\sin \theta}} J_0 \biggl( \biggl( m + \frac{1}{2} \biggr) \theta \biggr) + 
\begin{cases}
\displaystyle O(\theta^2) 
& \text{for } \displaystyle m \les \frac{1}{\theta},
\\[2mm]
\displaystyle O_{\varepsilon} \bigl( \frac{\sqrt{\theta}}{m^{3/2}} \bigr)
& \text{for } \displaystyle m \ges \frac{1}{\theta} \ges \frac{1}{\pi - \varepsilon}
\end{cases}
\end{equation} for fixed $\varepsilon > 0$, $0 < \theta < \pi - \varepsilon$. The function $J_0$ satisfies the asymptotic formula
\begin{equation} \label{Bessel}
J_0 (x) = 
\begin{cases}
\displaystyle 1 + O(x^2)
& \text{for } \displaystyle |x| \les 1,
\\[2mm]
\displaystyle \sqrt{\frac{2}{\pi |x|}} \cos \bigl( |x| - \frac{\pi}{4} \bigr) + O\bigl( \frac{1}{|x|^{3/2}} \bigr) 
& \text{for } \displaystyle |x| \ges 1.
\end{cases}
\end{equation} Since $m \ll (\sigma(\Omega_R))^{-1/2} \sim R^{-1}$, by~\eqref{Hilb} we have
\begin{multline*}
	h_R (m) = 2\pi \int_0^{R} P_m (\cos \theta) \sin \theta d\theta =
	\int_0^{R} \sqrt{\frac{\theta}{\sin \theta}} J_0 \biggl( \biggl( m + \frac{1}{2} \biggr) \theta \biggr) \sin \theta d\theta + \\
	O \biggl( \int_0^{R} \theta^2 \sin \theta d\theta \biggr),
\end{multline*} and since $(m + 1/2)\theta \les (M + 1/2) R \ll 1$, by~\eqref{Bessel}, we get
\begin{multline*}
h_R (m) \ll \int_0^{R} \sqrt{\theta \sin \theta} d\theta + O\biggl( \int_0^{R} m^2 \theta^2 \sqrt{\theta \sin \theta} d\theta \biggr) + O \biggl( \int_0^{R} \theta^2 \sin \theta d\theta \biggr) \ll \\ 
R^2 + O(m^2 R^4) + O(R^4) \ll \sigma(\Omega_R).
\end{multline*} Then by~\eqref{bounds_conv},
\begin{equation} \label{Chandra_bound}
\tilde h_{R+\rho} (m) = \frac{h_{R+\rho} (m) h_{\rho} (m)}{\sigma(\Omega_{\rho})} \ll |h_R (m)| \ll \sigma(\Omega_R).
\end{equation} Finally,~\eqref{pointwise_product} and~\eqref{Chandra_bound} give
\begin{multline} \label{first_part_bound}
\sum_{m \les M} \tilde h_{R+\rho}^2 (m) \sum_{j=1}^{2m+1} \frac{c\sqrt{n} L\bigl( \frac{1}{2}, f_{j,m} \bigr) L\bigl( \frac{1}{2}, f_{j,m} \otimes \chi_{-n} \bigr)}{L\bigl( 1, \Sym^2 f_{j,m} \bigr)} \ll \\ 
\sqrt{n} \exp \biggl( \frac{\log (M^4 n^2)}{\log \log n} \biggr) \sum_{m \les M} m\tilde h_{R+\rho}^2 (m) \ll  \sqrt{n} \exp \biggl( \frac{\log (M^4 n^2)}{\log \log n} \biggr) \sigma(\Omega_R)^2 M^2.
\end{multline}

Next, consider the case $m > M$. Now we estimate the product of $L$-functions on average over $j$, using the first moment bound given by Theorem~\ref{thm2}:
\begin{equation} \label{product_average}
\sum_{j=1}^{2m+1} \frac{ L\bigl( \frac{1}{2}, f_{j,m} \bigr) L\bigl( \frac{1}{2}, f_{j,m} \otimes \chi_{-n} \bigr)}{L\bigl( 1, \Sym^2 f_{j,m} \bigr)} \ll m L(1, \chi_{-n}) \exp \bigl( U(n,M)  \bigr).
\end{equation} We also need an average estimate for $\tilde h_{R+\rho}^2 (m)$. We show the following bound
\begin{equation} \label{Chandra_average}
\sum_{m=1}^{+\infty} m \tilde h_{R+\rho}^2 (m) \ll \sigma (\Omega_R).
\end{equation} Indeed,
\begin{multline*}
	\sigma(\Omega_{R+\rho} (\mathbf{x})) \gg \int_{SO(3)} \biggl( \tilde K_{R+\rho} (\mathbf{x}, \mathbf{z}) \biggr)^2 d\mu(\mathbf{z}) = \\
	\int_{SO(3)} \biggl( \sum_{m=0}^{+\infty} \tilde h_{R+\rho} (m) \sum_{j=1}^{2m+1} \phi_{j,m} (\mathbf{z}) \overline{\phi_{j,m} (\mathbf{x})} \biggr)^2 d\mu(\mathbf{z}) = \\ 
	\sum_{\substack{m_1, m_2 \\ j_1, j_2}} \tilde h_{R+\rho}(m_1) \tilde h_{R+\rho} (m_2) \phi_{j_1, m_1} (\mathbf{x}) \overline{ \phi_{j_2, m_2} (\mathbf{x})} \int_{SO(3)} \phi_{j_1, m_1} (\mathbf{z}) \overline{ \phi_{j_2, m_2} (\mathbf{z})} d\mu(\mathbf{z}) = \\ \sum_{m=0}^{+\infty} \tilde h_{R+\rho}^2 (m) \sum_{j=1}^{2m+1} |\phi_{j,m} (\mathbf{x})|^2 = \sum_{m=0}^{+\infty} \tilde h_{R+\rho}^2 (m) \frac{2m+1}{2\pi}.
\end{multline*} For the last identity see~\cite[formula~(2.11)]{Lubotzky}. Now~\eqref{Chandra_average} trivially follows from
$$
\sigma(\Omega_R (\mathbf{x})) \sim \sigma(\Omega_{R+\rho} (\mathbf{x})), \qquad \text{and} \qquad \sum_{m=0}^{+\infty} \tilde h_{R+\rho}^2 (m) \frac{2m+1}{2\pi} \gg \sum_{m=0}^{+\infty} \tilde h_{R+\rho}^2 (m) m.
$$

Combining together~\eqref{first_part_bound}. \eqref{product_average}, and~\eqref{Chandra_average}, we get
\begin{multline*}
\sum_{m = 1}^{+\infty} \tilde h_{R+\rho}^2 (m) \sum_{j=1}^{2m+1} \frac{c\sqrt{n} L\bigl( \frac{1}{2}, f_{j,m} \bigr) L\bigl( \frac{1}{2}, f_{j,m} \otimes \chi_{-n} \bigr)}{L\bigl( 1, \Sym^2 f_{j,m} \bigr)} \ll \\ \sqrt{n} \exp \biggl( \frac{\log (M^4 n^2)}{\log \log n} \biggr) \sigma(\Omega_R)^2 M^2 + 2\sqrt{n} L(1, \chi_{-n}) \exp \bigl( U(n,M) \bigr) \sigma(\Omega_R).
\end{multline*} Hence, by~\eqref{variance_ineq_2},
\begin{multline*}
	V(n, \Omega_R) \ll \sqrt{n} \sigma(\Omega_R) \biggl( \sigma(\Omega_R) M^2 \exp \biggl( \frac{\log (M^4 n^2)}{\log \log n}  \biggr) + L(1, \chi_{-n}) \exp\bigl( U(n, M)  ) \biggr) + \\
	O(N_n^2 \rho R).
\end{multline*} With the choice of $M$ given by~\eqref{def_M} and the restriction $(\sigma(\Omega_R))^{-1} \ll \sqrt{n}$ we have
$$
\sigma(\Omega_R) M^2 \exp \biggl( \frac{\log (M^4 n^2)}{\log \log n}  \biggr) \ll \exp\biggl( -\frac{\log n}{\log \log n} \biggr).
$$ Next, note that since $\sigma(\Omega_R) = N_n^{-\delta}$ for some $\delta \in (0, 1)$, the bound of Theorem~\ref{thm2} gives $U(n, M) \asymp 1$. Choosing $\rho$ sufficiently small we finally get
$$
V(n, \Omega_R) \ll \sqrt{n} \sigma(\Omega_R) L(1, \chi_{-n}) \ll \sigma(\Omega_R) N_n
$$ as desired.

\section{Moment Estimate. Main Ideas}
\label{sec4}

In this section, we give a sketch of the proof of Theorem~\ref{thm2} using the ideas of Soundararajan~\cite{Sound} and Harper~\cite{Harper}. The main goal in their works was to obtain a sharp upper bound for the moments of the Riemann zeta function
$$
M_k (T) = \int_0^T \bigl|\zeta(\frac{1}{2} + it)\bigr|^{2k} dt
$$ for all positive real $k$ assuming the Riemann Hypothesis. In~\cite{Sound} it was shown that $M_k (T) \ll_{k, \varepsilon} T(\log T)^{k^2 + \varepsilon}$ for any $\varepsilon > 0$. In~\cite{Harper} this estimate was sharpened to $M_k (T) \ll_k T(\log T)^{k^2}$.  

The idea of Soundararajan's work goes back to Selberg who studied the distribution of $\log \zeta(1/2 + it)$. His method works very well for imaginary part of the logarithm but leads to complications in the case of the real part because of zeros lying very close to the critical line. To get an upper bound for $\log |\zeta(1/2 + it)|$, one does not need to explore the contribution from zeros since it is essentially negative. The following upper bound (see the main Proposition in~\cite{Sound}) holds true on RH:
\begin{equation} \label{log_zeta}
\log \bigl| \zeta(\frac{1}{2} + it) \bigr| \les \text{Re} \sum_{n \les x} \frac{\Lambda(n)}{n^{1/2 + it} \log(n)} + \frac{3}{4} \frac{\log T}{\log x} + R(x)
\end{equation} for any $t \in [T, 2T]$, $2 \les x \les T^2$. Here $R(x)$ corresponds to lower-order terms. The expression for $M_k (T)$ can be rewritten as
$$
\int_T^{2T} \bigl| \zeta(\frac{1}{2} + it) \bigr|^{2k} dt = 2k \int_{-\infty}^{+\infty} e^{2kV} \meas \biggl\{ t \in [T, 2T] : \log \bigl|\zeta(\frac{1}{2} + it)\bigr| \ges V  \biggr\} dV.
$$ In order to get an upper bound  for $M_k (T)$ one needs to obtain an appropriate upper bound for the measure inside the integral. This measure depends on the size of~$V$. This was achieved in Soundararajan's work by exploring the joint behaviour of short and long Dirichlet polynomials
$$
\sum_{p \les z} \frac{1}{p^{1/2 + it}}, \qquad \sum_{z < p \les x} \frac{1}{p^{1/2 + it}}
$$ as $t$ varies between $T$ and $2T$. Rougly speaking it was shown that for most $t$ the absolute values of the real parts of these polynomials cannot be too large. 

A slightly different approach leading to a sharper upper bound was elaborated in Harper's work. The long Dirichlet polynomial splits into the sum of many shorter ones of the form
$$
\sum_{T^{\beta_{i-1}} < p \les T^{\beta_i}} \frac{1}{p^{1/2 + it}}, \qquad 0 = \beta_0 < \beta_1 < \ldots < \beta_{I-1} < \beta_I,
$$ and the better bound is obtained by exploring the joint distribution of the real parts of these polynomials. 

Both methods work well in the case of more general class of $L$-functions. Some variations of these methods were applied to products of automorphic $L$-fucntions~\cite{Milinovich} and averages over fundmental discriminants of central values of quadratic twists~\cite{Sono}. 

We will follow the Harper's approach to prove Theorem~\ref{thm2}. The expression analogous to the right hand side of~\eqref{log_zeta} can be also obtained for a general class of $L$-functions (see~\cite{Chandee} and~Lemma~\ref{lem1} below). This expression can also be written as a sum of a number of Dirichlet polynomials of the form
$$
\sum_{x^{1/3} < p \les x} \frac{\lambda_f(p)}{\sqrt{p}} + \sum_{x^{1/9} < p \les x^{1/3}} \frac{\lambda_f (p)}{\sqrt{p}} + \ldots + \sum_{x^{3^{-I}} < p \les x^{3^{1-I}}} \frac{\lambda_f (p)}{\sqrt{p}} + \sum_{1 < p \les x^{3^{-I}}} \frac{\lambda_f (p)}{\sqrt{p}}.
$$ Here $f \in S_{2m+2} (\Gamma_0 (2))$ and its Fourier coefficients normalized so that $\lambda_f (1) = 1$. More precisely, we will estimate the first moment as 
\begin{multline} \label{expression_for_moment}
\mathop{{\sum}^{h}}_{f \in S_{2m+2} ( \Gamma_0 (2) )} \exp \bigl( \log \bigl| L_1 (f) L_2 (f) \bigr| \bigr) = \\ 
\mathop{{\sum}^{h}}_{f \in S_{2m+2} ( \Gamma_0 (2) )} \exp \biggl( \sum_{1 < p \les x^{3^{-I}}} \frac{\lambda_f (p)}{\sqrt{p}} \biggr) \prod_{i=1}^I \exp \biggl( \sum_{x^{3^{-i}} < p \les x^{3^{1-i}}} \frac{\lambda_f (p)}{\sqrt{p}} \biggr) + R_m (X),
\end{multline} where there precise form of the error term $R_m (x)$ is given in Lemma~\ref{lem1}. Here we use the notation 
\begin{gather*}
L_1 (f) := L \bigl(\frac{1}{2}, f\bigr), \qquad L_2 (f) := L\bigl(\frac{1}{2}, f \otimes \chi_{-d}\bigr), \\
\mathop{{\sum}^{h}}_{f \in S_{2m+2} ( \Gamma_0 (2) )} (.) := \sum_{f \in S_{2m+2}(\Gamma_0 (2))} \frac{(.)}{L(1, \Sym^2 f)}.
\end{gather*}

We will show that for most cusp forms $f \in S_{2m+2} (\Gamma_0 (2))$ one can obtain an appropriate upper bound directly analyzing the truncated Taylor series of each factor $\exp ( \ldots )$. The main tool for getting sharp upper bounds is a multidimensional analogue of Petersson trace formula, which gives the asymptotic formula for the sums of the form
$$
\mathop{{\sum}^{h}}_{f \in S_{2m+2} ( \Gamma_0 (2) )} \lambda_f (p_1) \ldots \lambda_f (p_r).
$$ The error term in the Taylor expansion can only be large for a small amount of cusp forms $f$. We show that the contribution from these ``exceptional'' forms to~\eqref{expression_for_moment} is small using Rankin's trick and Markov's inequality. The details are explicated in Section~\ref{sec6}.

For further convenience we change the notation for the weight of the cusp forms $k := 2m+2$ and the modulus of Dirichlet characters $d : = n$.

\section{Moment Estimate. Auxiliary Lemmas}
\label{sec5}

To get the bound of Theorem~\ref{thm2}, we need two key ingredients, the inequality analogous to~\eqref{log_zeta} for $GL(2)$ $L$-functions and the analogue of mean-value estimate for the Fourier coefficients of the corresponding holomorphic cusp forms. The first ingredient is given by

\begin{lemma} \label{lem1}
	Assume the Grand Riemann Hypothesis and suppose $f \in S_k ( \Gamma_0 (2) )$ is a holomorphic Hecke cusp form. Then for any $x \ges 2$ we have
	\begin{multline*}
	\log \bigl| L_1 (f) L_2 (f) \bigr| \les \sum_{p \les x} \frac{\lambda_f (p) [1 + \chi_{-d} (p)]}{\sqrt{p}} W (p, x) + \\
	\sum_{p \les \sqrt{x}} \frac{\lambda_f (p^2) - 1}{p} W (p^2, x) + c_0 + \frac{3}{4} \frac{\log (d^2 k^4)}{\log x},
	\end{multline*} where $c_0 \les 2$ is an absolute constant,
	\begin{gather*}
	W (n, x) = n^{-\lambda_0 / \log x} \frac{\log (x/n)}{\log x}, \\
	e^{-\lambda_0} = \lambda_0 + \frac{\lambda_0^2}{2} \Longrightarrow \lambda_0 = 0.4912\ldots .
	\end{gather*}
\end{lemma}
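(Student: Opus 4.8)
The plan is to realize $L_1(f)L_2(f)$ as the value at $s=\tfrac12$ of a single Euler product and feed that object into the explicit-formula inequality of Soundararajan~\cite{Sound} in the refined shape of Chandee~\cite{Chandee}; the rest is bookkeeping of local factors and of the analytic conductor. First I would set $L(s,\pi):=L(s,f)\,L(s,f\otimes\chi_{-d})$. Since $f$ and $f\otimes\chi_{-d}$ are cuspidal, each factor is entire, so $L(s,\pi)$ is entire of order $1$, self-dual, with a functional equation $s\mapsto 1-s$; it has degree $4$, arithmetic conductor $\asymp 2\cdot d^{2}$ (level $2$ for $f$, level $\asymp d^{2}$ for $f\otimes\chi_{-d}$), and hence, incorporating the weight-$k$ gamma factors, analytic conductor $\mathfrak{q}(\pi)\asymp d^{2}k^{4}$. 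From the Euler product, $-\tfrac{L'}{L}(s,\pi)=\sum_{n\ge1}\Lambda_\pi(n)n^{-s}$ with $\Lambda_\pi$ supported on prime powers, and using $\alpha_p\beta_p=1$ together with $\alpha_p^{\ell}+\beta_p^{\ell}=\lambda_f(p^{\ell})-\lambda_f(p^{\ell-2})$ one gets, for $p\nmid 2d$,
\[
\Lambda_\pi(p)=\lambda_f(p)\,[1+\chi_{-d}(p)]\log p,\qquad \Lambda_\pi(p^{2})=(\lambda_f(p^{2})-1)\,[1+\chi_{-d}(p^{2})]\log p,
\]
while $|\Lambda_\pi(p^{\ell})|\le 4\log p$ for $\ell\ge 3$ by Deligne's bound $|\alpha_p|=|\beta_p|=1$; the finitely many ramified Euler factors at $p\mid 2d$ have degree $\le 2$ and will be controlled directly.

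Next I would invoke the Soundararajan--Chandee bound: under GRH for $L(s,\pi)$ — which is equivalent to GRH for $f$ and its quadratic twist, hence covered by the hypotheses — one has, for every $x\ge 2$,
\[
\log|L(\tfrac12,\pi)|\le \sum_{n\le x}\frac{\Lambda_\pi(n)}{n^{1/2+\lambda_0/\log x}\,\log n}\cdot\frac{\log(x/n)}{\log x}\;+\;\frac{1+\lambda_0}{2}\cdot\frac{\log\mathfrak{q}(\pi)}{\log x}\;+\;O\!\Big(\frac{1}{\log x}\Big),
\]
with $\lambda_0=0.4912\ldots$ the root of $e^{-\lambda_0}=\lambda_0+\lambda_0^{2}/2$ and $\tfrac{1+\lambda_0}{2}=0.745\ldots<\tfrac34$. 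This is the one place GRH is used, in the usual manner: writing $\log|L(\tfrac12,\pi)|$ through the Hadamard product and a Mellin kernel that produces the weight $\log(x/n)/\log x$, one moves the contour across $\mathrm{Re}(s)=\tfrac12$; on GRH every contribution $\mathrm{Re}\,\tfrac1{s-\rho}$ of a nontrivial zero with $\mathrm{Re}(s)>\tfrac12$ is $\ge 0$, so the zero sum may be discarded after being bounded by $\tfrac12\log\mathfrak{q}(\pi)$ plus gamma-factor terms, and $\lambda_0$ is the value of the free parameter minimizing the resulting constant (exactly as in~\cite{Sound, Chandee}).

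Finally I would match terms. The kernel factor $n^{-\lambda_0/\log x}\,\log(x/n)/\log x$ is precisely $W(n,x)$, so the $n=p$ terms give $\sum_{p\le x}\tfrac{\lambda_f(p)[1+\chi_{-d}(p)]}{\sqrt p}W(p,x)$, and the $n=p^{2}$ terms, after dividing by $\log(p^{2})=2\log p$ and using $\tfrac12[1+\chi_{-d}(p^{2})]=1$ for $p\nmid d$, give $\sum_{p\le\sqrt x}\tfrac{\lambda_f(p^{2})-1}{p}W(p^{2},x)$. The terms with $\ell\ge 3$ contribute $\le\sum_p\sum_{\ell\ge3}\tfrac{4}{\ell p^{\ell/2}}\ll 1$, and together with the ramified primes $p\mid 2d$, the $O(1/\log x)$, and the gap between $\tfrac{1+\lambda_0}{2}\log\mathfrak q(\pi)$ and $\tfrac34\log(d^{2}k^{4})$ (a bounded quantity once the implied constants in $\mathfrak q(\pi)\asymp d^{2}k^{4}$ are fixed, since $\tfrac{1+\lambda_0}{2}<\tfrac34$ leaves a positive multiple of $\log(d^2k^4)$ to spare) these are absorbed into $c_0\le 2$. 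Whenever $L(\tfrac12,f)=0$ or $L(\tfrac12,f\otimes\chi_{-d})=0$ the left side is $-\infty$ and the inequality is vacuous, so no case is lost.

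I expect the only substantive ingredient to be the Soundararajan--Chandee inequality itself, which I would import rather than reprove; everything downstream is routine, and the points that genuinely demand care are (i) that the parameter optimization really yields the coefficient $\tfrac34$ claimed in the statement, and (ii) that all the small-prime, ramified-prime (in particular the slight overcount of the $p^{2}$-coefficient at $p\mid d$) and error contributions truly fit inside the absolute constant $c_0\le 2$.
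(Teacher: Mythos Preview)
Your proposal is correct and matches the paper's own treatment: the paper does not give an independent proof of this lemma but simply writes ``For the proof see~\cite[Theorem~2.1]{Chandee},'' and what you have outlined is precisely the specialization of Chandee's general inequality to the degree-$4$ $L$-function $L(s,f)L(s,f\otimes\chi_{-d})$, together with the routine identification of the Dirichlet coefficients and the analytic conductor. Your bookkeeping of the $p$, $p^2$, and higher prime-power terms, the conductor $\mathfrak q(\pi)\asymp d^2k^4$, and the absorption of the slack $\tfrac34-\tfrac{1+\lambda_0}{2}>0$ and ramified contributions into $c_0$ are all sound.
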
 For the proof see~\cite[Theorem~2.1]{Chandee}. \\

The second key ingredient is the multidimensional analogue of Petersson's trace formula. We first state the classical version:

\begin{lemma}[\textit{Petersson's formula}] \label{lem2} Let $k$ be fixed positive integer, and $m$ and $n$ are natural numbers with $mn \les k^2 / 10^4$. Then
	$$
	\mathop{{\sum}^{h}}_{f \in S_k ( \Gamma_0 (2) )} \lambda_f (n) \lambda_f (m) = \frac{k-1}{2\pi^2} \mathbf{1}_{n=m} + E_k, \qquad E_k \les c_1 k e^{-k},
	$$ where $c_1$ is an absolute constant and
	$$
	\mathop{{\sum}^{h}}_{f \in S_k ( \Gamma_0 (2) )} \lambda_f (n) \lambda_f (m) := \sum_{f \in S_k ( \Gamma_0 (2) )} \frac{1}{L(1, \Sym^2 f)} \lambda_f(n) \lambda_f (m).
	$$
\end{lemma}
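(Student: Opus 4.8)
The plan is to start from the exact Petersson trace formula for the weight-$k$ level-$2$ space, which expresses the harmonic average $\sum^h_{f} \lambda_f(m)\lambda_f(n)$ (normalized by $L(1,\Sym^2 f)$, so that the usual arithmetic weights $\Gamma(k-1)/(4\pi\sqrt{mn})^{k-1}$ become, up to the explicit constant $(k-1)/(2\pi^2)$, just the identity contribution) as a main term $\frac{k-1}{2\pi^2}\mathbf{1}_{m=n}$ plus a sum of Kloosterman terms
$$
\frac{k-1}{2\pi^2}\cdot 2\pi i^{-k}\sum_{c \equiv 0 \pmod 2} \frac{S(m,n;c)}{c}\, J_{k-1}\!\left(\frac{4\pi\sqrt{mn}}{c}\right).
$$
The entire content of the lemma is thus the bound $E_k \les c_1 k e^{-k}$ on this Kloosterman term under the hypothesis $mn \les k^2/10^4$. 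First I would recall the classical uniform bound for the $J$-Bessel function of large order, namely $J_{k-1}(x) \ll \frac{1}{\Gamma(k)}\left(\frac{x}{2}\right)^{k-1}$ for all $x > 0$ (this follows directly from the power-series definition, every term being positive and dominated by the first), which is the form best suited here because the argument $\frac{4\pi\sqrt{mn}}{c}$ is \emph{small} once $mn \les k^2/10^4$ and $c \ges 2$.

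Next I would insert this into the Kloosterman sum: using the trivial bound $|S(m,n;c)| \les c$, the $c$-sum becomes $\sum_{c\ges 2} \frac{1}{\Gamma(k)}\left(\frac{2\pi\sqrt{mn}}{c}\right)^{k-1} \ll \frac{1}{\Gamma(k)}\left(2\pi\sqrt{mn}\right)^{k-1}\sum_{c\ges 2} c^{-(k-1)}$, and the $c$-series is $O(2^{-(k-1)})$ for $k \ges 3$. With the overall factor $\frac{k-1}{2\pi^2}\cdot 2\pi$ out front, one collects
$$
|E_k| \ll \frac{k}{\Gamma(k)}\left(\pi\sqrt{mn}\right)^{k-1}.
$$
Now the hypothesis $\sqrt{mn} \les k/100$ gives $\left(\pi\sqrt{mn}\right)^{k-1} \les \left(\pi k/100\right)^{k-1}$, and combining with Stirling's formula $\Gamma(k) = (k-1)! \gg \sqrt{k}\,(k/e)^{k-1}$ yields a ratio bounded by $\frac{k}{\sqrt k}\left(\frac{\pi e}{100}\right)^{k-1}$. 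Since $\pi e/100 < e^{-1}$ (indeed $\pi e \approx 8.54 < 100/e \approx 36.8$), we get $|E_k| \ll \sqrt{k}\, e^{-(k-1)} \ll c_1 k e^{-k}$ with an absolute constant $c_1$, and one absorbs the small-$k$ cases (where $k$ is bounded and $mn$ correspondingly bounded, so the whole sum is $O(1)$) into the constant.

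I do not expect a genuine obstacle here; the lemma is essentially bookkeeping around the classical Petersson formula, and the factor $10^4$ in the hypothesis is precisely the slack that makes the geometric decay in $c$ and the Stirling gain in $\Gamma(k)$ both comfortable. The one point requiring a little care is the normalization: one must verify that dividing by $L(1,\Sym^2 f)$ is exactly what converts the Petersson arithmetic weights $\omega_f = \Gamma(k-1)/((4\pi)^{k-1}\langle f,f\rangle)$ into the clean weights $1/L(1,\Sym^2 f)$ with main-term constant $(k-1)/(2\pi^2)$ — this is the standard relation between $\langle f,f\rangle$ and $L(1,\Sym^2 f)$ at level $2$, and it is where the level-dependent constant $2\pi^2$ (rather than, say, $\pi^2$ at level $1$) enters. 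Everything else is a one-line Bessel estimate followed by Stirling.
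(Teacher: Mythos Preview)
Your proposal is correct and follows the standard route: the paper does not give its own proof but simply cites \cite[Lemma~2.1]{Rudnick_Sound}, and your argument (Petersson trace formula, the power-series bound $J_{k-1}(x)\le (x/2)^{k-1}/\Gamma(k)$, trivial Kloosterman bound, then Stirling with the hypothesis $\sqrt{mn}\le k/100$) is exactly the computation carried out there. The only cosmetic point is that the normalization giving the constant $(k-1)/(2\pi^2)$ at level~$2$ is taken for granted in the paper via the citation, just as you assume it; otherwise there is nothing to add.
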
 For the proof see~\cite[Lemma~2.1]{Rudnick_Sound}. \\

\begin{lemma}[\textit{multidimensional Petersson's formula}] \label{lem3} 
	Suppose $N = p_1^{\beta_1} \ldots p_r^{\beta_r} \les \sqrt{k}$, where $p_1$, $\ldots$, $p_r$ are distinct primes. Then
	$$ 
	1) \mathop{{\sum}^{h}}_{f \in S_k ( \Gamma_0 (2) )} \prod_{j=1}^r {\lambda_f} (p_j)^{\beta_j} [1 + \chi_{-d} (p_j)]^{\beta_j} = F(N) + E(N),
	$$ where
	$$
	F(N) =
	\begin{cases} 
	\displaystyle 0, 
	\text{ if }
	\displaystyle \beta_j 
	\text{ is odd for at least one j};
	\\[2mm]
	\displaystyle \frac{k-1}{2\pi^2} \prod_{j=1}^r (1+\chi_{-d} (p_j))^{\beta_j} \frac{2}{\beta_j + 2} \binom{\beta_j}{\beta_j / 2}, 
	\text{ otherwise}, 
	\end{cases}
	$$ and
	\begin{gather*}
	|E(N)| \les c_1 4^{\beta_1 + \ldots + \beta_r} \bigl( \prod_{j=1}^r \beta_j \bigr)  k e^{-k}. \\
	2) \mathop{{\sum}^{h}}_{f \in S_k ( \Gamma_0 (2) )} \prod_{j=1}^r {\lambda_f} (p_j^2)^{\beta_j} = F_2 (N) + E_2 (N),  
	\end{gather*} where
	\begin{gather*}
	F_2 (N) = \frac{k-1}{2\pi^2} \prod_{j=1}^r \sum_{k_j = 0}^{\beta_j} (-1)^{k_j} \binom{\beta_j}{k_j} \binom{k_j}{\lfloor k_j / 2 \rfloor}, \\
	|E_2 (N)| \les c_1 \bigl( \prod_{j=1}^r \beta_j^2 \bigr) 4^{\beta_1 + \ldots + \beta_r + r} k e^{-k} .
	\end{gather*}
\end{lemma}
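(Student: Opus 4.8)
The plan is to deduce both identities from the classical Petersson formula (Lemma~\ref{lem2}) by first reducing powers of Hecke eigenvalues at a single prime to linear combinations of Hecke eigenvalues at prime powers, and then using multiplicativity $\lambda_f(p_1^{\ell_1}\cdots p_r^{\ell_r})=\prod_j\lambda_f(p_j^{\ell_j})$ of the (normalised) eigenvalues. The first ingredient is a purely combinatorial expansion. Since $f$ is a Hecke eigenform, $\lambda_f(p)\lambda_f(p^\ell)=\lambda_f(p^{\ell+1})+\lambda_f(p^{\ell-1})$, and iterating this (equivalently, writing $\lambda_f(p)=2\cos\theta_p$ and expanding $(2\cos\theta)^\beta$ in Chebyshev polynomials of the second kind $U_\ell(\cos\theta)=\lambda_f(p^\ell)$) gives, by induction on $\beta$,
$$\lambda_f(p)^\beta=\sum_{0\le\ell\le\beta}a_{\beta,\ell}\,\lambda_f(p^\ell),\qquad a_{\beta,\beta-2m}=\binom{\beta}{m}-\binom{\beta}{m-1}\quad(0\le m\le\lfloor\beta/2\rfloor),$$
with $a_{\beta,\ell}=0$ unless $\ell\equiv\beta\pmod 2$. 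These ballot numbers are non-negative integers, $\sum_\ell a_{\beta,\ell}=\binom{\beta}{\lfloor\beta/2\rfloor}\le 2^\beta$, and the key value is $a_{\beta,0}=\mathbf 1_{2\mid\beta}\,\frac{2}{\beta+2}\binom{\beta}{\beta/2}$, a Catalan number which vanishes when $\beta$ is odd.

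For part 1, multiply the expansion over $j$ and use multiplicativity:
$$\prod_{j=1}^r\lambda_f(p_j)^{\beta_j}[1+\chi_{-d}(p_j)]^{\beta_j}=\sum_{\ell_1,\ldots,\ell_r}\Big(\prod_{j=1}^r[1+\chi_{-d}(p_j)]^{\beta_j}a_{\beta_j,\ell_j}\Big)\lambda_f\big(p_1^{\ell_1}\cdots p_r^{\ell_r}\big).$$
Each argument $M:=p_1^{\ell_1}\cdots p_r^{\ell_r}\le N\le\sqrt k$, so $M\cdot 1\le k^2/10^4$ and Lemma~\ref{lem2} applies with $(n,m)=(M,1)$, giving $\mathop{{\sum}^{h}}_{f}\lambda_f(M)=\tfrac{k-1}{2\pi^2}\mathbf 1_{M=1}+O(ke^{-k})$. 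The delta term survives only for $\ell_1=\cdots=\ell_r=0$, which forces every $\beta_j$ even (otherwise $a_{\beta_j,0}=0$), and then contributes $\tfrac{k-1}{2\pi^2}\prod_j[1+\chi_{-d}(p_j)]^{\beta_j}a_{\beta_j,0}=F(N)$. The remaining terms make up $E(N)$, and $|E(N)|\le c_1ke^{-k}\sum_{\vec\ell}\prod_j[1+\chi_{-d}(p_j)]^{\beta_j}a_{\beta_j,\ell_j}$; bounding $[1+\chi_{-d}(p_j)]^{\beta_j}\le 2^{\beta_j}$, the number of admissible $\ell_j$ by $\beta_j$, and $a_{\beta_j,\ell_j}\le\binom{\beta_j}{\lfloor\beta_j/2\rfloor}\le 2^{\beta_j}$ yields the stated estimate.

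For part 2, use $\lambda_f(p^2)=\lambda_f(p)^2-1$, so $\lambda_f(p^2)^\beta=\sum_{k'=0}^\beta(-1)^{\beta-k'}\binom{\beta}{k'}\lambda_f(p)^{2k'}$; substituting the Step~1 expansion gives $\lambda_f(p^2)^\beta=\sum_{\ell\text{ even},\,\ell\le2\beta}b_{\beta,\ell}\lambda_f(p^\ell)$ with $b_{\beta,\ell}=\sum_{k'}(-1)^{\beta-k'}\binom{\beta}{k'}a_{2k',\ell}$. The crux is the value $b_{\beta,0}$: writing $\lambda_f(p^2)=\alpha^2+1+\alpha^{-2}$ with $\alpha=e^{i\theta_p}$, and noting that a symmetric Laurent polynomial $\sum_s c_s\alpha^s$ has coefficient $c_0-c_2$ on $\lambda_f(p^0)=1$ in the basis $\{\lambda_f(p^\ell)\}$ (since there $b_\ell=c_\ell-c_{\ell+2}$), one computes $b_{\beta,0}=\sum_{k'=0}^\beta(-1)^{k'}\binom{\beta}{k'}\binom{k'}{\lfloor k'/2\rfloor}$. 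Now repeat the previous paragraph verbatim: multiplicativity, $M=p_1^{\ell_1}\cdots p_r^{\ell_r}\le N^2\le k$, Lemma~\ref{lem2}, the delta term at $\vec\ell=\vec 0$ equal to $\tfrac{k-1}{2\pi^2}\prod_j b_{\beta_j,0}=F_2(N)$, and $E_2(N)$ controlled by elementary bounds on $\sum_\ell|b_{\beta,\ell}|$ (which is $\le 2\cdot 3^\beta$, e.g. from $b_\ell=c_\ell-c_{\ell+2}$ with non-negative $c_s$ summing to $3^\beta$) and on the $O(\beta_j^2)$ summands per coordinate.

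The main obstacle is combinatorial bookkeeping rather than anything analytic: establishing the ballot-number expansion of Step~1 with the correct $a_{\beta,0}$, and — a little more delicately — recognising the part-2 main-term coefficient in the closed form $\sum_k(-1)^k\binom{\beta}{k}\binom{k}{\lfloor k/2\rfloor}$ rather than the a priori Catalan-type expression one gets by naively composing the two expansions. One should also watch the constraint under which Lemma~\ref{lem2} is valid, namely $M\le k^2/10^4$; this holds automatically because $M\le N^2\le k$ (with the finitely many small $k$ absorbed into $c_1$), and this is precisely why the hypothesis $N\le\sqrt k$ is imposed. Finally, the error estimates should be kept crude but clean, since in Section~\ref{sec6} they are fed into a Taylor-expansion/Rankin-trick argument where the dependence on $\beta_1,\ldots,\beta_r$ only needs to be at most exponential.
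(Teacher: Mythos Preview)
Your proof is correct and follows essentially the same strategy as the paper: expand $\lambda_f(p)^\beta$ (resp.\ $\lambda_f(p^2)^\beta$) in the basis $\{\lambda_f(p^\ell)\}$ via the $\alpha_p$-parametrisation, use multiplicativity to reduce to $\lambda_f(M)$ with $M\le N^2\le k$, and apply Lemma~\ref{lem2} termwise. Your treatment of part~2 is slightly cleaner --- the observation $b_\ell=c_\ell-c_{\ell+2}$ for the trinomial coefficients $c_s$ gives the main term $\sum_l(-1)^l\binom{\beta}{l}\binom{l}{\lfloor l/2\rfloor}$ in one line, whereas the paper unpacks the Laurent expansion more laboriously --- and your error bound $\sum_\ell|b_{\beta,\ell}|\le 2\cdot 3^\beta$ is in fact sharper than the paper's $O(\beta^2 4^{\beta+1})$, but these are cosmetic differences within the same argument.
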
 

\begin{proof}
	
	First, we obtain Hecke relations
	$$
	\lambda_f (p)^n = \lambda_f (p^n) + \sum_{i=1}^{n/2} \biggl[ \binom{n}{i} - \binom{n}{i-1} \biggr] \lambda_f (p^{n - 2i}) 
	$$ if $n$ is even and
	$$
	\lambda_f (p)^n = \lambda_f (p^n) + \sum_{i=1}^{(n-1) / 2} \biggl[ \binom{n}{i} - \binom{n}{i-1} \biggr] \lambda_f (p^{n - 2i}) 
	$$ if $n$ is odd. From the Euler product
	$$
	\prod_p \biggl( 1 + \frac{\lambda_f (p)}{p^s} + \frac{\lambda_f (p^2)}{p^{2s}} + \ldots \biggr) = \prod_p \biggl( 1 + \frac{\alpha_p}{p^s} + \frac{\alpha_p^2}{p^{2s}} + \ldots \biggr) \biggl( 1 + \frac{\alpha_p^{-1}}{p^s} + \frac{\alpha_p^{-2}}{p^{2s}} + \ldots \biggr)
	$$ we get the identity
	\begin{equation} \label{identity_lambda}
	\lambda_f (p^{n-1}) = \frac{\alpha_p^n - \alpha_p^{-n}}{\alpha_p - \alpha_p^{-1}},
	\end{equation} and, in particular, $\lambda_f (p) = \alpha_p + \alpha_p^{-1}$. Next, we have
	\begin{multline*}
	\lambda_f (p)^n = (\alpha_p + {\alpha_p}^{-1})^n = \frac{(\alpha_p + {\alpha_p}^{-1})^n (\alpha_p - {\alpha_p}^{-1})}{\alpha_p - {\alpha_p}^{-1}} = \\ 
	\frac{\bigl(\alpha_p^n + n \alpha_p^{n-2} + \binom{n}{2} \alpha_p^{n-4} + \ldots + \binom{n}{i} \alpha_p^{n-2i} + \ldots + n \alpha_p^{2-n} + \alpha_p^{-n} \bigr) (\alpha_p - \alpha_p^{-1})}{\alpha_p - {\alpha_p}^{-1}}.
	\end{multline*} Opening the brackets in the numerator and applying~\eqref{identity_lambda} to each of the expressions
	$$
	\frac{\binom{n}{i} \alpha_p^{n-2i+1} - \binom{n}{i} \alpha_p^{-(n-2i+1)}}{\alpha_p - \alpha_p^{-1}},
	$$ we get the identity
	\begin{multline} \label{the_identity}
		\lambda_f (p)^n = \lambda_f (p^n) + [ n-1 ] \lambda_f (p^{n-2}) + \biggl[ \binom{n}{2} - n \biggr] \lambda_f (p^{n-4}) + \ldots \\
		+ \biggl[ \binom{n}{i} - \binom{n}{i-1} \biggr] \lambda_f (p^{n - 2i}) + \ldots .
	\end{multline} 
	
	First, assume $n$ is even. Then using~\eqref{the_identity} we obtain
	\begin{multline} \label{expansion_lambda}
	\mathop{{\sum}^{h}}_{f \in S_k ( \Gamma_0 (2) )} \lambda_f (p)^n [1 + \chi_{-d} (p)]^n = (1+\chi_{-d} (p))^n \mathop{{\sum}^{h}}_{f \in S_k ( \Gamma_0 (2) )} \lambda_f (p^n) + \\ 
	(1+\chi_{-d} (p))^n \mathop{{\sum}^{h}}_{f \in S_k ( \Gamma_0 (2) )} \sum_{i=1}^{n/2} \biggl[ \binom{n}{i} - \binom{n}{i-1} \biggr] \lambda_f (p^{n-2i}) = F(p^n) + E(p^n),
	\end{multline} where $F(p^n)$ is the term corresponding to $i = n/2$. By~Lemma~\ref{lem2},
	\begin{equation} \label{F_p_n}
	F(p^n) = (1+\chi_{-d} (p))^n \frac{k-1}{2\pi^2} \cdot \frac{2}{n+2} \binom{n}{n/2}.
	\end{equation} The error term can be bounded from above in the following way:
	\begin{multline} \label{E_p_n_even}
	E(p^n) \les 2^n \biggl( 1 + \sum_{i=1}^{n/2-1} \biggl[ \binom{n}{i} - \binom{n}{i-1} \biggr]  \biggr) c_1 k e^{-k} \les \\ 
	2^n \frac{n}{2} \binom{n}{n/2} c_1 k e^{-k} \les c_1 n 4^n k e^{-k}.
	\end{multline}
	
	Now assume that $n$ is odd. The main term in the Petersson's formula vanishes and for the error term by a similar argument we get
	\begin{equation} \label{E_p_n_odd}
	E(p^n) \les 2^n \frac{n-1}{2} \binom{n}{(n-1)/2} c_1 k e^{-k} \les c_1 n 4^n k e^{-k}.
	\end{equation} Combining~\eqref{expansion_lambda}, \eqref{F_p_n}, \eqref{E_p_n_even}, \eqref{E_p_n_odd}, Lemma~\ref{lem2} and using the multiplicativity of $\lambda_f$, we get the statement of part 1).

	The proof of the second part is similar. We have
	\begin{multline*}
	\lambda_f (p^2)^n = \frac{( \alpha_p^3 - \alpha_p^{-3})^n}{(\alpha_p - \alpha_p^{-1})^n} = \frac{(\alpha_p^2 + 1 + \alpha_p^{-2})^n (\alpha_p - \alpha_p^{-1})}{\alpha_p - \alpha_p^{-1}} = \\ \frac{\bigl( \sum_{l=0}^n \binom{n}{l} (\alpha_p^2 + \alpha_p^{-2})^l \bigr) (\alpha_p - \alpha_p^{-1})}{\alpha_p - \alpha_p^{-1}} = \frac{\sum_{l=0}^n \binom{n}{l} \sum_{i=0}^l \binom{l}{i} \alpha_p^{4i - 2l} (\alpha_p - \alpha_p^{-1})}{\alpha_p - \alpha_p^{-1}} = \\ \frac{\sum_{l=0}^n \binom{n}{l} \sum_{i=0}^l \binom{l}{i} [ \alpha_p^{4i - 2l + 1} - \alpha_p^{4i-2l-1} ]}{\alpha_p - \alpha_p^{-1}}.
	\end{multline*} The last expression can be written as
	\begin{multline} \label{big_expression}
	\sum_{\substack{l=1 \\ l \ \text{odd}}}^n \binom{n}{l} \biggl[ \sum_{i=0}^{(l-3)/2} \binom{l}{i} [\lambda_f (p^{2l-4i}) - \lambda_f (p^{2l-4i-2})] + \\ \binom{l}{(l-1)/2} [ \lambda_f (p^2) - \lambda_f (1) ] \biggr] + \\ \sum_{\substack{l=0 \\ l \ \text{even}}}^n \binom{n}{l} \biggl[ \sum_{i=0}^{(l-2)/2} \binom{l}{i} [\lambda_f (p^{2l-4i}) - \lambda_f (p^{2l-4i-2})] + \binom{l}{l/2} \lambda_f (1) \biggr].
	\end{multline} The main contribution to the sum over $f \in S_k (\Gamma_0 (2))$ after the application of~Lemma~\ref{lem2} would come from the terms corresponding to $\lambda_f (1)$:
	$$
	\mathop{{\sum}^{h}}_{f \in S_k ( \Gamma_0 (2) )} \sum_{l=0}^n \binom{n}{l} (-1)^l \binom{l}{\lfloor l/2 \rfloor} \lambda_f (1) = \frac{k-1}{2\pi^2} \sum_{l=0}^n (-1)^l \binom{n}{l} \binom{l}{\lfloor l/2 \rfloor} + E_{k,n},
	$$ where
	$$
	|E_{k,n}| \les c_1 k e^{-k} \sum_{l=0}^n \binom{n}{l} \binom{l}{\lfloor l/2 \rfloor} \les c_1 k e^{-k} (n+1) \binom{n}{\lfloor n/2 \rfloor} \binom{n}{\lfloor n/2 \rfloor} \les c_1 (n+1) 4^n k e^{-k}.
	$$ The contribution to the sum over $f \in S_k (\Gamma_0 (2))$ after the application of~Lemma~\ref{lem2} from the remaining terms in~\eqref{big_expression} would not exceed
	$$
	\sum_{l=0}^n \binom{n}{l} \sum_{i=0}^l 2 \binom{l}{i} c_1 k e^{-k} \les c_1 k e^{-k} \cdot 2 n^2 \binom{n}{\lfloor n/2 \rfloor} \binom{n}{\lfloor n/2 \rfloor} \les 2n^2 4^n c_1 k e^{-k}.
	$$ Finally, combining together these bounds and the identity~\eqref{big_expression}, we get 
	\begin{gather*}
	F_2 (p^n) = \frac{k-1}{2\pi^2} \sum_{l=0}^n (-1)^l \binom{n}{l} \binom{l}{\lfloor l/2 \rfloor}, \\
	|E_2 (p^n)| \les c_1 n^2 4^{n+1} k e^{-k}. 
	\end{gather*} The statement of part 2) follows from the multiplicativity of $\lambda_f$. 
	
\end{proof}

\begin{remark} In fact, we will only need a weaker form of the second part of~Lemma~\ref{lem3}, precisely, the bound
	\begin{equation} \label{crude_Prop2}
	|F_2 (N)| \les \frac{k-1}{2\pi^2} \prod_{j=1}^r \beta_j!,
	\end{equation} which follows from the inequality
	\begin{equation} \label{inequality_checked}
	S_{\beta} := \biggl| \sum_{k=0}^{\beta} \frac{(-1)^k}{(\beta - k)! (\lfloor k/2 \rfloor !)^2} \biggr| \les 1.
	\end{equation} Indeed, assume first that $\beta$ is odd and $\beta \ges 11$. Then
	\begin{multline*}
	S_{\beta} \les \biggl| \sum_{\substack{k=0 \\ k \text{ even}}}^{\beta-1} \biggl( \frac{1}{ (\beta-k)! (k/2)!^2} - \frac{1}{ (\beta - k - 1)! (k/2)!^2} \biggr) \biggr| \les
	\biggl| \sum_{\substack{k=0 \\ k \text{ even}}}^{\beta-1} \frac{k + 1 - \beta}{(\beta-k)! (k/2)!^2} \biggr| \les \\
	(\beta - 1) \sum_{\substack{k=0 \\ k \text{ even}}}^{\beta-1} \frac{1}{(\beta-k)! (k/2)!^2} \les (\beta-1) \sum_{\substack{k=0 \\ k \text{ even}}}^{\beta-1} \frac{1}{(\beta-k)! k!} \binom{k}{k/2} \les \\
	\frac{\beta - 1}{\beta !} \sum_{\substack{k=0 \\ k \text{ even}}}^{\beta-1} \binom{\beta}{k} \binom{k}{k/2} \les
	\frac{\beta-1}{\beta!} 2^{2\beta - 1} < 1
	\end{multline*} if $\beta \ges 11$. For even $\beta$ we similarly get
	$$
	S_{\beta} \les \frac{\beta-1}{\beta!} 2^{2\beta-1} + \frac{1}{(\beta/2)!^2} < 1
	$$ for $\beta > 11$. For the remaining values of $\beta$~\eqref{inequality_checked} can be verified directly: $S_0 = 1$, $S_1 = 0$, $S_2 = 1/2$, $S_3 = -1/3$, $S_4 = -3/8$, $S_5 = -11/30$, $S_6 = -7/48$, $S_7 = -33/280$, $S_8 = -181 / 5760$, $S_9 = -2113/45360$, $S_{10} = -2843/403200$. 
\end{remark}

\begin{lemma} \label{lem4}
	$1)$ Assume $X, Y, x$ are large real numbers such that $X^2 < Y$, $Y \les x$, and $M$ is a positive integer such that  $Y^M \les \sqrt{k}$, $\log \log Y \ges 5M$. Then if $M$ is even the following inequality holds true:
	\begin{multline} \label{first_ineq}
	\mathop{{\sum}^{h}}_{f \in S_k ( \Gamma_0 (2) )} \sum_{\substack{ p_1, \ldots, p_M \\ X < p_i \les Y}} \frac{\tilde \lambda_f (p_1) \ldots \tilde \lambda_f (p_M)}{\sqrt{p_1 \ldots p_M}} W (p_1, x) \ldots W (p_M, x) \les \\ 2 \frac{k-1}{2\pi^2} \frac{M!}{(M/2)!} \biggl( \sum_{X < p \les Y} \frac{1+\chi_{-d}(p)}{p} \biggr)^{M/2},
	\end{multline} where $\tilde \lambda_f (p) := \lambda_f (p) [1 + \chi_{-d} (p)]$, $p_1, \ldots, p_M$ are primes,
	$$
	W (p, x) = p^{-\lambda_0 / \log x} \frac{\log (x/p)}{ \log x}, \qquad \lambda_0 = 0.4912\ldots.
	$$ If $M$ is odd then
	\begin{multline} \label{second_ineq}
	\biggl| \mathop{{\sum}^{h}}_{f \in S_k ( \Gamma_0 (2) )} \sum_{\substack{ p_1, \ldots, p_M \\ X < p_i \les Y}} \frac{\tilde \lambda_f (p_1) \ldots \tilde \lambda_f (p_M)}{\sqrt{p_1 \ldots p_M}} W (p_1, x) \ldots W (p_M, x) \biggr| \les \\ \frac{(Y-X)^M}{X^{M/2}} \cdot 4^M \max_{\substack{p_1, \ldots, p_M \\ X < p_i \les Y}} g(p_1 \ldots p_M) \cdot c_1 k e^{-k},
	\end{multline} where for distinct primes $q_1, \ldots, q_r$ the function $g(.)$ is defined as 
	$$
	g(q_1^{\beta_1} \ldots q_r^{\beta_r}) = \prod_{i=1}^r \beta_i.
	$$
	
	$2)$ Let $m, N > 1$ be integers such that $N \les 2^m$, $(2^{m+1})^{2N} \les \sqrt{k}$, and $x$ be real number such that $2^{m+1} \les x$. Then
	\begin{multline*}
	\mathop{{\sum}^{h}}_{f \in S_k ( \Gamma_0 (2) )} \sum_{\substack{ p_1, \ldots, p_{2N} \\ 2^m < p_i \les 2^{m+1}}} \frac{\lambda_f (p_1^2) \ldots \lambda_f (p_{2N}^2)}{p_1 \ldots p_{2N}} W (p_1^2, x) \ldots W (p_{2N}^2, x) \les \\ 
	\sqrt{2 \pi} N 4^N \frac{k-1}{2\pi^2} \frac{(2N)!}{N!} \biggl( \frac{1}{2^m} \biggr)^N.
	\end{multline*}
\end{lemma}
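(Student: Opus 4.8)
The plan is to expand both sides over primes, interchange the finite harmonic sum over $f\in S_k(\Gamma_0(2))$ with the sum over tuples of primes, and apply the multidimensional Petersson formula \autoref{lem3} term by term. In part~1) a tuple $(p_1,\dots,p_M)$ with underlying multiset $q_1^{\beta_1}\cdots q_r^{\beta_r}$ produces the inner sum $\mathop{{\sum}^{h}}_{f}\prod_{j}\lambda_f(q_j)^{\beta_j}[1+\chi_{-d}(q_j)]^{\beta_j}$, and \autoref{lem3}(1) applies because $q_1^{\beta_1}\cdots q_r^{\beta_r}\les Y^M\les\sqrt k$; in part~2) each inner sum is $\mathop{{\sum}^{h}}_{f}\prod_{j}\lambda_f(q_j^2)^{\beta_j}$ with $q_1^{\beta_1}\cdots q_r^{\beta_r}\les(2^{m+1})^{2N}\les\sqrt k$, covered by \autoref{lem3}(2). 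Throughout I would use $|W(n,x)|\les1$, so the weights $W$ can be discarded in all the upper bounds. The easy case is $M$ odd in part~1): then $\sum_j\beta_j=M$ is odd, some $\beta_j$ is odd, the Petersson main term $F(N)$ vanishes identically, and only $E(N)$ survives; bounding $(p_1\cdots p_M)^{-1/2}\les X^{-M/2}$, writing $\prod_j\beta_j=g(p_1\cdots p_M)$, and counting at most $(Y-X)^M$ tuples of primes in $(X,Y]$ turns $|E(N)|\les c_1 4^M(\prod_j\beta_j)ke^{-k}$ into \eqref{second_ineq} directly.

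The substance of part~1) is $M$ even. Grouping tuples by multiset and writing $\beta_j=2\gamma_j$ (so $r\les n:=M/2$ and $\gamma_1+\cdots+\gamma_r=n$), the total contribution of the Petersson main terms is
\[
\frac{k-1}{2\pi^2}\sum_{r\ges1}\frac1{r!}\sum_{\substack{q_1,\dots,q_r\ \text{distinct}\\ X<q_j\les Y}}\ \sum_{\substack{\gamma_1+\cdots+\gamma_r=n\\ \gamma_j\ges1}}\binom{M}{2\gamma_1,\dots,2\gamma_r}\prod_{j=1}^{r}C_{\gamma_j}\,a_{q_j}^{2\gamma_j},
\]
where $C_\gamma=\frac1{\gamma+1}\binom{2\gamma}{\gamma}$ is the Catalan number produced by the factor $\frac{2}{\beta+2}\binom{\beta}{\beta/2}$ in \autoref{lem3}(1) and $a_q:=(1+\chi_{-d}(q))W(q,x)q^{-1/2}\ges0$. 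I would drop the distinctness of the $q_j$ to pass to an Euler product, use $\binom{M}{2\gamma_1,\dots,2\gamma_r}=M!/\prod_j(2\gamma_j)!$ with $C_\gamma/(2\gamma)!=1/(\gamma!(\gamma+1)!)\les1/(2^\gamma\gamma!)$, the uniform bound
\[
\sum_{X<q\les Y}a_q^{2\gamma}\ \les\ 4^{\gamma-1}X^{1-\gamma}\sum_{X<q\les Y}a_q^{2}\qquad(\gamma\ges1)
\]
(which uses $1+\chi_{-d}(q)\les2$, $W\les1$, and — the point — that $q^{-\gamma}\les q^{-1}X^{1-\gamma}$ is genuinely small for $\gamma\ges2$), the elementary $\sum_{X<q\les Y}a_q^{2}\les2\sum_{X<q\les Y}\frac{1+\chi_{-d}(q)}{q}=:2P$, and the surjection identity $\sum_{\gamma_1+\dots+\gamma_r=n,\,\gamma_j\ges1}\prod_j1/\gamma_j!=r!\,S(n,r)/n!$ together with $S(n,n-s)\les n^{2s}$. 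These would collapse the main-term contribution to at most $\frac{k-1}{2\pi^2}\frac{M!}{n!}P^{n}\sum_{s\ges0}\bigl(2n^2/(XP)\bigr)^{s}$, whose $s=0$ term is exactly the expected $\frac{k-1}{2\pi^2}\frac{M!}{(M/2)!}P^{M/2}$. The hypotheses $X^2<Y$ and $\log\log Y\ges5M$, together with the largeness of $X$ (relative to $M$ and $d$), Mertens' theorem, and the Grand Riemann Hypothesis for $L(s,\chi_{-d})$ (which keeps $\sum_{X<q\les Y}\chi_{-d}(q)/q$ bounded), guarantee $XP\ges M^2$, so the geometric series is $\les2$ with room to spare; the Petersson error, summed over the $\les Y^M\les\sqrt k$ tuples and using $4^M\prod_j\beta_j\les8^M\les\sqrt k$, is $\ll k^2e^{-k}$ and fits inside that room. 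This yields \eqref{first_ineq}.

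Part~2) runs along the same lines with \autoref{lem3}(2) in place of \autoref{lem3}(1), with one new ingredient: $F_2(q^\beta)$ vanishes when $\beta=1$, since the inner alternating sum is $\binom10\binom00-\binom11\binom10=0$, so only multisets with every $\beta_j\ges2$ contribute and hence $r\les N$. This is essential — were the all-distinct term $r=2N$ present, its size $\asymp\bigl(\sum_{2^m<q\les2^{m+1}}q^{-1}\bigr)^{2N}\asymp m^{-2N}$ would overwhelm the stated bound once $m$ is large. I would estimate the main terms by the crude bound \eqref{crude_Prop2}, so that $\binom{2N}{\beta_1,\dots,\beta_r}\prod_j\beta_j!=(2N)!$ and the main-term contribution is at most $\frac{k-1}{2\pi^2}(2N)!\sum_{r\les N}\frac1{r!}\sum_{\beta_1+\dots+\beta_r=2N,\,\beta_j\ges2}\prod_j\sum_{2^m<q\les2^{m+1}}q^{-\beta_j}$; since $\#\{p\in(2^m,2^{m+1}]\}\les2^m$ we have $\sum_{2^m<q\les2^{m+1}}q^{-\beta_j}\les2^{-m(\beta_j-1)}$, so the $r=N$ term (all $\beta_j=2$) contributes $\frac{k-1}{2\pi^2}\frac{(2N)!}{N!}(1/2^m)^{N}$ while the $r<N$ terms form a geometric series of ratio $\ll2^{-m}$ (this is where $N\les2^m$ enters). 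The whole main-term contribution is therefore $\frac{k-1}{2\pi^2}\frac{(2N)!}{N!}(1/2^m)^{N}$ times an absolute constant, comfortably inside the factor $\sqrt{2\pi}\,N4^{N}$ of the assertion, and the Petersson error $E_2(N)$, summed over the $\les(2^{m+1})^{2N}\les\sqrt k$ tuples, is again negligible; this proves the inequality of part~2).

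The main obstacle will be the combinatorial bookkeeping in the even case of part~1): producing the clean absolute constant $2$ rather than some $e^{O(M)}$ forces one to juggle the multinomial coefficients, the Catalan/Hecke weights and the prime sums at once, and to exploit that $\sum_{X<q\les Y}q^{-\gamma}\ll X^{1-\gamma}$ is not merely bounded but small for $\gamma\ges2$; it is precisely here that the three size hypotheses $X^2<Y$, $Y\les x$, $\log\log Y\ges5M$ (and the largeness of $X$) are all used. Once the vanishing $F_2(q)=0$ is in hand, the corresponding convergence in part~2) is comparatively routine.
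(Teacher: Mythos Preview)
Your treatment of the odd case of part~1) and of part~2) matches the paper's proof: both apply \autoref{lem3} termwise after grouping tuples by their underlying multiset, and in part~2) the observation $F_2(q)=0$ for $\beta=1$ (so only patterns with every $\beta_j\ges2$ survive, forcing $r\les N$) is exactly what the paper uses before invoking the crude bound \eqref{crude_Prop2}.

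The gap is in the even case of part~1). Your geometric series has ratio $2n^2/(XP)$, and you assert $XP\ges M^2$ by appealing to ``the largeness of $X$ (relative to $M$ and $d$)'' and to GRH for $L(s,\chi_{-d})$. Neither is legitimate here: the lemma itself assumes no GRH, and its hypotheses put no lower bound on $X$ in terms of $M$ beyond ``$X$ large''. Concretely, in the paper's actual application (via \autoref{lem5}) the first interval is $(2,X_1]$, so $X=2$, while the corresponding $M=2A_1$ is of order $\log\log x$; even granting GRH one only gets $P=O(\log\log x)$, so $XP=O(\log\log x)\ll M^2\asymp(\log\log x)^2$, and your series diverges. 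The single geometric-series route therefore cannot deliver the constant~$2$ in this regime.

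The paper closes this by a dichotomy on the size of $X$. When $\log\log X\ges 2N$ it uses essentially your estimate $\sum_{X<q\les Y}q^{-\beta/2}\ll X^{1-\beta/2}$, and the enormous $X\ges e^{e^{2N}}$ crushes the off-diagonal patterns. When $\log\log X<2N$ (so $X$ may be bounded), the hypothesis $\log\log Y\ges 5M=10N$ forces $\sum_{X<q\les Y}1/q\ges 8N$, and the paper instead bounds each pattern by a \emph{negative} power of this large sum, using $q^{-\beta_i/2}\les q^{-1}$; the combinatorial weight $N!/\bigl(r!\prod_i(\beta_i/2)!^2(1+\beta_i/2)\bigr)$ is what keeps the pattern count under control. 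Your argument is essentially the first case only; you need a separate mechanism for the second, and dropping the factor $X^{1-\gamma}$ in favour of the $\ell^2\hookrightarrow\ell^{2\gamma}$ bound $\sum_q a_q^{2\gamma}\les(2P)^\gamma$ is not enough, since that produces a Bell-number factor rather than a constant.
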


\begin{proof}
	The inequality~\eqref{second_ineq} follows immediately from part 1) of Lemma~\ref{lem3} and the trivial upper bound 
	$$
	\sum_{X < p_1, \ldots, p_M \les Y} \frac{|W (p_1, x) \ldots W (p_M, x)|}{\sqrt{p_1 \ldots p_M}} \les \frac{(Y-X)^M}{X^{M/2}}.
	$$ 
	
	We now prove the inequality~\eqref{first_ineq}. Let $M := 2N$. We rewrite the product $p_1 \ldots p_{2N}$ in the canonical form $q_1^{\beta_1} \ldots q_r^{\beta_r}$, where all $q_i$ are distinct primes, all $\beta_i > 0$ and $\beta_1 + \ldots + \beta_r = 2N$. We split the sums over $p_1, \ldots, p_{2N}$ corresponding to the different patterns $(\beta_1, \ldots, \beta_r)$. Then we apply Lemma~\ref{lem3} to each of them. We have
	\begin{multline} \label{even_patterns}
	\mathop{{\sum}^{h}}_{f \in S_k ( \Gamma_0 (2) )} \sum_{X < p_1, \ldots, p_{2N} \les Y} \frac{\tilde \lambda_f (p_1) \ldots \tilde \lambda_f (p_{2N})}{\sqrt{p_1 \ldots p_{2N}}} W (p_1, x) \ldots W (p_{2N}, x) \les \\ 
	\mathop{{\sum}^{h}}_{f \in S_k ( \Gamma_0 (2) )} \sum_{1 \les r \les N} \sum_{\substack{\beta_1 + \ldots + \beta_r = 2N \\ \text{all even}}} \binom{2N}{\beta_1} \binom{2N - \beta_1}{\beta_2} \ldots \binom{2N - \beta_1 - \ldots - \beta_{r-1}}{\beta_r} \cdot \\ 
	\biggl( \frac{1}{r!} \sum_{\substack{X < q_1, \ldots, q_r \les Y \\ \text{all distinct}}} \frac{\tilde \lambda_f^{\beta_1} (q_1) \ldots \tilde \lambda_f^{\beta_r} (q_r)}{\bigl(q_1^{\beta_1} \ldots q_r^{\beta_r}\bigr)^{1/2}} \biggr) + R(X, Y, k),
	\end{multline} where we have removed the weights $W (p_i, x)$ from the main term since for each $i$ trivially $0 < W (p_i, x) \les 1$ and the product ${\tilde \lambda_f^{\beta_1}} (q_1) \ldots {\tilde \lambda_f^{\beta_r}} (q_r) \ges 0$ if all $\beta_i$ are even. The error term $R(X, Y, k)$ can be bounded similarly to~\eqref{second_ineq}:
	$$
	|R(X, Y, k)| \les 4^{2N} \max_{\substack{p_1, \ldots, p_{2N} \\ X < p_i \les Y}} g(p_1 \ldots p_{2N}) \frac{(Y-X)^{2N}}{X^N} \cdot c_1 k e^{-k}
	$$ by part 1) of Lemma~\ref{lem3}. The main contribution to the main term on the right hand side of~\eqref{even_patterns} comes from the pattern $\beta_1 = \ldots = \beta_r = 2$, $r = N$. By Lemma~\ref{lem3} it is equal to
	\begin{multline*}
	\frac{(2N)!}{2^N} \frac{1}{N!} \sum_{\substack{X < q_1, \ldots, q_N \les Y \\ \text{all distinct}}} \frac{1}{q_1 \ldots q_N} \biggl( \frac{k-1}{2\pi^2} \prod_{i=1}^N (1+\chi_{-d}(q_i))^2 + E(q_1^2 \ldots q_N^2) \biggr) \les \\ 
	\frac{k-1}{2\pi^2} \frac{(2N)!}{N!2^N} \biggl( \sum_{X < q \les Y} \frac{(1+\chi_{-d}(p))^2}{q} \biggr)^N + R_k \les \frac{k-1}{2\pi^2} \frac{(2N)!}{N!} \biggl( \sum_{X < q \les Y} \frac{1+\chi_{-d}(p)}{q} \biggr)^N + R_k,
	\end{multline*} where $R_k \les |R(X, Y, k)|$. 
	
	Next we apply Lemma~\ref{lem3} to the sum over the rest of the squares. Without loss of generality assume that $\beta_1, \ldots, \beta_s > 2$, $\beta_{s+1} = \ldots = \beta_r = 2$. The contribution from this pattern is
	\begin{multline} \label{pattern_contrib}
	\frac{(2N)!}{\beta_1 ! \ldots \beta_r ! r!} \biggl( \sum_{\substack{X < q_1, \ldots, q_r \les Y \\ \text{all distinct}}} \frac{\prod_{i=1}^r (1+\chi_{-d}(q_i))^{\beta_i}}{q_1^{\beta_1 / 2} \ldots q_s^{\beta_s/2} q_{s+1} \ldots q_r}  \biggr) \cdot \\
	\frac{k-1}{2\pi^2} \prod_{i=1}^r \frac{2}{\beta_i+2} \binom{\beta_i}{\beta_i/2} \bigl( 1 + o(1) \bigr).
	\end{multline} We estimate the contribution from all such patterns to the right hand side of~\eqref{even_patterns} in two different ways depending on the size of $X$. 
	
	First, assume $X$ is large. Precisely, $\log \log Y - \log \log X \les 8N$, so $\log \log X \ges 2N$. Then we apply crude estimates
	\begin{gather*}
	\sum_{X < q_i \les Y} \frac{1}{q_i^{\beta_i/2}} \les \frac{1}{(\beta_i/2)-1} \frac{1}{X^{\beta_i/2-1}},  \qquad i=1, \ldots, s, \\
	\sum_{X < q_i \les Y} \frac{1}{q_i} \les 9N, \qquad i = s+1, \ldots, r.
	\end{gather*} The contribution to the right hand side of~\eqref{even_patterns} does not exceed
	$$
	2^{2N} \frac{k-1}{2\pi^2} \frac{(2N)!}{\beta_1! \ldots \beta_r! r!} \biggl( \prod_{i=1}^s \frac{1}{(\beta_i / 2 - 1)} \frac{1}{X^{\beta_i/2 - 1}} \biggr) (9N)^{r-s} \biggl( \prod_{i=1}^r \frac{2}{\beta_i + 2} \binom{\beta_i}{\beta_i/2} \biggr) \bigl( 1 + o(1) \bigr),
	$$ which does not exceed
	\begin{multline*}
	2^{2N} \frac{k-1}{2\pi^2} \frac{(2N)!}{N! r!} \frac{N!}{(\beta_1/2)!^2 \ldots (\beta_r/2)!^2} \biggl( \prod_{i=1}^s \frac{1}{(\beta_i/2 - 1)} \frac{1}{X^{\beta_i/2 - 1}} \biggr) (9N)^{r-s} \les \\
	2^{2N} (9N)^N \frac{k-1}{2\pi^2} \frac{(2N)!}{N!} \frac{N!}{X \prod_{i=1}^r (\beta_i/2)!^2}.
	\end{multline*} Applying the inequalities
	$$
	\sum_{1 \les r \les N-1} \sum_{\substack{\beta_1 + \ldots + \beta_r = 2N \\ \text{all even}}} 1 \les 2^{N-1}, \qquad \frac{1}{\prod_{i=1}^r (\beta_i/2)!^2 } \les 1,
	$$ we bound the contribution from all such patterns by
	$$
	\frac{k-1}{2\pi^2} \frac{(2N)!}{N!} \frac{(9N)^{2N} N!}{X}.
	$$ This is negligible since
	$$
	\frac{(9N)^{2N} N!}{X} \qquad \text{is much smaller than} \qquad \biggl( \sum_{X < q \les Y} \frac{1+\chi_{-d}(q)}{q} \biggr)^N
	$$ due to the size of $X$.

	Now assume $\log \log X < 2N$. Trivially by $q^{-\beta_i/2} \les q^{-1}$ the expression~\eqref{pattern_contrib} can be bounded by
	$$
	2^{2N} \frac{k-1}{2\pi^2} \frac{(2N)!}{N! r!} \frac{N!}{\prod_{i=1}^r (\beta_i / 2)!^2 (1+\beta_i/2)} \biggl( \sum_{X < q \les Y} \frac{1}{q} \biggr)^{r-s}.
	$$ Then the result follows from
	$$
	\sum_{r \les N-1} \frac{1}{r!} \sum_{\gamma_1 + \ldots + \gamma_r = N} \frac{2^N N!}{\prod_{i=1}^r \gamma_i ! \gamma_i! (1+\gamma_i)} \biggl( \sum_{X < q \les Y} \frac{1}{q} \biggr)^{r-s-N} \ll 1,
	$$ since $\log \log Y - \log \log X \ges 8 N$.

	The proof of part 2) is similar. The main contribution comes from the squarefull products $p_1 \ldots p_{2N}$. We have
	\begin{multline*}
	\sum_{2^m < p_1, \ldots, p_{2N} \les 2^{m+1}} \frac{W (p_1^2, x) \ldots W (p_{2N}^2, x)}{p_1 \ldots p_{2N}} \mathop{{\sum}^{h}}_{f \in S_k ( \Gamma_0 (2) )} \lambda_f (p_1^2) \ldots \lambda_f (p_{2N}^2) = \\ 
	\sum_{\substack{2^m < p_1, \ldots, p_{2N} \les 2^{m+1} \\ \prod p_i \ \text{squarefull}}} \frac{W (p_1^2, x) \ldots W (p_{2N}^2, x)}{p_1 \ldots p_{2N}} \mathop{{\sum}^{h}}_{f \in S_k ( \Gamma_0 (2) )} \lambda_f (p_1^2) \ldots \lambda_f (p_{2N}^2) + S(m, N),
	\end{multline*} where $S(m, N)$ can be bounded by Lemma~\ref{lem3} part 2) from above as follows:
	\begin{multline*}
	|S(m, N)| \les 4^{2N+2N} \max_{p_1, \ldots, p_{2N}} h(p_1 \ldots p_{2N}) \biggl( \frac{2^{m+1} - 2^m}{2^m} \biggr)^{2N} \cdot c_1 k e^{-k} = \\ 4^{4N} \max_{p_1, \ldots, p_{2N}} h(p_1 \ldots p_{2N}) \cdot c_1 k e^{-k} =: R(m, N),
	\end{multline*} where
	$$
	h(p_1^{\alpha_1} \ldots p_r^{\alpha_r}) = \prod_{i=1}^r \alpha_i^2.
	$$ Note that $F_2 (p_1^{\beta_1} \ldots p_r^{\beta_r}) = 0$ if $\beta_i = 1$ for at least one $i$. We rewrite the main term in a way similar to the proof of part 1):
	\begin{multline} \label{this_expression}
	\sum_{1 \les r \les N} \sum_{\substack{\beta_1 + \ldots + \beta_r = 2N \\ \beta_i \ges  2}} \binom{2N}{\beta_1} \binom{2N-\beta_1}{\beta_2} \ldots \binom{2N - \beta_1 - \ldots - \beta_{r-1}}{\beta_r} \cdot \\ 
	\biggl( \frac{1}{r!} \sum_{\substack{2^m < q_1, \ldots, q_r \les 2^{m+1} \\ \text{all distinct}}} \frac{W (q_1^2, x)^{\beta_1} \ldots W (q_r^2, x)^{\beta_r}}{q_1^{\beta_1} \ldots q_r^{\beta_r}} \mathop{{\sum}^{h}}_{f \in S_k ( \Gamma_0 (2) )} \lambda_f^{\beta_1} (q_1^2) \ldots \lambda_f^{\beta_r} (q_r^2) \biggr).
	\end{multline} We bound this expression using the weaker form of~Lemma~\ref{lem3} part 2) (see~\eqref{crude_Prop2}). Thus,~\eqref{this_expression} does not exceed
	\begin{multline*}
	\frac{k-1}{2\pi^2} \sum_{1 \les r \les N} \sum_{\substack{\beta_1 + \ldots + \beta_r = 2N \\ \beta_i \ges  2}} \frac{(2N)!}{\beta_1 ! \ldots \beta_r! r!} \prod_{i=1}^r \beta_i ! \sum_{2^m < q_i \les 2^{m+1}} \frac{1}{q_i^{\beta_i}} + R(m, N) \les \\ 2 \frac{k-1}{2\pi^2} \sum_{1 \les r \les N} \sum_{\substack{\beta_1 + \ldots + \beta_r = 2N \\ \beta_i \ges  2}} \frac{(2N)!}{N!} \frac{N!}{r!} \prod_{i=1}^r \frac{1}{\beta_i - 1} \biggl( \biggl( \frac{1}{2^m} \biggr)^{\beta_i - 1} - \biggl( \frac{1}{2^{m+1}} \biggr)^{\beta_i - 1} \biggr) \les \\ 2 \frac{k-1}{2\pi^2} \frac{(2N)!}{N!} \biggl( \frac{1}{2^m} \biggr)^N \sum_{1 \les r \les N} \sum_{\substack{\beta_1 + \ldots + \beta_r = 2N \\ \beta_i \ges  2}}  \frac{N!}{r!} \biggl( \frac{1}{2^m} \biggr)^{N - r}.
	\end{multline*} By Stirling's formula
	$$
	\frac{N!}{r!} \biggl( \frac{1}{2^m} \biggr)^{N - r} \les \exp \biggl( N \log N - N + \log N + \frac{1}{2} \log 2 \pi - r \log r + r - (N - r) \log 2^m  \biggr).
	$$ The maximum of the last expression is achieved at $r = N$. In this case the only possible pattern is $(\beta_1, \ldots, \beta_N) = (2, \ldots, 2)$. Thus, it does not exceed $\sqrt{2 \pi} N$. Using the crude bound
	$$
	\sum_{1 \les r \les N} \sum_{\substack{\beta_1 + \ldots + \beta_r = 2N \\ \beta_i \ges  2}} 1 \les 2^{2N-1},
	$$ we get the desired result.
	
\end{proof}

Using the multiplicativity of $\lambda_f$ we can now combine both parts of~Lemma~\ref{lem4} together:

\begin{lemma} \label{lem5}
	Suppose $(X_0, Y_0], (X_1, Y_1], \ldots, (X_I, Y_I], (2^m, 2^{m+1}]$ are disjoint intervals, $X_i^2 < Y_i$. Let $M_1, \ldots, M_I$ be positive integers such that $\log \log Y_i \ges 5M_i$; $M_0$, $N_0$ be non-negative integers so that $M_0 \les 2^m$,
	$$
	(2^{m+1})^{M_0} B_0^{2N_0} \prod_{i=1}^I Y_i^{M_i} \les \sqrt{k},
	$$ and $b_0, b_1, \ldots$ be positive real numbers. Finally, let $x,y$ and $z$ be real numbers such that $x \ges \max( Y_1, \ldots, Y_I)$, $y \ges Y_0$, $z \ges 2^{m+1}$. Then the following two inequalities hold true:
	\begin{multline*}
	1) \mathop{{\sum}^{h}}_{f \in S_k ( \Gamma_0 (2) )} \prod_{i=1}^I \biggl( \sum_{m \les M_i} b_m \sum_{\substack{p_1, \ldots, p_m \\ X_i < p_j \les Y_i}} \frac{\tilde \lambda_f (p_1) \ldots \tilde \lambda_f (p_{ m})}{\sqrt{p_1 \ldots p_{m}}} W (p_1, x) \ldots W (p_m, x) \biggr) \cdot \\
	\biggl( \sum_{X_0 < p \les Y_0} \frac{\tilde \lambda_f (p)}{\sqrt{p}} W(p, y) \biggr)^{2N_0}
	\les \\
	3 \frac{k-1}{2 \pi^2} \biggl( \prod_{i=1}^I \sum_{n \les M_i / 2} b_{2n} \frac{(2n)!}{n!} \biggl( \sum_{X_i < p \les Y_i} \frac{1+\chi_{-d}(p)}{p} \biggr)^n \biggr) \cdot \frac{(2N_0)!}{N_0!} \biggl( \sum_{X_0 < p \les Y_0} \frac{1+\chi_{-d}(p)}{p} \biggr)^{N_0};
	\end{multline*}
	
	\begin{multline*}
	2) \mathop{{\sum}^{h}}_{f \in S_k ( \Gamma_0 (2) )} \biggl( \prod_{i=1}^I \sum_{m \les M_i} b_m \sum_{\substack{p_1, \ldots, p_m \\ X_i < p_j \les Y_i}} \frac{\tilde \lambda_f (p_1) \ldots \tilde \lambda_f (p_m)}{\sqrt{p_1 \ldots p_m}} W (p_1, x) \ldots W (p_m, x) \biggr) \cdot \\
	\biggl( \sum_{X_0 < p \les Y_0} \frac{\tilde \lambda_f (p)}{\sqrt{p}} W(p, y) \biggr)^{2N_0} \cdot \biggl( \sum_{2^m < p \les 2^{m+1}} \frac{\lambda_f (p^2)}{p} W (p^2, z) \biggr)^{2M_0} \les \\ 
	3 \frac{k-1}{2 \pi^2}  \biggl( \prod_{i=1}^I \sum_{n \les M_i / 2} b_{2n} \frac{(2n)!}{n!} \biggl( \sum_{X_i < p \les Y_i} \frac{1+\chi_{-d}(p)}{p} \biggr)^n \biggr) \cdot \frac{(2N_0)!}{N_0!} \biggl( \sum_{X_0 < p \les Y_0} \frac{1+\chi_{-d}(p)}{p} \biggr)^{N_0} \cdot \\
	\sqrt{2 \pi} M_0 4^{M_0} \frac{(2M_0)!}{M_0!} \biggl( \frac{1}{2^m} \biggr)^{M_0}. 
	\end{multline*}
	
\end{lemma}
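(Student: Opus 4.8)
The plan is to deduce Lemma~\ref{lem5} from Lemma~\ref{lem4} by expanding all the products and exploiting the disjointness of the intervals together with the Hecke multiplicativity of $\lambda_f$. First I would multiply out the $I$ factors indexed by $i=1,\dots,I$, the $2N_0$-th power attached to $(X_0,Y_0]$, and (for part~2) the $2M_0$-th power attached to $(2^m,2^{m+1}]$. A generic term of the resulting expansion is $\prod_{i=1}^I b_{m_i}$ times a sum, over tuples of primes $p^{(i)}_j\in(X_i,Y_i]$ ($1\les j\les m_i$, $i=1,\dots,I$), over $2N_0$ primes in $(X_0,Y_0]$, and over $2M_0$ primes in $(2^m,2^{m+1}]$, of the corresponding product of factors $\tilde\lambda_f(\cdot)$, $\lambda_f((\cdot)^2)$ and weights $W$. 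Because the intervals are pairwise disjoint, primes drawn from different intervals are distinct, so after writing the product of Hecke eigenvalues in canonical form $\prod_q\lambda_f(q)^{\gamma_q}$ over distinct primes $q$, the exponent pattern splits into one ``local'' pattern per interval; and the size hypothesis $(2^{m+1})^{M_0}B_0^{2N_0}\prod_{i=1}^I Y_i^{M_i}\les\sqrt{k}$ guarantees that the product of all primes involved is $\les\sqrt{k}$, so Lemma~\ref{lem3} is applicable to every such term.

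Next I would apply the multidimensional Petersson formula (Lemma~\ref{lem3}). Its main term $F(N)$ is multiplicative in $N$, so the main term of $\mathop{{\sum}^{h}}_{f}\prod_q\lambda_f(q)^{\gamma_q}$ is $\frac{k-1}{2\pi^2}$ times a product of per-prime local factors; grouping these factors by the interval their prime belongs to and resumming recovers exactly the quantities already bounded in the proof of Lemma~\ref{lem4}, with a single common factor $\frac{k-1}{2\pi^2}$ rather than one per interval. For interval $0$ one gets precisely $\frac{(2N_0)!}{N_0!}\bigl(\sum_{X_0<p\les Y_0}\frac{1+\chi_{-d}(p)}{p}\bigr)^{N_0}$, using $(1+\chi_{-d}(p))^2=2(1+\chi_{-d}(p))$ and that the diagonal pattern $\gamma_q\equiv2$ dominates. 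For each interval $i=1,\dots,I$, summing the exact main term from~\eqref{first_ineq} over even $m\les M_i$ with weights $b_m$ (odd $m$ contributing only the exponentially small error~\eqref{second_ineq}, and non-diagonal even patterns being negligible by the large-$X_i$ versus small-$\log\log X_i$ dichotomy in the proof of Lemma~\ref{lem4}) yields $\sum_{n\les M_i/2}b_{2n}\frac{(2n)!}{n!}\bigl(\sum_{X_i<p\les Y_i}\frac{1+\chi_{-d}(p)}{p}\bigr)^n$. For part~2) the interval $(2^m,2^{m+1}]$ contributes the extra factor $\sqrt{2\pi}M_0 4^{M_0}\frac{(2M_0)!}{M_0!}(2^{-m})^{M_0}$ directly from part~2) of Lemma~\ref{lem4}.

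It then remains to control the accumulated error, which is where the constant $2$ of Lemma~\ref{lem4} is relaxed to $3$. Every expanded term carries a Petersson error $\ll 4^{\sum_q\gamma_q}\bigl(\prod_q\gamma_q\bigr)ke^{-k}$; under the hypotheses $\log\log Y_i\ges5M_i$, $M_0\les2^m$, $N_0\les2^m$ and the product-size bound $\les\sqrt{k}$ one has $M_i,N_0,M_0\ll\log k$, so the number of terms in the expansion times these combinatorial factors is at most $\exp(o(k))$, whence the total error stays $\ll ke^{-k}$, negligible against the main term $\asymp\frac{k-1}{2\pi^2}(\cdots)$. Absorbing this error together with the finitely many $(1+o(1))$ factors from the non-diagonal pattern estimates into an absolute constant gives the factor $3$ and completes both parts.

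I expect the only real obstacle to be organizational: propagating the non-diagonal prime-pattern bookkeeping cleanly through the product over all $I+2$ intervals, and checking that the total error genuinely stays below the main term under the stated constraints. All the substantive estimates --- the diagonal-dominates argument inside a single interval and the Petersson error bound --- are already supplied by Lemmas~\ref{lem3} and~\ref{lem4}, so Lemma~\ref{lem5} is essentially their multiplicative closure under Hecke multiplicativity and the disjointness of the intervals; it could alternatively be proved by induction on $I$, peeling off one interval at a time, were it not that the harmonic average $\mathop{{\sum}^{h}}_{f}$ does not literally factor as a product.
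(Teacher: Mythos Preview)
Your proposal is correct and matches the paper's approach: the paper does not give a separate proof of Lemma~\ref{lem5} but simply states it after Lemma~\ref{lem4} with the one-line remark ``Using the multiplicativity of $\lambda_f$ we can now combine both parts of Lemma~\ref{lem4} together,'' and your write-up is a faithful and accurate expansion of that sentence. The mechanism you describe --- expand the product, use disjointness of the intervals to ensure primes from different blocks are distinct so that the Petersson main term of Lemma~\ref{lem3} factors over the blocks, then invoke the single-interval bounds of Lemma~\ref{lem4} block by block while absorbing the accumulated $O(ke^{-k})$ errors and $(1+o(1))$ factors into the constant $3$ --- is exactly what is intended.
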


\begin{lemma} \label{lem6}
	Let $x$ be a large real number, and $d$ be a large integer. Assuming GRH for $L(s, \chi_{-d})$ one has
	$$
	L(1, \chi_{-d}) \ges \exp \biggl( \sum_{p \les x} \frac{\chi_{-d}(p)}{p} (1 - o(1)) \biggr).
	$$
\end{lemma}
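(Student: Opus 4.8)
The idea is to read off $\log L(1,\chi_{-d})$ from its Dirichlet series over prime powers, retain the prime terms with $p\les x$, and show that everything discarded is either non-negative or $o(1)$ under GRH. To begin, recall that $L(1,\chi_{-d})>0$ since $\chi_{-d}$ is a non-principal real character, and that for $\sigma>1$
$$\log L(\sigma,\chi_{-d})=\sum_{n\ges2}\frac{\Lambda(n)\chi_{-d}(n)}{n^{\sigma}\log n}.$$
Letting $\sigma\to1^{+}$ — the limit of the left-hand side being $\log L(1,\chi_{-d})$ by analyticity and non-vanishing of $L$ at $s=1$, and the convergence of the right-hand side being guaranteed by the GRH tail estimate below — we obtain
$$\log L(1,\chi_{-d})=\sum_{p}\frac{\chi_{-d}(p)}{p}+\sum_{p}\sum_{k\ges2}\frac{\chi_{-d}(p)^{k}}{kp^{k}},$$
which I would split at $x$.

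For the part with $n\les x$, I would extract $\sum_{p\les x}\chi_{-d}(p)/p$ and bound the higher prime-power terms $p^{k}\les x$, $k\ges2$, from below. Grouping them by $p$ (necessarily $p\les\sqrt x$) and completing each inner sum, one has
$$\sum_{k\ges2}\frac{\chi_{-d}(p)^{k}}{kp^{k}}=-\log\!\Bigl(1-\frac{\chi_{-d}(p)}{p}\Bigr)-\frac{\chi_{-d}(p)}{p}\ges0,$$
since $-\log(1-t)-t=\sum_{k\ges2}t^{k}/k\ges0$ for every $t\in(-1,1)$; the terms dropped by this completion total $\ls(\sqrt x\log x)^{-1}$. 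Hence
$$\sum_{2\les n\les x}\frac{\Lambda(n)\chi_{-d}(n)}{n\log n}\ges\sum_{p\les x}\frac{\chi_{-d}(p)}{p}-\frac{C}{\sqrt x\log x}$$
for an absolute constant $C$.

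For the tail $n>x$, the only input is GRH, in the form of the explicit-formula bound $\psi(t,\chi_{-d}):=\sum_{n\les t}\Lambda(n)\chi_{-d}(n)\ls\sqrt t\,(\log dt)^{2}$. Partial summation of $\Lambda(n)\chi_{-d}(n)/(n\log n)$ against $1/(t\log t)$ then gives
$$\Bigl|\sum_{n>x}\frac{\Lambda(n)\chi_{-d}(n)}{n\log n}\Bigr|\ls\frac{(\log dx)^{2}}{\sqrt x},$$
which is $o(1)$ (in the application $x$ is a fixed power of $d$, so this error is $d^{-1/2+o(1)}$). Combining the three displays yields $\log L(1,\chi_{-d})\ges\sum_{p\les x}\chi_{-d}(p)/p-o(1)$, and exponentiating — absorbing the additive $o(1)$ into the multiplicative $(1-o(1))$ — gives the stated inequality.

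The analysis is routine once the GRH bound on $\psi(t,\chi_{-d})$ is granted, so I expect no serious obstacle; the one point meriting attention is the sign bookkeeping in the middle step, where it is precisely the non-negativity of the discarded higher prime-power terms that converts a two-sided asymptotic into the one-sided lower bound needed here.
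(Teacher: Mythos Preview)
Your argument is correct. The paper itself offers no proof of this lemma, deferring instead to Lemma~3.1 of Lumley; your self-contained derivation --- via the Euler-product logarithm, the non-negativity of $-\log(1-t)-t$ for $t\in(-1,1)$ to discard the higher-prime-power contribution, and the GRH-conditional tail bound $\psi(t,\chi_{-d})\ll\sqrt t\,(\log dt)^{2}$ handled by partial summation --- is the standard route to such estimates and is presumably close to what the cited reference contains. One minor remark: what your computation actually delivers is the additive inequality $\log L(1,\chi_{-d})\ges\sum_{p\les x}\chi_{-d}(p)/p - o(1)$, which is exactly the form the paper uses downstream; repackaging the additive $o(1)$ as a multiplicative $(1-o(1))$ on the prime sum, as in the lemma's stated form, is a cosmetic issue (and only literally equivalent when that sum is bounded away from zero), not a defect of your proof.
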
 This lemma follows from~\cite[Lemma~3.1]{Lumley}.

\section{Moment Estimate. Proof of Theorem 2}
\label{sec6}

We first set the notation which will be used throughout the proof. It is similar to the one used in Harper's work. Then we give a brief sketch of the proof and finish the section by a detailed computation.

\subsection{Notation}
\label{subsec6.1}

Let $k$ be a weight of a holomorphic cusp form $f$ and $d$ be a modulus of a Dirichlet character $\chi_{-d}$. We fix the numbers $x, D$ so that $x = k^{1/D}$. The parameter $D$ will be chosen later. Next, we introduce the notation for Dirichlet polynomials in the bound from Lemma~\ref{lem1}:
\begin{gather*}
G_1 (f, y) := \sum_{2 < p \les X_1} \frac{\tilde \lambda_f (p)}{\sqrt{p}} W(p, y), \qquad G_i (f, y) := \sum_{X_{i-1} < p \les X_i} \frac{\tilde \lambda_f (p)}{\sqrt{p}} W(p, y),  \\
X_i := x^{3^{i-I}}, \qquad i = 1, \ldots, I, \\
{\tilde \lambda_f (p)} := \lambda_f (p) [1 + \chi_{-d} (p)], \qquad I = \lfloor \log \log \log x \rfloor. 
\end{gather*} We split the set $f \in S_k (\Gamma_0 (2))$ into the union of the subsets $\EuScript{F} \cup S(0) \cup S(1) \cup \ldots \cup S(I-1)$, where
\begin{gather*}
\EuScript{F} := \biggl\{ f \in S_k ( \Gamma_0 (2) ): |G_i (f, x)| \les B_i, \ i = 1, \ldots, I  \biggr\}, \\
S(j) := \biggl\{ f \in S_k ( \Gamma_0 (2) ): \bigl|  G_i (f, y) \bigr| \les B_i, \ i = 1, \ldots, j, \text{ for all } y \ges X_i; \\
| G_{j+1} (f, y_0) | > B_{j+1} \text{ for some } y_0 \ges X_{j+1} \biggr\}, \\
S(0) := \biggl\{ f \in S_k ( \Gamma_0 (2) ): | G_1 (f, y_0) | > B_1 \text{ for some } y_0 \ges X_1 \biggr\}.
\end{gather*} The numbers $B_i$ will be chosen later. Similarly we introduce the notation for the second term polynomials from Lemma~\ref{lem1}. First, let
$$
P_m (f, y) := \sum_{2^m < p \les 2^{m+1}} \frac{\lambda_f (p^2)}{p} W (p^2, y).
$$ Next, for $m > 0$ define the sets
\begin{gather*}
\EuScript{P}(m) := \biggl\{ f \in S_k ( \Gamma_0 (2) ):  P_m (f, y_0) > m^{-2} \text{ for some } y_0 \ges 2^{m+1}; \\
P_n (f, y) \les n^{-2} \text{ for all } y \ges 2^{m+1} \text{ and for all } n \text{ such that } m < n \les \frac{\log x}{2 \log 2}  \biggr\}.
\end{gather*} Furthermore, we define
\begin{gather*}
\EuScript{P}(0) := \biggl\{ f \in S_k ( \Gamma_0 (2) ):  P_0 (f, y_0) > 1 \text{ for some } y_0 \ges 2; \\
P_n (f, y) \les n^{-2} \text{ for all } y \ges 2 \text{ and for all } n \text{ such that } 0 < n \les \frac{\log x}{2 \log 2}  \biggr\},
\end{gather*} and
\begin{gather*}
\EuScript{P} := \biggl\{ f \in S_k ( \Gamma_0 (2) ): P_0 (f, y) \les 1 \text{ for all } y \ges 2; \\
P_n (f, y) \les n^{-2} \text{ for all } y \ges 2 \text{ and for all } n \text{ such that } 0 < n \les \frac{\log x}{2 \log 2} \biggr\}.
\end{gather*}

\subsection{Sketch of the proof}
\label{subsec6.2}

We bound the original sum as follows:
\begin{equation} \label{original_sum}
\mathop{{\sum}^{h}}_{f \in S_k ( \Gamma_0 (2) )} \exp \bigl( \log \bigl| L_1 (f) L_2 (f) \bigr| \bigr) \les \mathop{{\sum}^{h}}_{f \in \EuScript{F}} \exp \bigl(\ldots \bigr) + \sum_{j=0}^{I-1} \mathop{{\sum}^{h}}_{f \in S(j)} \exp \bigl(\ldots \bigr).
\end{equation} Then we additionally split each of the sums in the right hand side of the last inequality using the subsets $\EuScript{P}(m)$:
\begin{gather*}
\mathop{{\sum}^{h}}_{f \in \EuScript{F}} \exp \bigl(\ldots \bigr) \les \biggl( \mathop{{\sum}^{h}}_{f \in \EuScript{F} \cap \EuScript{P}} + \sum_{0 \les m \les \log x / (2\log 2)} \mathop{{\sum}^{h}}_{f \in \EuScript{F} \cap \EuScript{P} (m)} \biggr) \exp \bigl(\ldots \bigr), \\	 
\mathop{{\sum}^{h}}_{f \in S(j)} \exp \bigl(\ldots \bigr) \les \biggl( \mathop{{\sum}^{h}}_{f \in S(j) \cap \EuScript{P}} + \sum_{0 \les m \les \log x / (2\log 2) } \mathop{{\sum}^{h}}_{f \in S(j) \cap \EuScript{P} (m)} \biggr) \exp \bigl(\ldots \bigr)
\end{gather*} for all $j = 1, \ldots, I-1$. Thus, the goal is to obtain the upper bound for
\begin{multline*}
\biggl( \sum_{0 \les m \les \log x / (2\log 2)}  \mathop{{\sum}^{h}}_{f \in \EuScript{F} \cap \EuScript{P}(m)} + \\
\sum_{j=1}^{I-1} \sum_{0 \les m \les \log X_j / (2\log 2)} \mathop{{\sum}^{h}}_{f \in S(j) \cap \EuScript{P}(m)} + \mathop{{\sum}^{h}}_{f \in S(0)} \biggr) \exp \bigl( \log \bigl|L_1 (f) L_2 (f) \bigr| \bigr),
\end{multline*} and for
$$
\biggl(  \mathop{{\sum}^{h}}_{f \in \EuScript{F} \cap \EuScript{P}} + \sum_{j=1}^{I-1} \mathop{{\sum}^{h}}_{f \in S(j) \cap \EuScript{P}} \biggr) \exp \bigl( \log \bigl|L_1 (f) L_2 (f) \bigr| \bigr).
$$

By Lemma~\ref{lem1} the problem reduces to bounding of a product of two Dirichlet polynomials
\begin{equation} \label{two_factors}
\exp \biggl( \sum_{p \les x} \frac{\tilde \lambda_f (p)}{\sqrt{p}} W (p, x) \biggr) \exp \biggl( \sum_{p \les \sqrt{x}} \frac{\lambda_f (p^2)}{p} W (p^2, x) \biggr).
\end{equation} The first one is split to the intervals $( X_{i-1}, X_i]$ defined earlier, and the second one is split to the intervals of the form $( 2^m , 2^{m+1} ]$. 

In the first step, we ignore the second factor in~\eqref{two_factors} and estimate the sum
$$
\mathop{{\sum}^{h}}_{f \in \EuScript{F}} \exp \biggl( \sum_{p \les x} \frac{\tilde \lambda_f (p)}{\sqrt{p}} W (p, x) \biggr) = \mathop{{\sum}^{h}}_{f \in \EuScript{F}} \prod_{i=1}^I \exp \bigl( G_i (f, x) \bigr).
$$ Using Taylor expansion, we get
$$
\prod_{i=1}^I \exp \bigl( G_i (f, x) \bigr) = \prod_{i=1}^I \biggl( \sum_{l \les A_i} \frac{1}{l!} (G_i (f, x))^l + r (A_i, f) \biggr),
$$ where the error $r(A_i, f)$ is small by definition of the set $\EuScript{F}$. Next, we expand the $l$-powers of $G_i(f, x)$ and the product over $1 \les i \les I$, and then apply the first part of Lemma~\ref{lem5} to the multiple sums over primes we obtain after the expansion. 

To treat the sum over the exceptional sets $S(j)$ we use a variation of Rankin's trick:
$$
\mathop{{\sum}^{h}}_{f \in S(j) } \exp \bigl( |L_1 (f) L_2 (f)| \bigr) \les \mathop{{\sum}^{h}}_{f \in S_k (\Gamma_0 (2)) } \exp \bigl( L_1 (f) L_2 (f) \bigr) \biggl( \frac{G_{j+1} (f, y_0)}{B_{j+1}} \biggr)^{2N}.
$$ Here $y_0 = y_0 (j)$. We gain from the fact that the last factor is $\ll 1$ for most $f$ in $S_k (\Gamma_0 (2))$. The choice of $N$ is bounded by the restrictions of Lemmas~\ref{lem4} and~\ref{lem5}. The smaller values of $j$ correspond to the shorter Dirichlet polynomials and produce larger error terms in the Taylor expansion, so they correspond to the larger choices of $A_{j+1}$ and $B_{j+1}$. For the exceptional set $S(0)$ the Rankin's trick is no longer enough, so we combine Cauchy and Markov inequalities to get the additional saving from the small size of the set~$S(0)$. 

The same strategy works for the second-term polynomials $P_m (f, y)$. For the sets $\EuScript{P}$ and $\EuScript{P}(m)$ for $m$ bounded by a large constant $C_1$, the polynomial $P_m (f, y)$ can be trivially bounded by another constant. Next, if $C_1 < m \les 2 \log \log x$, we use a crude bound
$$
\sum_{p \les 2^m} \frac{\tilde \lambda_f (p)}{\sqrt{p}} W (p, x) + \sum_{p \les 2^m} \frac{\lambda_f (p^2)}{p} W (p^2, x) \les 20 \frac{2^{m/2}}{m}.
$$ We compensate the contribution from this factor using Rankin's trick:
\begin{multline*}
\mathop{{\sum}^{h}}_{f \in \EuScript{F} \cap \EuScript{P}(m)} \exp \biggl( \sum_{p \les x} \frac{\tilde \lambda_f (p)}{\sqrt{p}} W (p, x) + \sum_{p \les \sqrt{x}} \frac{\lambda_f (p^2)}{p} W (p^2, x) \biggr) \les \\ 
\exp \biggl( 20 \frac{2^{m/2}}{m} \biggr) \mathop{{\sum}^{h}}_{f \in \EuScript{F}} \exp \biggl( \sum_{2^m < p \les x} \frac{\tilde \lambda_f (p)}{\sqrt{p}} W (p, x) \biggr) \biggl( \frac{P_m (f, y_0)}{1 / m^2} \biggr)^{2M}.
\end{multline*} Here $y_0 = y_0 (m)$. The last factor is small for most $f$ in $\EuScript{F}$. The choice of $M$ is bounded by the restrictions of Lemmas~\ref{lem4} and~\ref{lem5}. For $m > 2 \log \log x$ Rankin's trick is no longer enough, so we apply Cauchy inequality to gain from the small sizes of the sets $\EuScript{P} (m)$.

\subsection{The estimation for $f \in \EuScript{F}$}
\label{subsec6.3}

In this section, we only consider the contribution from the first Dirichlet polynomials $G_i (f, y)$. Precisely, we prove the inequality
\begin{equation} \label{estimate_for_F}
\mathop{{\sum}^{h}}_{f \in \EuScript{F}} \exp \biggl( \sum_{p \les x} \frac{\tilde \lambda_f (p)}{\sqrt{p}} W (p, x) \biggr) \les k L(1, \chi_{-d}) \bigl( \log x \bigl),	
\end{equation} which gives the desired bound in the case $f \in \EuScript{F} \cap \EuScript{P}$ and $f \in \EuScript{F} \cap \EuScript{P}(m)$ for $m \ll 1$. It follows from the fact that the contribution from all the second polynomials $P_m (f, y)$ is trivially bounded by a constant.  

In accordance with the notation we split the sum over $p \les x$ as follows:
$$
\mathop{{\sum}^{h}}_{f \in \EuScript{F}} \exp \biggl( \sum_{p \les x} \frac{\tilde \lambda_f (p)}{\sqrt{p}} W (p, x) \biggr) = \mathop{{\sum}^{h}}_{f \in \EuScript{F}} \prod_{i = 1}^I \exp \biggl(\frac{1}{2}  \sum_{X_{i-1} < p \les X_i} \frac{\tilde \lambda_f (p) }{\sqrt{p}} W (p, x) \biggr)^2.
$$ Next, we expand each factor $\exp(\ldots)$ into a Taylor series using Lagrange error term. Thus, the last expression does not exceed
$$
\mathop{{\sum}^{h}}_{f \in \EuScript{F}} \prod_{i=1}^I \biggl[ \sum_{l \les A_i} \frac{1}{l!} \biggl( \frac{1}{2} G_i (f, x) \biggr)^l + \frac{\exp \bigl( (1/2) |G_i (f, x)| \bigr)}{(A_i + 1)!} \biggl( \frac{1}{2}  \ |G_i (f, x)| \biggr)^{A_i + 1} \biggr]^2,
$$ Choose $A_i = 10 B_i$, so that the error term is sufficiently small. We rewrite the last expression as
\begin{multline} \label{lpowers}
\mathop{{\sum}^{h}}_{f \in \EuScript{F}} \prod_{i=1}^I (1 + \varepsilon_i)^2 \biggl( \sum_{l \les A_i} \frac{1}{l!} \biggl( \frac{1}{2}  \ G_i (f, x) \biggr)^l \biggr)^2 \les \\
(1 + \varepsilon_0) \mathop{{\sum}^{h}}_{f \in \EuScript{F}} \prod_{i=1}^I \biggl( \sum_{l \les A_i} \frac{1}{l!} \biggl( \frac{1}{2} G_i (f, x) \biggr)^l \biggr)^2,
\end{multline} where
\begin{multline*}
\varepsilon_i := \biggl( \sum_{l \les A_i} \frac{1}{l!} \biggl( \frac{1}{2} |G_i (f, x)| \biggr)^l \biggr)^{-1} \frac{\exp \bigl( (1/2) |G_i (f, x)| \bigr)}{(A_i + 1)!} \biggl( \frac{1}{2} |G_i (f, x)| \biggr)^{A_i+1} \les \\
\frac{\exp \bigl( B_i / 2 \bigr)}{(A_i + 1)!} \biggl( \frac{B_i}{2} \biggr)^{A_i + 1},
\end{multline*} and
\begin{multline*}
\prod_{i=1}^I (1 + \varepsilon_i)^2 \les \exp \biggl(  2 \sum_{i=1}^I \varepsilon_i \biggr) = \\
\exp \biggl( 2 \sum_{i=1}^I \exp \biggl( \frac{B_i}{2} - \log (A_i + 1)! + (A_i + 1) \log \frac{B_i}{2} \biggr) \biggr) =: 1 + \varepsilon_0.
\end{multline*} By Stirling formula, the last expression can further be bounded as
\begin{multline*}
\exp \biggl( 2 \sum_{i=1}^I \exp \biggl( \frac{B_i}{2} - (A_i + 1) \log (A_i + 1) + A_i + 1 + (A_i + 1) \log \frac{B_i}{2} \biggr)  \biggr) \les \\
\exp \biggl( 2 \sum_{i=1}^I \exp ( -A_i ) \biggr).
\end{multline*}

The right hand side of~\eqref{lpowers} trivially does not exceed the sum over the full basis $S_k (\Gamma_0 (2))$
$$
(1 + \varepsilon_0) \mathop{{\sum}^{h}}_{f \in S_k ( \Gamma_0 (2) )} \prod_{i=1}^I \biggl( \sum_{l \les A_i} \frac{1}{l!} \biggl( \frac{1}{2} G_i (f, x) \biggr)^l \biggr)^2.
$$ Next, we expand the square and $l$-powers in the right in the last expression:
$$
(1 + \varepsilon_0) \mathop{{\sum}^{h}}_{f \in S_k ( \Gamma_0 (2) )} \sum_{\tilde l, \tilde t}  \biggl(\prod_{i=1}^I \frac{1}{l_i! t_i! 2^{l_i + t_i}} \biggr)  \sum_{\tilde p, \tilde q} \biggl( \prod_{\substack{i \les I \\ r \les l_i \\ s \les t_i}} \frac{\tilde \lambda_f (p_{i,r}) \tilde \lambda_f (q_{i,s})}{\sqrt{p_{i,r} q_{i,s}}} W(p_{i,r}, x) W(q_{i,s}, x) \biggr),
$$ where the sum in the middle runs over all the vectors of the form $\tilde l = (l_1, \ldots, l_I)$, $\tilde t = (t_1, \ldots, t_I)$, such that $0 \les l_i, t_i \les A_i$, and the inner sum runs over all the vectors of primes
\begin{gather*}
\tilde p (\tilde l) = (p_{1,1}, \ldots, p_{1,l_1}, p_{2,1}, \ldots, p_{2,l_2}, \ldots, p_{I,1}, \ldots, p_{I,l_I}), \\
\tilde q (\tilde t) = (q_{1,1}, \ldots, q_{1,t_1}, q_{2,1}, \ldots, q_{2,t_2}, \ldots, q_{I,1}, \ldots, q_{I,t_I}), \\
p_{i, r}, q_{i,s} \in ( X_{i-1}, X_i].
\end{gather*} We rewrite the last expression in the following way:
\begin{multline*}
(1 + \varepsilon_0) \mathop{{\sum}^{h}}_{f \in S_k ( \Gamma_0 (2) )} \prod_{i=1}^I \biggl( \sum_{l,t \les A_i} \frac{1}{l! t! 2^{l+t}} \\
\sum_{\substack{p_1, \ldots, p_l \\ q_1, \ldots, q_t \\ \in ( X_{i-1}, X_i ]}} \frac{\tilde \lambda_f (p_1) \ldots \tilde \lambda_f (p_l) \tilde \lambda_f (q_1) \ldots \tilde \lambda_f (q_t)}{\sqrt{p_1 \ldots p_l q_1 \ldots q_t}} W(p_1, x)\ldots W(p_l, x) W(q_1, x) \ldots W(q_t, x) \biggr),
\end{multline*} which is equivalent to
\begin{multline*}
(1 + \varepsilon_0) \mathop{{\sum}^{h}}_{f \in S_k ( \Gamma_0 (2) )} \prod_{i=1}^I \biggl( \sum_{m \les 2A_i} \frac{1}{2^m} \biggl( \sum_{l+t = m} \frac{1}{l! t!} \biggr) \\ \sum_{X_{i-1} < p_1, \ldots, p_m \les X_i} \frac{\tilde \lambda_f (p_1) \ldots \tilde \lambda_f (p_m)}{\sqrt{p_1 \ldots p_m}} W(p_1, x) \ldots W(p_m, x) \biggr),
\end{multline*} which is
\begin{multline} \label{after_binomial}	
(1 + \varepsilon_0) \mathop{{\sum}^{h}}_{f \in S_k ( \Gamma_0 (2) )} \prod_{i=1}^I \biggl( \sum_{m \les 2 A_i} \frac{1}{m!} \\
\sum_{X_{i-1} < p_1, \ldots, p_m \les X_i} \frac{\tilde \lambda_f (p_1) \ldots \tilde \lambda_f (p_m)}{\sqrt{p_1 \ldots p_m}} W(p_1, x) \ldots W(p_m, x) \biggr). 
\end{multline} Note that the intervals $(X_{i-1}, X_i]$ are disjoint for different values of $i \les I$. Together with inequality
$$
\prod_{i=1}^I X_i^{2A_i} \les \sqrt{k}
$$ it implies that the assumptions of Lemma~\ref{lem5} hold. Take $b_m = (m!)^{-1}$, $N_0 = 1$. By the first part of Lemma~\ref{lem5} the expression~\eqref{after_binomial} is bounded by
$$
(1 + \varepsilon_0) \cdot 3 \frac{k-1}{2\pi^2} \prod_{i=1}^I \biggl( \sum_{n \les A_i} \frac{1}{(2n)!} \frac{(2n)!}{n!} \biggl( \sum_{X_{i-1} < p \les X_i} \frac{1+\chi_{-d}(p)}{p} \biggr)^n \biggr), 
$$ which by Lemma~\ref{lem6} can further be bounded as
$$
(1 + \varepsilon_0) \cdot 3\frac{k-1}{2\pi^2} \exp \biggl( \sum_{p \les x} \frac{1 + \chi_{-d} (p)}{p} \biggr) \les k L(1, \chi_{-d}) \bigl(\log x \bigr).
$$

\subsection{The estimation for $f \in S(j)$}
\label{subsec6.4}

In this section, we show that the contribution from each exceptional set $S(j)$ to the left hand side of~\eqref{original_sum} is small. Precisely, we prove the inequality
\begin{multline} \label{goal_section_sj}
\mathop{{\sum}^{h}}_{f \in S(j)} \exp \biggl( \sum_{p \les X_j} \frac{\tilde \lambda_f (p)}{\sqrt{p}} W(p, X_j) - \sum_{p \les X_j^{1/2}} \frac{1}{p} W(p^2, X_j) + c_0 + \frac{3}{4} \frac{\log d^2 k^4}{\log X_j}  \biggr) \ll  \\
k L(1, \chi_{-d}) \exp \biggl(  -\frac{3^{I-j-1} D}{32} \log \frac{D}{8} \biggr)
\end{multline} for each $1 \les j \les I-1$. Here $D$ is given by the identity $x = k^{1/D}$. Note that we have removed the sum
$$
\sum_{p \les X_j^{1/2}} \frac{\lambda_f (p^2)}{p} W(p^2, X_j)
$$ from the exponent in the left hand side of~\eqref{goal_section_sj}. We will take care of the contribution from this sum in the next part of this section. Next, by the definition of the sets $S(j)$, we have
\begin{multline*}
\mathop{{\sum}^{h}}_{f \in S(j)} \exp \biggl( \sum_{p \les X_j} \frac{\tilde \lambda_f (p)}{\sqrt{p}} W(p, X_j)  \biggr) = \mathop{{\sum}^{h}}_{f \in S(j) } \prod_{i=1}^j \exp \biggl(  \frac{1}{2} \sum_{X_{i-1} < p \les X_i} \frac{\tilde \lambda_f (p)}{\sqrt{p}} W(p, X_j) \biggr)^2 \les \\ 
\mathop{{\sum}^{h}}_{f \in S(j) } \prod_{i=1}^j \exp \biggl( \frac{1}{2} G_i (f, X_j)  \biggr)^2 \biggl( \frac{G_{j+1} (f, y_0)}{B_{j+1}} \biggr)^{2 N_{j+1}}
\end{multline*} for any integer $N_{j+1} > 0$. Similarly to the case $f \in \EuScript{F}$ we expand $\exp((1/2) G_i (f, X_j))$ into a Taylor series and bound the right hand side of the last inequality by
\begin{equation} \label{expression_first_form}
(1 + \varepsilon_0^{(j)}) {B_{j+1}}^{-2N_{j+1}} \mathop{{\sum}^{h}}_{f \in S_k ( \Gamma_0 (2) ) } \bigl( G_{j+1} (f, y_0) \bigr)^{2N_{j+1}} \prod_{i=1}^j \biggl( \sum_{l \les A_i} \frac{1}{l!} \biggl( \frac{1}{2} G_i (f, X_j) \biggr)^l \biggr)^2,
\end{equation} where
$$
1 + \varepsilon_0^{(j)} := \exp \biggl( 2 \sum_{i=1}^j \exp \biggl( \frac{B_i}{2} - \log (A_i + 1)! + (A_i + 1) \log \frac{B_i}{2} \biggr) \biggr). 
$$ Expanding out $2N_{j+1}$- and $l$-powers of polynomials $G_i (f, .)$, we write the expression~\eqref{expression_first_form} as
\begin{multline*}
(1 + \varepsilon_0^{(j)}) \bigl( {B_{j+1}}^{-2N_{j+1}} \bigr) \mathop{{\sum}^{h}}_{f \in S_k ( \Gamma_0 (2) ) } \biggl( \prod_{i=1}^j \sum_{m \les 2 A_i} \frac{1}{m!} \sum_{X_{i-1} < p_1, \ldots, p_m \les X_i} \frac{\tilde \lambda_f (p_1) \ldots \tilde \lambda_f (p_m)}{\sqrt{p_1 \ldots p_m}} \cdot \\
W(p_1, X_j) \ldots W(p_m, X_j) \biggr) \cdot \\
\biggl( \sum_{X_j < p_1, \ldots, p_{2N_{j+1}} \les X_{j+1}} \frac{\tilde \lambda_f (p_1) \ldots \tilde \lambda_f (p_{2N_{j+1}})}{\sqrt{p_1 \ldots p_{2N_{j+1}}}} W(p_1, y_0) \ldots W(p_{2N_{j+1}}, y_0) \biggr).
\end{multline*} By the first part of~Lemma~\ref{lem5}, this does not exceed
\begin{multline*}
(1 + \varepsilon_0^{(j)}) B_{j+1}^{-2N_{j+1}} \cdot 3 \frac{k-1}{2\pi^2}  
\biggl( \prod_{i=1}^j \sum_{n \les A_i} \frac{1}{n!} \biggl( \sum_{X_{i-1} < p \les X_i} \frac{1+\chi_{-d} (p)}{p} \biggr)^n \biggr) \\
\cdot \frac{(2N_{j+1})!}{N_{j+1} !} \biggl( \sum_{ X_j < p \les X_{j+1} } \frac{1+\chi_{-d}(p)}{p} \biggr)^{N_{j+1}}. 
\end{multline*} Then using the bounds
\begin{gather*}
\prod_{i=1}^j \sum_{n \les A_i} \frac{1}{n!} \biggl( \sum_{X_{i-1} < p \les X_i} \frac{1+\chi_{-d}(p)}{p} \biggr)^n \les \exp \biggl( \sum_{p \les X_j} \frac{1 + \chi_{-d} (p)}{p} \biggr), \\
\biggl( \sum_{ X_j < p \les X_{j+1}} \frac{1+\chi_{-d}(p)}{p} \biggr)^{N_{j+1}} \les (\log 3 + 1)^{N_{j+1}}
\end{gather*} and Stirling formula, we get
\begin{multline*}
\mathop{{\sum}^{h}}_{f \in S(j)} \exp \biggl( \sum_{p \les X_j} \frac{\tilde \lambda_f (p)}{\sqrt{p}} W(p, X_j) \biggl) \les \\
(1 + \varepsilon_0^{(j)}) \cdot 3 \frac{k-1}{2\pi^2} \exp \biggl( -2N_{j+1} \log B_{j+1} + 2N_{j+1} \log 2N_{j+1} - 2 N_{j+1} + \log (2N_{j+1}) + \\ 
\frac{1}{2} \log 2 \pi - N_{j+1} \log N_{j+1} + N_{j+1} + \sum_{p \les X_j} \frac{1 + \chi_{-d} (p)}{p} \biggr) \bigl( \log 3 + 1 \bigr)^{N_{j+1}}.   
\end{multline*} By Lemma~\ref{lem6}, the right hand side of the last inequality is bounded by
\begin{multline} \label{after_first_Stirling}
k L(1, \chi_{-d}) \exp \biggl( -2N_{j+1} \log B_{j+1} + N_{j+1} \log N_{j+1} + \\
N_{j+1} \bigl[ 2 \log 2 - 1 + \log (\log 3 + 1)\bigr] + \log N_{j+1} + \log \log X_j + c_2 \biggr) 
\end{multline} with an absolute constant $c_2 > 0$. Following the restrictions of~Lemma~\ref{lem5}
$$
\bigl( \prod_{i=1}^j X_i^{2A_i} \bigr) X_{j+1}^{2N_{j+1}} \les \sqrt{k},
$$ we choose
\begin{gather*}
\log (X_1)^{2A_1} = \frac{1}{3 \sqrt{3}} \biggl( \frac{e}{3} \biggr)^{I-1} \log \sqrt{k}, \\ 
\log (X_I)^{2A_I} = \frac{1}{9} \log \sqrt{k}, \qquad \log (X_{I-1})^{2A_{I-1}} = \frac{1}{9 \sqrt{3}} \log \sqrt{k}, \qquad \ldots, \\
\log (X_j)^{2A_j} = \frac{1}{(\sqrt{3})^{I-j+4}} \log \sqrt{k}, \qquad \ldots, \qquad \log (X_2)^{2A_2} = \frac{1}{(\sqrt{3})^{I+2}} \log \sqrt{k}. 
\end{gather*} Then
\begin{gather*}
A_1 = \frac{1}{12 \sqrt{3}} \biggl( \frac{e}{3} \biggr)^{I-1} \frac{\log k}{\log X_1} = \frac{De^{I-1}}{12 \sqrt{3}}, \\
A_I = \frac{D}{36}, \qquad \ldots, \qquad A_j = \frac{D}{36} \bigl( \sqrt{3} \bigr)^{I-j}, \qquad \ldots, \qquad A_2 = \frac{D}{36} \bigl( \sqrt{3} \bigr)^{I-2}.
\end{gather*} Further, choose
$$
N_{j+1} = \biggl\lfloor \frac{D}{8} 3^{I-j-1} \biggr\rfloor. 
$$ Thus, the expression~\eqref{after_first_Stirling} is bounded by
\begin{multline*}
k L(1, \chi_{-d}) \exp \biggl( -2 \biggl\lfloor \frac{3^{I-j-1} D}{8} \biggr\rfloor \log \frac{\bigl( \sqrt{3} \bigr)^{I-j-1} D}{360} + \biggl\lfloor \frac{3^{I-j-1} D}{8} \biggr\rfloor \log \biggl\lfloor \frac{3^{I-j-1} D}{8} \biggr\rfloor + \\ 
2 \biggl\lfloor \frac{3^{I-j-1} D}{8} \biggr\rfloor + \log \log X_j + c_2 \biggr) \les k L(1, \chi_{-d}) \exp \biggl( -\frac{3^{I-j-1} D}{16} \log \frac{D}{8} + \log \log X_j + c_2 \biggr)
\end{multline*} provided that $D > 10^{14}$. Finally, we get
\begin{multline*}
\mathop{{\sum}^{h}}_{f \in S(j)} \exp \biggl( \sum_{p \les X_j} \frac{\tilde \lambda_f (p)}{\sqrt{p}} W(p, X_j) - \sum_{p \les X_j^{1/2}} \frac{1}{p} W(p^2, X_j) + c_0 + \frac{3}{4} \frac{\log d^2 k^4}{\log X_j}  \biggr) \les \\
k L(1, \chi_{-d}) \exp \biggl( -\frac{3^{I-j-1} D}{16} \log \frac{D}{8} + c_3 + 6 D \cdot 3^{I-j} \max \biggl( 1, \frac{1}{2} \frac{\log d}{\log k} \biggr) \biggr) \les \\ \les k L(1, \chi_{-d}) \exp \biggl( -\frac{3^{I-j-1} D}{32} \log \frac{D}{8} + c_3 \biggr)
\end{multline*} as soon as
\begin{equation} \label{restriction_D}
D \ges  8 \exp \biggl( 600 \max \biggl( 1, \frac{1}{2} \frac{\log d}{\log k} \biggr) \biggr).
\end{equation}

\subsection{The estimation for $f \in \EuScript{F} \cap \EuScript{P}(m)$}
\label{subsec6.5}

In this section, we take into account the contribution from the second Dirichlet polynomial
$$
\sum_{p \les \sqrt{x}} \frac{\lambda_f (p^2)}{p} W(p^2, x).
$$ We will obtain the bound of the form
$$
\sum_m \mathop{{\sum}^{h}}_{f \in \EuScript{F} \cap \EuScript{P}(m)} \exp \biggl( \sum_{p \les x} \frac{\tilde \lambda_f (p)}{\sqrt{p}} W(p, x) + \sum_{p \les \sqrt{x}} \frac{\lambda_f (p^2)}{p} W(p^2, x) \biggr) \ll
k L(1, \chi_{-d}) \bigl( \log x \bigr).
$$

The restriction in the sum over $m$ is $m \les (2 \log 2)^{-1} (\log x)$. The most contribution comes from the subsets $f \in \EuScript{F} \cap \EuScript{P}(m)$ for small $m$. Let us fix a large constant $C_1 > 0$. If $0 \les m < C_1$ we bound the second polynomial trivially:
\begin{equation}\label{smallest_m}
\sum_{p \les \sqrt{x}} \frac{\lambda_f (p^2)}{p} W(p^2, x) \les \sum_{p \les \sqrt{2^{m+1}}} \frac{3}{p} + \sum_{n=m+1}^{+\infty} \frac{1}{n^2} \les C_2.
\end{equation} The same clearly holds true for $f \in \EuScript{F} \cap \EuScript{P}$. Together with the estimate~\eqref{estimate_for_F} obtained in Section~\ref{subsec6.3} it gives the desired bound. \\

Now we consider two more cases: \\

1) $C_1 \les m \les 2 \log \log x$ \\

Using partial summation and Ramanujan bounds $|\tilde \lambda_f (p)| \les 4$, $|\lambda_f (p^2)| \les 3$, we get
\begin{equation} \label{Ramanujan_bound}
\sum_{p \les 2^{m+1}} \frac{\tilde \lambda_f (p)}{\sqrt{p}} W(p, x) + \sum_{p \les 2^{m+1}} \frac{\lambda_f (p^2)}{p} W(p^2, x) \les  c_4 \frac{2^{m/2}}{m}
\end{equation} with an absolute constant $c_4 > 0$. Let $i_m$ be the largest index so that $X_{i_m} \les 2^{m+1}$. We have
\begin{multline*}
\mathop{{\sum}^{h}}_{f \in \EuScript{F} \cap \EuScript{P}(m)} \exp \biggl( \sum_{p \les x} \frac{\tilde \lambda_f (p)}{\sqrt{p}} W(p, x) + \sum_{p \les \sqrt{x}} \frac{\lambda_f (p^2)}{p} W(p^2, x) \biggr) \les \\ 
\exp \biggl(  c_4\frac{2^{m/2}}{m} \biggr) \mathop{{\sum}^{h}}_{f \in \EuScript{F} \cap \EuScript{P}(m)} \exp \biggl( \sum_{2^{m+1} < p \les x} \frac{\tilde \lambda_f (p)}{\sqrt{p}} W(p, x) + \sum_{n > m} \frac{1}{n^2} \biggr) \les \\ 
\exp \biggl(  c_4\frac{2^{m/2}}{m} + c_5 \biggr) \mathop{{\sum}^{h}}_{f \in \EuScript{F} \cap \EuScript{P}(m)} \biggl( \prod_{i=i_m}^I \exp \biggl( \frac{1}{2} \sum_{\substack{X_i < p \les X_{i+1} \\ p > 2^{m+1}}} \frac{\tilde \lambda_f (p)}{\sqrt{p}} W(p, x) \biggr)^2 \biggr).
\end{multline*} Due to the definition of the sets $\EuScript{P}(m)$ there is $z_0 > 2^{m+1}$, so that the last expression does not exceed
$$
\exp \biggl(  c_4\frac{2^{m/2}}{m} + c_5 \biggr) \mathop{{\sum}^{h}}_{f \in \EuScript{F}} \biggl( \prod_{i=i_m}^I \exp \biggl( \frac{1}{2} \sum_{\substack{X_i < p \les X_{i+1} \\ p > 2^{m+1}}} \frac{\tilde \lambda_f (p)}{\sqrt{p}} W(p, x) \biggl)^2 \biggr) \cdot \biggl( \frac{P_m (f, z_0)}{1/m^2} \biggr)^{2M}. 
$$ Repeating the same steps as in the estimation for $f \in \EuScript{F}$, we bound the last expression by
\begin{multline*}
\exp \biggl(  c_4\frac{2^{m/2}}{m} + c_5 \biggr) (1 + \varepsilon_{0,m}) m^{4M} \cdot \\ \mathop{{\sum}^{h}}_{f \in S_k (\Gamma_0 (2))} \biggl( \prod_{i = i_m}^I \sum_{n \les 2 A_i} \frac{1}{n!} \sum_{\substack{X_i < p_1, \ldots, p_n \les X_{i+1} \\ p_l > 2^{m+1}}} \frac{\tilde \lambda_f (p_1) \ldots \tilde \lambda_f (p_n)}{\sqrt{p_1 \ldots p_n}} W(p_1, x) \ldots W(p_n, x) \biggr) \cdot \\ 
\biggl( \sum_{2^m < p_1, \ldots, p_{2M} \les 2^{m+1}} \frac{\lambda_f (p_1^2) \ldots \lambda_f (p_{2M}^2)}{p_1 \ldots p_{2M}} W(p_1^2, z_0) \ldots W(p_{2M}^2, z_0) \biggr),
\end{multline*} where
$$
1 + \varepsilon_{0,m} := \exp \biggl( 2 \sum_{i=i_m}^I \exp \biggl( \frac{B_i}{2} - \log (A_i + 1)! + (A_i + 1) \log \frac{B_i}{2} \biggr) \biggr).
$$ The corresponding subsets of primes in all the sums in the last expression are disjoint. Then, applying the second part of Lemma~\ref{lem5} (with $M_0 = M$), we get the bound
\begin{multline*}
\exp \biggl(  c_4\frac{2^{m/2}}{m} + c_5 \biggr) (1 + \varepsilon_{0,m}) m^{4M} \cdot
3 \frac{k-1}{2\pi^2} \exp \biggl( \sum_{2^{m+1} < p \les x} \frac{1+\chi_{-d} (p)}{p} \biggr) \cdot \\
\sqrt{2\pi} M 4^M \frac{(2M)!}{M!} \biggl( \frac{1}{2^m} \biggr)^M.
\end{multline*} By Stirling formula and Lemma~\ref{lem6}, it does not exceed
$$
k L(1, \chi_{-d}) \exp \biggl( c_4 \frac{2^{m/2}}{m} + 4M \log m + \log \log x + M \log M +
2M + \log M - mM \log 2 + c_6 \biggr).
$$ Choosing $M = \lfloor 2^{m/2} \rfloor$, we bound this by
\begin{equation}\label{small_m}
k L(1, \chi_{-d}) \exp \bigl( - m + \log \log x  \bigr)
\end{equation} since $m \ges  C_1$. To satisfy the restrictions of Lemma~\ref{lem5}, we require
$$
\log \bigl( 2^{m+1} \bigr)^{2M} \les \frac{1}{100} \log \sqrt{k}, 
$$ which is equivalent to
$$  
m \frac{\log 2}{2} \les \bigl(1 - o(1)\bigr) \log \log k  = \bigl(1 - o(1)\bigr) \log \log (x).
$$ This implies
$$
m \les \frac{2}{\log (2)} \log \log x,
$$ which is satisfied by the choice of $m$. Note that the restrictions of Lemma~\ref{lem5} are satisfied since
$$
\biggl( \prod_{i=1}^j X_i^{2A_i} \biggr) X_{j+1}^{2N_{j+1}} (2^{m+1})^{2M} \les \sqrt{k}
$$ due to the choice of the parameters $A_i$'s, $N_{j+1}$ and $M$. \\

2) $m > 2 \log \log x$ \\

Combining Cauchy inequality and Ramanujan bound $|\lambda_f (p^2) | \les 3$, we get
\begin{multline}\label{measure1}
\mathop{{\sum}^{h}}_{f \in \EuScript{F} \cap \EuScript{P}(m)} \exp \biggl( \sum_{p \les x} \frac{\tilde \lambda_f (p)}{\sqrt{p}} W(p, x) + \sum_{p \les \sqrt{x}} \frac{\lambda_f (p^2)}{p} W(p^2, x) \biggr) \les \\ 
\exp \biggl( 3 \log \log (2^{m+1}) + c_7 \biggr) \sqrt{\meas \EuScript{P}(m)} \cdot \biggl( \mathop{{\sum}^{h}}_{f \in \EuScript{F}} \exp \biggl( 2 \sum_{p \les x} \frac{\tilde \lambda_f (p)}{\sqrt{p}} W(p, x) \biggr) \biggr)^{1/2}.
\end{multline} 

To estimate the measure of $\EuScript{P}(m)$, we use Markov inequality and part 2) of Lemma~\ref{lem4}:
\begin{multline} \label{PM_measure_estimate}
\meas \EuScript{P}(m) \les \biggl( \frac{1}{m^2} \biggr)^{-4} \mathop{{\sum}^{h}}_{f \in S_k (\Gamma_0 (2))} \biggl( \sum_{2^m \les p < 2^{m+1}} \frac{\lambda_f (p^2)}{p} W(p^2, z_0) \biggr)^4 \les \\ 
m^8 \mathop{{\sum}^{h}}_{f \in S_k (\Gamma_0 (2))} \sum_{2^m < p_1, p_2, p_3, p_4 \les 2^{m+1}} \frac{\lambda_f (p_1^2) \lambda_f (p_2^2) \lambda_f (p_3^2) \lambda_f (p_4^2)}{p_1 p_2 p_3 p_4} W(p_1^2, z_0) \ldots W(p_4^2, z_0) \les \\ 
m^8 \cdot 2\sqrt{2\pi} \cdot 16 \frac{k-1}{2\pi^2} \cdot 12 \biggl( \frac{1}{2^m} \biggr)^2 \les k \exp \biggl( 8 \log m - 2m \log 2 + c_8 \biggr).
\end{multline} We estimate the second factor in the right hand side of~\eqref{measure1} with the usual approach. Applying to this expression the first part of Lemma~\ref{lem5} with $b_m = 2^m / m!$, we bound the second factor by
$$
\biggl( (1+\varepsilon_0) \cdot 3 \frac{k-1}{2\pi^2} \prod_{i=1}^I \biggl( \sum_{n \les A_i} \frac{4^n}{n!} \biggl( \sum_{X_{i-1} < p \les X_i} \frac{1+\chi_{-d}(p)}{p} \biggr)^n \biggr) \biggr)^{1/2}.
$$ Next, applying the conditional upper bound $L(1, \chi_{-d}) \ll \log \log d$ (see, for example,~\cite{Lamzouri_Li_Sound}), we estimate the right hand side of~\eqref{measure1} by:
\begin{multline} \label{large_m}
\exp \bigl( 3 \log m + c_9 \bigr) \sqrt{k} \exp \bigl( 4 \log m - m \log 2 + c_8 / 2 \bigr) \cdot \\
\sqrt{k} L (1, \chi_{-d})^2 \exp \bigl( 2 \log \log x \bigr) \les k  L (1, \chi_{-d})^2  \exp \bigl( - 0.01 m + \log \log x + c_{10} \bigr) \les \\
k  L (1, \chi_{-d})  \exp \bigl( - 0.01 m + \log \log x + \log \log \log d + c_{11} \bigr).
\end{multline} Thus, combining \eqref{smallest_m}, \eqref{small_m} and \eqref{large_m}, we obtain
\begin{multline*}
\biggl( \mathop{{\sum}^{h}}_{f \in \EuScript{F} \cap \EuScript{P}} + \sum_{0 \les m \les \log x /  (2 \log 2) } \mathop{{\sum}^{h}}_{f \in \EuScript{F} \cap \EuScript{P}(m)} \biggr) \exp \biggl( \sum_{p \les x} \frac{\tilde \lambda_f (p)}{\sqrt{p}} W(p, x) + \\
\sum_{p \les \sqrt{x}} \frac{\lambda_f (p^2)}{p} W(p^2, x) \biggr) \les
2C_1 k L(1, \chi_{-d}) \exp \bigl( C_2 + \log \log x \bigr) + \\
\sum_{m \ges C_1} k L(1, \chi_{-d}) \exp \bigl( -m + \log \log x \bigr) + \\ 
\sum_{m \ges  2 \log \log x} k L(1, \chi_{-d}) \exp \bigl( -0.01 m + \log \log x + \log \log \log d + c_{11} \bigr) \les \\
k L(1, \chi_{-d}) \exp \bigl(\log \log x + c_{12} \bigr)
\end{multline*} since $d$ is bounded by a power of $x$. So we get
\begin{multline*}
\mathop{{\sum}^{h}}_{f \in \EuScript{F}} \exp \bigl( \log \bigl| L_1 (f) L_2 (f) \bigr| \bigr) \les \\
k L(1, \chi_{-d}) \exp \biggl( \log \log x + c_{12} - \sum_{p \les \sqrt{x}} \frac{W(p^2, x)}{p} + c_0 + \frac{3}{4} \frac{\log d^2 k^4}{\log x} \biggr).
\end{multline*} The sum over $p \les x$ in the right hand side of the last inequality can trivially be estimated from below as
$$
\sum_{p \les x} \frac{W(p^2, x)}{p} \ges \log \log x - c_{13}.
$$ Finally, we get
\begin{equation} \label{final_bound_for_F}
\mathop{{\sum}^{h}}_{f \in \EuScript{F}} \exp \bigl( \log \bigl| L_1 (f) L_2 (f) \bigr| \bigr) \les k L(1, \chi_{-d}) \exp \biggl( c_{12} + c_{13} + c_0 + \frac{3}{4} \frac{\log d^2 k^4}{\log x} \biggr).
\end{equation}

\subsection{The estimation for $f \in S(j) \cap \EuScript{P}(m)$}
\label{subsec6.6}

The upper bound for the sum over $f \in S(j) \cap \EuScript{P}(m)$ is obtained in a similar way. Here we combine the approaches from sections~\ref{subsec6.4} and~\ref{subsec6.5}. 

First, assume $0 \les m < C_1$. Combining together the bounds~\eqref{smallest_m} and~\eqref{goal_section_sj}, we get
\begin{multline} \label{Sj_smallest_m}
	\biggl( \mathop{{\sum}^{h}}_{f \in S (j) \cap \EuScript{P}} + \sum_{0 \les m < C_1} \mathop{{\sum}^{h}}_{f \in S(j) \cap \EuScript{P}(m)} \biggr) \exp \biggl( \sum_{p \les X_j} \frac{\tilde \lambda_f (p)}{\sqrt{p}} W(p, X_j) + \\
	\sum_{p \les X_j^{1/2}} \frac{\lambda_f(p^2)-1}{p} W(p^2, X_j) +
	c_0 + \frac{3}{4} \frac{\log d^2 k^4}{\log X_j} \biggr) \ll kL(1, \chi_{-d}) \exp \biggl( -\frac{3^{I-j-1}D}{32} \log \frac{D}{8} \biggr).
\end{multline} Now consider the remaining two cases: \\

1) $C_1 \les m \les 2 \log \log X_j$ \\

Again, let $i_m$ be the largest index so that $X_{i_m} \les 2^{m+1}$. Applying~\eqref{Ramanujan_bound}, we have
\begin{multline*}
\mathop{{\sum}^{h}}_{f \in S(j) \cap \EuScript{P}(m)} \exp \biggl( \sum_{p \les X_j} \frac{\tilde \lambda_f (p)}{\sqrt{p}} W(p, X_j) + \sum_{p \les X_j^{1/2}} \frac{\lambda_f (p^2)}{p} W(p^2, X_j) \biggr) \les \\ 
\exp \biggl(  c_4\frac{2^{m/2}}{m} \biggr) \mathop{{\sum}^{h}}_{f \in S(j) \cap \EuScript{P}(m)} \exp \biggl( \sum_{2^{m+1} < p \les X_j} \frac{\tilde \lambda_f (p)}{\sqrt{p}} W(p, X_j) + \sum_{n > m} \frac{1}{n^2} \biggr) \les \\ 
\exp \biggl(  c_4\frac{2^{m/2}}{m} + c_5 \biggr) \mathop{{\sum}^{h}}_{f \in S(j) \cap \EuScript{P}(m)} \biggl( \prod_{i=i_m}^j \exp \biggl( \frac{1}{2} \sum_{\substack{X_i < p \les X_{i+1} \\ p > 2^{m+1}}} \frac{\tilde \lambda_f (p)}{\sqrt{p}} W(p, X_j) \biggr)^2 \biggr) \cdot \\
\biggl( \frac{G_{j+1} (f, y_0)}{B_{j+1}} \biggr)^{2N_{j+1}}.
\end{multline*} By definition of $\EuScript{P}(m)$, there is $z_0 > 2^{m+1}$, so that it does not exceed
\begin{multline*}
\exp \biggl(  c_4\frac{2^{m/2}}{m} + c_5 \biggr) \mathop{{\sum}^{h}}_{f \in S(j)} \biggl( \prod_{i=i_m}^I \exp \biggl( \frac{1}{2} \sum_{\substack{X_i < p \les X_{i+1} \\ p > 2^{m+1}}} \frac{\tilde \lambda_f (p)}{\sqrt{p}} W(p, X_j) \biggr)^2 \biggr) \cdot \\
\biggl( \frac{G_{j+1} (f, y_0)}{B_{j+1}} \biggr)^{2N_{j+1}} \cdot \biggl( \frac{P_m (f, z_0)}{1/m^2} \biggr)^{2M}. 
\end{multline*} By the standard argument this expression can be bounded by
\begin{multline*}
\exp \biggl(  c_4\frac{2^{m/2}}{m} + c_5 \biggr) (1 + \varepsilon_{0,m}^{(j)}) m^{4M} B_{j+1}^{-2N_{j+1}} \cdot \\ 
\mathop{{\sum}^{h}}_{f \in S_k (\Gamma_0 (2))} \biggl( \prod_{i = i_m}^j \sum_{n \les 2 A_i} \frac{1}{n!} \sum_{\substack{X_i < p_1, \ldots, p_n \les X_{i+1} \\ p_l > 2^{m+1}}} \frac{\tilde \lambda_f (p_1) \ldots \tilde \lambda_f (p_n)}{\sqrt{p_1 \ldots p_n}} W(p_1, x) \ldots W(p_n, x) \biggr) \cdot \\
\biggl( \sum_{X_j < p_1, \ldots, p_{2N_{j+1}} \les X_{j+1}} \frac{\tilde \lambda_f (p_1) \ldots \tilde \lambda_f (p_{2N_{j+1}})}{\sqrt{p_1 \ldots p_{2N_{j+1}}}} W(p_1, y_0) \ldots W(p_{2N_{j+1}}, y_0) \biggr) \cdot \\
\biggl( \sum_{2^m < p_1, \ldots, p_{2M} \les 2^{m+1}} \frac{\lambda_f (p_1^2) \ldots \lambda_f (p_{2M}^2)}{p_1 \ldots p_{2M}} W(p_1^2, z_0) \ldots W(p_{2M}^2, z_0) \biggr),
\end{multline*} where
$$
1 + \varepsilon_{0,m}^{(j)} := \exp \biggl( 2 \sum_{i=i_m}^j \exp \biggl( \frac{B_i}{2} - \log (A_i + 1)! + (A_i + 1) \log \frac{B_i}{2} \biggr) \biggr).
$$ The corresponding subsets of primes in all the sums in the last expression are disjoint. Then, applying the second part of Lemma~\ref{lem5} (with $M_0 = M$, $N_0 = N_{j+1}$), we get the bound
\begin{multline*}
\exp \biggl(  c_4\frac{2^{m/2}}{m} + c_5 \biggr) (1 + \varepsilon_{0,m}^{(j)}) m^{4M} B_{j+1}^{-2N_{j+1}} \cdot
3 \frac{k-1}{2\pi^2} \exp \biggl( \sum_{2^{m+1} < p \les x} \frac{1+\chi_{-d} (p)}{p} \biggr) \cdot \\
\frac{(2N_{j+1})!}{N_{j+1}!} \biggl( \sum_{X_j < p \les 2X_{j+1}} \frac{1+\chi_{-d}(p)}{p} \biggr)^{N_{j+1}} \cdot \sqrt{2\pi} M 4^M \frac{(2M)!}{M!} \biggl( \frac{1}{2^m} \biggr)^M.
\end{multline*} By Stirling formula and Lemma~\ref{lem6}, it does not exceed
\begin{multline*}
k L(1, \chi_{-d}) \exp \biggl( -2N_{j+1} \log B_{j+1} + N_{j+1} \log N_{j+1} + c_1' N_{j+1} + \log N_{j+1} + \log \log X_{j+1} + \\ 
c_4 \frac{2^{m/2}}{m} + 4M \log m + \log \log x + M \log M +
2M + \log M - mM \log 2 + c_6' \biggr).
\end{multline*} Choosing $M = \lfloor 2^{m/2} \rfloor$, and taking all the same parameters $A_i$'s, $B_{j+1}$, $N_{j+1}$, and $D$ as in Section~\ref{subsec6.4}, we bound the last expression by
$$
k L(1, \chi_{-d}) \exp \biggl( - m - \frac{3^{I-j-D}}{16} \log \frac{D}{8} + \log \log X_j + c_2' \biggr).
$$ Then, finally, for $m \les 2 \log \log X_j$,
\begin{multline} \label{Sj_small_m}
\mathop{{\sum}^{h}}_{f \in S(j) \cap \EuScript{P}(m)} \exp \biggl( \sum_{p \les X_j} \frac{\tilde \lambda_f (p)}{\sqrt{p}} W(p, X_j) + \sum_{p \les X_j^{1/2}} \frac{\lambda_f(p^2)-1}{p} W(p^2, X_j) + \\
c_0 + \frac{3}{4} \frac{\log d^2 k^4}{\log X_j} \biggr) \ll kL(1, \chi_{-d}) \exp \biggl( -m - \frac{3^{I-j-1} D}{32} \log \frac{D}{8} \biggr).
\end{multline} \\

2) $m > 2 \log \log X_j$ \\

Applying Cauchy inequality and Ramanujan bound, we obtain
\begin{multline} \label{measure_2}
\mathop{{\sum}^{h}}_{f \in S(j) \cap \EuScript{P}(m)} \exp \biggl( \sum_{p \les X_j} \frac{\tilde \lambda_f (p)}{\sqrt{p}} W(p, X_j) + \sum_{p \les \sqrt{X_j}} \frac{\lambda_f (p^2)}{p} W(p^2, X_j) \biggr) \les \\ 
\exp \biggl( 3 \log \log (2^{m+1}) + c_7 \biggr) \sqrt{\meas \EuScript{P}(m)} \cdot \biggl( \mathop{{\sum}^{h}}_{f \in S(j)} \exp \biggl( 2 \sum_{p \les X_j} \frac{\tilde \lambda_f (p)}{\sqrt{p}} W(p, X_j) \biggr) \biggr)^{1/2}.
\end{multline} 

The measure of $\EuScript{P}(m)$ is estimated in~\eqref{PM_measure_estimate}
$$
\meas \EuScript{P}(m) \les k \exp \biggl( 8 \log m - 2m \log 2 + c_8 \biggr).
$$ To treat the second factor in the right hand side of~\eqref{measure_2}, we proceed as in Section~\ref{subsec6.4}. It is bounded by
$$
\biggl( \mathop{{\sum}^{h}}_{f \in S(j)} \exp \biggl( \sum_{p \les X_j} \frac{\tilde \lambda_f (p)}{\sqrt{p}} W(p, X_j) \biggr)^2 \biggl( \frac{G_{j+1} (f, y_0)}{B_{j+1}} \biggr)^{2N_{j+1}} \biggr)^{1/2}.
$$ Applying to this expression the second part of Lemma~\ref{lem5} with $N_0 = N_{j+1}$, $M_0 = 0$, we estimate it by
\begin{multline*}
\biggl( (1+\varepsilon_0^{(j)}) B_{j+1}^{-2N_{j+1}} \cdot 3 \frac{k-1}{2\pi^2} \prod_{i=1}^j \biggl( \sum_{n \les A_i} \frac{4^n}{n!} \biggl( \sum_{X_{i-1} < p \les X_i} \frac{1+\chi_{-d}(p)}{p} \biggr)^n \biggr) \cdot \\
\frac{(2N_{j+1})!}{N_{j+1}!} \biggl( \sum_{X_j < p \les X_{j+1}} \frac{1+\chi_{-d}(p)}{p} \biggr)^{N_{j+1}} \biggr)^{1/2}.
\end{multline*} 

Then similarly to~\eqref{large_m}, and with the same choice of parameters $A_i$, $B_{j+1}$, $N_{j+1}$, and $D$ as earlier, we bound the right hand side of~\eqref{measure_2} by
\begin{multline*} 
\exp \bigl( 3 \log m + c_9 \bigr) \sqrt{k} \exp \biggl( 4 \log m - m \log 2 + \frac{c_8}{2} \biggr) \cdot \\
\sqrt{k} L (1, \chi_{-d})^2 \exp \biggl( 2 \log \log X_j - \frac{3^{I-j-1}D}{32} \log \frac{D}{8} + \frac{c_2}{2} \biggr) \les \\ 
k  L (1, \chi_{-d})^2  \exp \biggl( - 0.01 m + \log \log X_j - \frac{3^{I-j-1}D}{32} \log \frac{D}{8} + c_{10}' \biggr) \les \\
k  L (1, \chi_{-d})  \exp \biggl( - 0.01 m + \log \log X_j + \log \log \log d - \frac{3^{I-j-1}D}{32} \log \frac{D}{8} + c_{11}' \biggr).
\end{multline*} Then, for $m > \log \log X_j$,
\begin{multline} \label{Sj_large_m}
\mathop{{\sum}^{h}}_{f \in S(j) \cap \EuScript{P}} \exp \biggl( \sum_{p \les X_j} \frac{\tilde \lambda_f (p)}{\sqrt{p}} W(p, X_j) + \sum_{p \les X_j^{1/2}} \frac{\lambda_f (p^2)}{p} W(p^2, X_j) \biggr) \les \\
k L(1, \chi_{-d}) \exp \biggl( -0.01 m - \frac{3^{I-j-1}D}{32} \log \frac{D}{8} + \log \log X_j + \log \log \log d + c_{11}' \biggr).
\end{multline}

Thus, combining~\eqref{Sj_smallest_m}, \eqref{Sj_small_m} and~\eqref{Sj_large_m}, we obtain
\begin{multline*}
\biggl( \mathop{{\sum}^{h}}_{f \in S(j) \cap \EuScript{P}} + \sum_{0 \les m \les \log X_j /  (2 \log 2) } \mathop{{\sum}^{h}}_{f \in S(j) \cap \EuScript{P}(m)} \biggr) \exp \biggl( \sum_{p \les X_j} \frac{\tilde \lambda_f (p)}{\sqrt{p}} W(p, X_j) + \\
\sum_{p \les X_j^{1/2}} \frac{\lambda_f (p^2)-1}{p} W(p^2, X_j) +
c_0 + \frac{3}{4} \frac{\log d^2 k^4}{\log X_j} \biggr) \les \\
2C_1 k L(1, \chi_{-d}) \exp \biggl( -\frac{3^{I-j-1}D}{32} \log \frac{D}{8} \biggr) + \\
\sum_{m \ges C_1} k L(1, \chi_{-d}) \exp \biggl( -m - \frac{3^{I-j-1}D}{32} \log \frac{D}{8}  \biggr) + 
\sum_{m \ges  2 \log \log X_j} k L(1, \chi_{-d}) \cdot \\ 
\exp \biggl( -0.01 m - \frac{3^{I-j-1}D}{32} \log \frac{D}{8} + \log \log \log d + c_{12}' + \frac{3}{4} \frac{\log d^2 k^4}{\log X_j}  \biggr) \les \\
k L(1, \chi_{-d}) \exp \biggl( - c_{13}' \frac{3^{I-j-1}D}{64} \log \frac{D}{8} \biggr)
\end{multline*} for the appropriate constant $c_{13}' > 0$. Finally, we get
\begin{equation} \label{final_bound_for_Sj}
\mathop{{\sum}^{h}}_{f \in S(j)} \exp \bigl( \log \bigl| L_1 (f) L_2 (f) \bigr| \bigr) \les k L(1, \chi_{-d}) \exp \biggl( - c_{13}' \frac{3^{I-j-1} D}{64} \log \frac{D}{8} + c_{14} \biggr).
\end{equation}

\subsection{The estimation for $f \in S(0)$}
\label{subsec6.7}

To estimate the contribution coming from $f \in S(0)$ we need the following crude upper bound for the second moment:
\begin{equation} \label{Sound_bound}
\mathop{{\sum}^{h}}_{f \in S(0)} \exp \bigl( 2 \log \bigl|L_1 (f) L_2 (f) \bigr| \bigr) \les k (\log k)^7 L(1, \chi_{-d})^2,
\end{equation} which is obtained by Soundararajan's method. The proof of this bound is given in Section~\ref{subsec6.8}.

By Cauchy inequality,
$$
\mathop{{\sum}^{h}}_{f \in S(0)} \exp \bigl( \log \bigl|L_1 (f) L_2 (f) \bigr| \bigr) \les \sqrt{ \meas S(0)} \biggl( \mathop{{\sum}^{h}}_{f \in S(0)} \exp \bigl( 2 \log \bigl|L_1 (f) L_2 (f) \bigr| \bigr) \biggr)^{1/2}.
$$ The second factor can be bounded by~\eqref{Sound_bound}. Then we get
\begin{equation} \label{whole_sum_S0}
\mathop{{\sum}^{h}}_{f \in S(0)} \exp \bigl( \log \bigl|L_1 (f) L_2 (f) \bigr| \bigr) \les \sqrt{ \meas S(0) } \biggl( k (\log k)^7 L (1, \chi_{-d})^2 \biggr)^{1/2}.
\end{equation} The first factor is estimated by Markov inequality together with the first part of Lemma~\ref{lem4}:
\begin{multline*}
\meas S(0) \les B_1^{-2 N_1} \mathop{{\sum}^{h}}_{f \in S_k (\Gamma_0 (2))} \biggl( \sum_{p \les X_1} \frac{\tilde \lambda_f (p)}{\sqrt{p}} W(p, X_1) \biggr)^{2 N_1} \les \\ 
\exp \bigl(  -  2N_1 \log B_1 \bigr) \cdot 2 \frac{k-1}{2\pi^2} \frac{(2N_1)!}{N_1!} \biggl( \sum_{p \les X_1} \frac{1+\chi_{-d}(p)}{p} \biggr)^{N_1}.
\end{multline*} By Stirling formula, the right hand side of~\eqref{whole_sum_S0} does not exceed
\begin{equation} \label{whole_sum_S0_2}
k (\log k)^{7/2} L(1, \chi_{-d}) \exp \biggl( - N_1 \log B_1 + \frac{1}{2} N_1 \log N_1 + c_{15} N_1 + \frac{N_1}{2} \log \log \log x \biggr).
\end{equation} Choose
$$
N_1 = \biggl\lfloor \frac{B_1}{2} \biggr\rfloor = \biggl\lfloor \frac{D}{240\sqrt{3}} e^{I-1} \biggr\rfloor.
$$ This choice meets the restrictions of Lemmas~\ref{lem4} and~\ref{lem5}. Then~\eqref{whole_sum_S0_2} does not exceed
$$
k L(1, \chi_{-d}) \exp \biggl( \frac{7}{2} \log \log k - \frac{D}{10000} \log \log x \biggr) \les
k L(1, \chi_{-d}) \exp\bigl( -\log \log k \bigr)
$$ since $\log \log x = (1 + o(1)) \log \log k$. Thus, we get
\begin{equation} \label{final_bound_for_S0}
\mathop{{\sum}^{h}}_{f \in S(0)} \exp \bigl( \log \bigl|L_1 (f) L_2 (f) \bigr| \bigr) \les k L(1, \chi_{-d}) \exp \bigl( -\log \log k \bigr).
\end{equation}

Combining~\eqref{final_bound_for_S0} together with final bounds for $\EuScript{F}$~\eqref{final_bound_for_F} and $S(j)$~\eqref{final_bound_for_Sj} we finally get
\begin{multline*}
\mathop{{\sum}^{h}}_{f \in S_k (\Gamma_0 (2))} \exp \bigl( \log \bigl| L_1 (f) L_2 (f) \bigr| \bigl) \les
k L(1, \chi_{-d}) \biggl\{  \exp \biggl( c_{12} + c_{13} + c_0 + \frac{3}{4} \frac{\log d^2 k^4}{\log x} \biggr) + \\
\sum_{j=1}^{I-1} \exp \biggl( - c_{13}' \frac{3^{I-j-1} D}{64} \log \frac{D}{8} + c_{14} \biggr) + \exp \bigl( -\log \log k \bigr) \biggr\} \les \\
k L(1, \chi_{-d}) \exp \biggl( c_{16} + 6D \max \biggl( 1, \frac{1}{2} \frac{\log d}{\log k}  \biggr) \biggr).
\end{multline*} Given the restriction~\eqref{restriction_D} we take
$$
D = 8 \exp\biggl( 600 \max \biggl( 1, \frac{1}{2} \frac{\log d}{\log k} \biggr) \biggr).
$$ Thus,
\begin{multline*}
\mathop{{\sum}^{h}}_{f \in S_k (\Gamma_0 (2))} \exp \bigl( \log \bigl| L_1 (f) L_2 (f) \bigr| \bigl) \les \\
k L(1, \chi_{-d}) \exp\biggl( c_{16} + 48 \max \biggl( 1, \frac{1}{2} \frac{\log d}{\log k}  \biggr) \exp \biggl( 600 \max \biggl( 1, \frac{1}{2} \frac{\log d}{\log k}  \biggr) \biggr) \biggr)
\end{multline*} as desired. To complete the proof of Theorem~\ref{thm2} we only need to show~\eqref{Sound_bound}.

\subsection{Proof of~\eqref{Sound_bound}}
\label{subsec6.8}

We write the second moment as follows:
$$
\mathop{{\sum}^{h}}_{f \in S_k (\Gamma_0 (2))} \bigl| L_1 (f) L_2 (f) \bigr|^2 = \int_{-\infty}^{+\infty} e^{2V} \meas \biggl\{ f \in S_k (\Gamma_0 (2)) : \log \bigl| L_1 (f) L_2 (f) \bigr| > V  \biggr\} dV.
$$ Next, split the last expression into the sum of two integrals
$$
I_1 + I_2 := \int_{-\infty}^{3 \log \log k} e^{2V} \meas \bigl\{ \ \ldots \  \bigr\} dV + \int_{3 \log \log k}^{+\infty} e^{2V} \meas \bigl\{ \ \ldots \ \bigr\} dV. 
$$ The first integral can be estimated directly:
$$
I_1 \les \frac{k-1}{2\pi^2} \int_{-\infty}^{3 \log \log k} e^{2V} dV \les k (\log k)^6 \les \frac{k}{2} (\log k)^7 L(1, \chi_{-d})^2.
$$ The last inequality follows from the assumptions of Theorem~\ref{thm2}, namely, $k \gg d^{\varepsilon}$, and conditional lower bound for $L(s, \chi_{-d})$ (see, for example,~\cite{Littlewood1928}):
$$
L(1, \chi_{-d}) \gg \frac{1}{\log \log d}.
$$

The measure in the second integral is evaluated in the usual way. Fix the number $E$ so that
$$
E > 40 \max \biggl( 1, \frac{\log d}{\log k} \biggr),
$$ and let $x = k^{2E/V}$. By Lemma~\ref{lem1}, we have
\begin{multline*}
\log \bigl|L_1 (f) L_2 (f) \bigr| \les \sum_{p \les x} \frac{\tilde \lambda_f (p)}{\sqrt{p}} W(p, x) + \\
\sum_{p \les \sqrt{x}} \frac{\lambda_f (p^2) - 1}{p} W(p^2, x) +
c_0 + \frac{3V}{E} \max \biggl( 1, \frac{\log d}{\log k} \biggr).
\end{multline*} Then
\begin{multline*}
\meas \biggl\{ f \in S_k (\Gamma_0 (2)) : \log \bigl|L_1 (f) L_2 (f) \bigr| > V \biggr\} \les \\
\meas \biggl\{ f \in S_k (\Gamma_0 (2)) : \sum_{p \les x} \frac{\tilde \lambda_f (p)}{\sqrt{p}} W(p, x) > \frac{V}{4} \biggr\},
\end{multline*} since $V > 3 \log \log x = 3 (1 - o(1)) \log \log k$, 
$$
\sum_{p \les x} \frac{ \lambda_f (p^2)}{p} < 3 \bigl(1 + o(1) \bigr) \log \log x,
$$ and
$$
\frac{3V}{E} \max \biggl( 1, \frac{\log d}{\log k} \biggr) < \frac{V}{13}.
$$ Combining Markov inequality and~Lemma~\ref{lem4}, we get
\begin{multline*}
\mathop{{\sum}^{h}}_{f \in S_k (\Gamma_0 (2))} \biggl(\frac{V}{4}\biggr)^{-2N} \biggl( \sum_{p\les x} \frac{\tilde \lambda_f (p)}{\sqrt{p}}  \biggr)^{2N} \les \\
2 \frac{k-1}{2\pi^2} \frac{(2N)!}{N!} \exp \biggl( -2N \log \frac{V}{4} + \frac{3}{2} N \log \log \log x  \biggr).
\end{multline*} With the choice $N = \lfloor V / (8E) \rfloor$ the restrictions of~Lemma~\ref{lem4} are satisfied, and the right hand side in the last inequality does not exceed
$$
k L(1, \chi_{-d}) \exp \biggl( -\frac{V}{20E} \log \log V \biggr) \les k L(1, \chi_{-d}) \exp \bigl( -10 V \bigr).
$$ Hence,
$$
I_2 \les k L(1, \chi_{-d}) \int_{3 \log \log k}^{+\infty} e^{2V - 10 V} dV \les k L(1, \chi_{-d}),
$$ which completes the proof.

\section*{Acknowledgments}

I thank Maksym Radziwi{\l\l} for introducing me to this problem and much useful advice. I also thank Alex de Faveri, Peter Zenz, Mayank Pandey, and Liyang Yang for helpful discussions, and the referees for the careful reading of the paper.

\nocite{*}
\bibliographystyle{abbrv}
\bibliography{Linnik_8_bib}

\end{document}